\newcommand{\R}{{\mathbb R}}
\newcommand{\p}{\partial}
\newcommand{\fr}{\frac}
\newcommand{\la}{\langle}
\newcommand{\ra}{\rangle}
\newcommand{\e}{\epsilon}
\newcommand{\ka}{\kappa}
\newcommand{\be}{\begin{equation}}
\newcommand{\ba}{\begin{aligned}}
\newcommand{\bee}{\begin{equation*}}
\newcommand{\ee}{\end{equation}}
\newcommand{\ea}{\end{aligned}}
\newcommand{\eee}{\end{equation*}}
\theoremstyle{plain}
\newtheorem{theorem}{Theorem}[section]
\newtheorem{thm}[theorem]{Theorem}
\newtheorem{corollary}[theorem]{Corollary}
\newtheorem{lemma}[theorem]{Lemma}
\newtheorem{proposition}[theorem]{Proposition}
\newtheorem{prop}[theorem]{Proposition}
\newtheorem{claim}{Claim}[section]
\theoremstyle{remark}
\newtheorem{remark}[theorem]{Remark}
\theoremstyle{definition}
\newtheorem{definition}[theorem]{Definition}
\numberwithin{equation}{section}
\title{Convergence of Curve Shortening Flow to Translating Soliton}
\author{Beomjun Choi}
\address{{\bf Beomjun Choi:} Department of Mathematics, Columbia University, 2990 Broadway, New York, \newline \hphantom \quad\, NY 10027, USA}
\email{bc2491@columbia.edu}
\author{Kyeongsu Choi}
\address{{\bf Kyeongsu Choi:} Department of Mathematics, Massachusetts Institute of Technology, \newline \hphantom \quad\,  77 Massachusetts Ave,  Cambridge,  MA 02139, USA.}
\email{choiks@mit.edu}
\author{Panagiota Daskalopoulos}
\address{{\bf Panagiota Daskalopoulos:} Department of Mathematics, Columbia University, 2990 Broadway, \newline \hphantom \quad\, New York,  NY 10027, USA}
\email{pdaskalo@math.columbia.edu}
\begin{document}

\begin{abstract}
This paper concerns with  the asymptotic behavior  of    complete non-compact convex curves    embedded in $\mathbb{R}^2$ under  the 
$\alpha$-curve shortening flow  for exponents $\alpha >  \frac 12$. We show that  any such curve having in addition  its  two ends asymptotic to two parallel lines, 
converges under  $\alpha$-curve shortening flow  to the unique  translating soliton whose ends are asymptotic to the same parallel lines. This is a new result even in the standard case $\alpha=1$,  and we prove  for all exponents  up to  the critical case $\alpha>\frac{1}{2}$.
\end{abstract}

\maketitle

\section{introduction}

Given a positive constant $\alpha$, we say that   a one-parameter family of immersions $X: N\times [0,T] \mapsto \R^2$ is a convex complete solution of the $\alpha$-curve shortening flow ($\alpha$-CSF in abbreviation) if each image $M_t \coloneqq X(N\times \{t\})$ is a smooth convex complete curve and the following holds 
\be\label{eqn-CSFG}
\fr{\p}{\p t} \, X(p,t)= \bar \ka ^\alpha(p,t) \, n(p,t)
\ee
where $\bar \ka(p,t)$ is the curvature of $M_t$ at $X(p,t)$, and  $n(p,t)$ is the unit normal vector pointing the convex hull of $M_t$.  Throughout the paper, if we need a distinction in the parametrizations of the curvature,   we use $\bar \ka = \bar \ka(p,t)$
for the parametrization as in \eqref{eqn-CSFG} and 
we use $\ka=\ka(\theta,t)$, where $\theta$ denotes the angle between $n(p,t)$ and $e_1$. 
\bigskip

In 1984 \cite{Ga}, Gage showed that the CSF ($\alpha=1$) makes closed convex curves circular. Jointly with Hamilton, he 
established  the improved result \cite{GH} that closed convex solutions of the CSF converge to circles after rescaling. Namely, closed convex solutions converge to \textit{shrinking} solitons.

\bigskip

Regarding  complete non-compact solutions,  Ecker and Huisken \cite{EH1} proved that asymptotically conical $n$-dimensional entire graphs in $\R^{n+1}$ which evolve by the mean curvature flow (a  higher dimensional analogue to the CSF)   converge to \textit{expanding} solitons after rescaling.  

\bigskip

In this paper, we study the convergence of the CSF to \textit{translating} solitons. Our main result states as follows:

\begin{theorem}\label{thm-main} Assume that $M_0$ is a strictly  convex smooth non-compact complete curve embedded in $\R^2$, and that  its two ends are asymptotic to two parallel lines. Then, for given $\alpha>\fr{1}{2}$ the unique strictly  convex complete solution of the $\alpha$-CSF converges, as  $t \to \infty$, locally smoothly to the unique translating soliton of the $\alpha$-CSF which is asymptotic to the two lines.  \end{theorem}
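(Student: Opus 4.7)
The plan is to reduce to the support-function formulation on the normal-angle interval and to compare $M_t$ directly with the unique translating soliton $\Sigma$ with the same asymptotic lines. After a rotation, arrange that the outer normal angle $\theta$ sweeps $I=(-\pi/2,\pi/2)$. The support function $u(\theta,t)$ of $M_t$ is then bounded on $I$ up to the boundary, with fixed Dirichlet data at $\theta=\pm\pi/2$ encoding the two asymptotic lines. In these variables the $\alpha$-CSF reads
\begin{equation*}
u_t=-(u+u_{\theta\theta})^{-\alpha}\qquad\text{on }I\times[0,\infty),
\end{equation*}
while $\Sigma$, translating at speed $c>0$ in the $-e_1$ direction, has support function $U$ determined by $(U+U_{\theta\theta})^{-\alpha}=c\cos\theta$ with the same Dirichlet data. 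Setting $v(\theta,t):=u(\theta,t)-U(\theta)+ct\cos\theta$, one gets $v|_{\partial I}=0$ and, by the mean value theorem applied to $s\mapsto s^{-\alpha}$,
\begin{equation*}
v_t=a(\theta,t)\,\bigl(v+v_{\theta\theta}\bigr),\qquad a(\theta,t)>0.
\end{equation*}

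The key observation is that the first Dirichlet eigenfunction of $-\partial_{\theta\theta}-1$ on $I$ is $\cos\theta$ with eigenvalue $0$, while all remaining Dirichlet eigenvalues are $\ge 3$. In particular the functions $c\cos\theta$ are stationary solutions of the $v$-equation, and introducing the good variable $w:=v/\cos\theta$ yields
\begin{equation*}
w_t=a(\theta,t)\bigl(w_{\theta\theta}-2\tan\theta\,w_\theta\bigr),
\end{equation*}
a drift-diffusion equation \emph{without} zeroth-order term, whose drift is inward-pointing at both endpoints. The standard parabolic maximum principle therefore applies to $w$: $\sup_\theta w(\cdot,t)$ is non-increasing and $\inf_\theta w(\cdot,t)$ is non-decreasing, and both limits exist. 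Assuming the matching rate $v(\cdot,0)=O(\cos\theta)$ at the endpoints (so that $w(\cdot,0)$ is bounded), the sandwich by $c_\pm\cos\theta$ also gives global existence of the flow.

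To upgrade boundedness to convergence, I would show that the oscillation of $w$ decays to $0$ via a strong maximum principle / Harnack-type argument on compact sub-intervals of $I$. Then $w(\theta,t)\to A$ uniformly on compact sets for some $A\in\R$, so $v(\theta,t)\to A\cos\theta$ and $u(\theta,t)\to U(\theta)-ct\cos\theta+A\cos\theta$. Translating back to curves, this is the convergence of $M_t$ to $\Sigma$ up to a shift by $A/c$ in its direction of motion, i.e.\ a time re-parameterization of the same soliton. The would-be $\sin\theta$ mode, which corresponds to a translation of the asymptotic lines perpendicular to the motion, is automatically excluded by the Dirichlet condition $v(\pm\pi/2,t)=0$.

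The hardest step is the boundary behaviour at $\theta=\pm\pi/2$, where $\cos\theta$ vanishes, the drift $\tan\theta$ blows up, and $a(\theta,t)$ degenerates. Running the sandwich and the maximum principle rigorously requires working on an exhaustion of $I$ by sub-intervals $[-\pi/2+\delta,\pi/2-\delta]$, controlling boundary fluxes uniformly in $\delta$, and establishing $v(\cdot,0)=O(\cos\theta)$ through precise two-sided asymptotic expansions of $u_0$ and $U$ at the endpoints. The restriction $\alpha>\tfrac12$ is expected to enter exactly here---it is the regime in which $\Sigma$ exists with the correct asymptotic expansion and in which the boundary-layer contributions vanish in the limit. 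Once $C^0$-convergence on compact subsets of $I$ is secured, interior parabolic Schauder estimates upgrade it automatically to the local smooth convergence of $M_t$ to $\Sigma$ asserted in the theorem.
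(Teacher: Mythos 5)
Your route---linearizing the support-function equation $u_t=-(u_{\theta\theta}+u)^{-\alpha}$ around the translator and running a maximum principle for the ground-state ratio $w=v/\cos\theta$---is genuinely different from the paper, which works with $\kappa^\alpha(\theta,t)$, the monotone quantity $J(t)=\frac{(\alpha+1)^2}{\alpha^2}\int (\kappa^\alpha)_\theta^2-(\kappa^\alpha)^2\,d\theta$, and decay estimates for $\kappa^\alpha$ and its derivatives at $\theta=0,\pi$ that kill the boundary terms in $\partial_t J$. However, as written your outline has two genuine gaps, and they sit exactly where the paper invests its effort.

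First, the boundedness of $w(\cdot,0)$, i.e.\ $u_0-U=O(\cos\theta)$ at the endpoints, is an extra hypothesis, not a consequence of the assumption that the two ends of $M_0$ are asymptotic to the two lines. That assumption only gives $v(\pm\pi/2,0)=0$; the end of $M_0$ may approach its asymptote at a completely different rate than the soliton does, in which case $v/\cos\theta$ is unbounded at $t=0$, the sandwich by $c_\pm\cos\theta$ never starts, and the monotonicity of $\sup w$, $\inf w$ has no starting point. The paper avoids any assumption on the initial rate by showing the flow itself produces universal boundary decay after finite time (Theorem \ref{thm-curvaturedecay}: $\kappa^\alpha(\theta,t)\le C\,\theta^{2/3}$ for $t>3$, with no hypothesis on the initial ends). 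Second, the step ``oscillation of $w$ decays to zero by a Harnack-type argument on compact sub-intervals'' is precisely the hard part and is not supplied: the coefficient $a(\theta,t)\sim\kappa^{\alpha+1}$ degenerates and the drift $\tan\theta$ blows up at $\theta=\pm\pi/2$, and $w$ satisfies no boundary condition there, so interior Harnack controls the oscillation on compact subsets only relative to the global oscillation and does not exclude $\sup w$ and $\inf w$ converging to different limits carried by the degenerate ends. Closing this requires quantitative boundary decay for the solution and its derivatives, together with a positive curvature lower bound on compact angle sets (which you also need for the uniform parabolicity behind your Schauder step); this is exactly where $\alpha>\frac12$ enters in the paper, through the fast-diffusion lower bound of Proposition \ref{prop-lowerbddfast} and the resulting estimates of Theorem \ref{thm-decay} and Corollary \ref{cor-decay}. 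Without analogues of these estimates, your argument does not close; with them, it could plausibly be made to work, but they constitute the main content of the proof rather than a technical afterthought.
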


In the classical  case $\alpha=1$, the translating solitons are the Grim Reaper curves which are homothetic to the curve $\Gamma=\{(x_1,-\log \cos x_1): x_1 \in (-\frac{\pi}{2},\frac{\pi}{2})\}$ up to rotation. Thus, the Grim Reaper curves have two ends asymptotic to two parallel lines. 

On the other hand, by the result in \cite{CDKL} a convex complete graph $M_0$ over an open interval $I \subset \R$ (either bounded or unbounded)  remains as a convex complete graph $M_t$ over $I$ under the CSF for the all time. Therefore, the initial graph $M_0$ must be defined over a bounded interval in order to converge to a Grim Reaper curve. Namely, for the convergence to a Grim Reaper curve it is necessary to assume that the two ends of $M_0$ are asymptotic to two parallel lines.

\bigskip 

However, it was revealed by Calabi in \cite{Ca} that translating solitons to the $\frac{1}{3}$-CSF are the parabola $\Gamma=\{(x_1,x_1^2):x_1 \in \R\}$ up to affine transforms. Namely, a translating soliton to the $\frac{1}{3}$-CSF is not contained in a strip. Therefore, an initial graph $M_0$ must be an entire graph to converge to a parabola. Naturally, the two cases $\alpha=1$ and $\alpha=\frac{1}{3}$ would expect different types of proofs for the convergence to translating solutions. 
 
In this work, we concentrate on the range of exponents  $\alpha >\frac{1}{2}$, due to the result of Urbas \cite{U2} that translating solitons to the $\alpha$-Gauss curvature flow ($\alpha$-GCF) with  $\alpha>\frac{1}{2}$ are contained in cylinders while those with $0<\alpha \leq \frac{1}{2}$ are entire graphs. We recall that the GCF is also a higher dimensional analogue to the CSF.

\bigskip
 
We treat the $\alpha$-CSF with $\alpha<1$ as a fast diffusion type equation and Proposition \ref{prop-lowerbddfast}, the asymptotic property of the ends of $M_t$, follows from this consideration. Then, the condition $\alpha> \frac{1}{2}$ yields a sharp lower bound of curvature decay which is needed to prove convergence of solutions to the translating solitons.

However, we will also derive   upper bounds for the curvature and its derivatives for $\alpha>\frac{1}{3}$ which are  independent from  the shape of the ends of $M_t$.  This $\alpha=\frac{1}{3}=\frac{1}{1+2}$ is also a critical exponent which is due to the fact that in this case the equation is invariant under  affine transformations. By the work \cite{Ca} of Calabi, the shrinkers, expanders, and translators to the $\frac{1}{n+2}$-GCF are ellipsoids, hyperboloids, and paraboloids, respectively. Namely, the $\frac{1}{n+2}$-GCF has infinitely many different solitons, but they all are equivalent up to affine transformations. 

 Recently, Andrews-Guan-Ni \cite{AGN} showed the convergence of closed solutions of the $\alpha$-GCF to shrinking solitons for $\alpha>\frac{1}{n+2}$, and Brendle-Choi-Daskalopoulos \cite{BCD} obtained the uniqueness of closed shrinkers for $\alpha>\frac{1}{n+2}$. In this regard, the upper bounds for the curvature and its derivatives for $\alpha>\frac{1}{3}$ in this paper 
 could  be helpful   in studying the convergence of entire graph solutions to the translating solitons for $\frac{1}{3} <\alpha \leq \frac{1}{2}$.

\bigskip

\begin{remark}[\textit{Local convergence}]\label{remark-localconvergence}
In  Theorem \ref{thm-main}, the term "locally smoothly converges" indicates that, for instance, if the two ends for the initial curve $M_0$ are asymptotic to $\{x_1=-1\}$ and $\{x_1=1\}$ then after translating the solution  as $\{M_t-h(t)\, e_{2}\}$ so that it  contains the origin, it smoothly converges to the soliton on $[-1+\delta, 1-\delta]\times \R$,  for every small $\delta>0$. For more details, see the theorem \ref{thm-goal}.
\end{remark}

\bigskip

\begin{remark}[\textit{Translating solitons of $\alpha>1$ contain flat lines}]
Given $\alpha>1$, the $C^1$ convex translating solitons have two half lines and the solitons are not of $C^{\infty}$ class. See \cite{U2}. For example, given $\alpha>1$ there exists a convex even function $f:[-1,1]\to \mathbb{R}$ such that
\begin{itemize}
\item $f$ is smooth strictly convex on $(-1,1)$, and $|Df|(x_1)\to +\infty$ as $|x_1| \to 1$,
\item $\Gamma\coloneqq \{(\pm 1,x_2):x_2 \geq f(1)\}\cup \{(x_1,f(x_1)):|x_1| <1\}$ is a translating soliton to the $\alpha$-CSF.
\end{itemize} 
In the higher dimensional case of the  GCF,  the evolution of surfaces with flat sides has  been studied as a free-boundary problem which is also motivated from the wearing precess of  stones  \cite{Fi, Ha0, DH2, DL}.  
In particular the works \cite{DH2, DL}, treat the  GCF  as a slow diffusion of  a similar nature as that appearing in the  the Porous medium equation.  Similarly, the $\alpha$-CSF with  $\alpha>1$ sufficiently large is  a slow diffusion equation. This  can be seen from the  evolution equation  the  speed $\kappa^\alpha$ which given in  \eqref{eq-k^a}.  
Thus, in this case too one may consider weakly convex initial data with flat lines and study its evolution. 
However, in this work  we consider only strictly  convex  and  complete initial data  and we  show
that the solution converges to a weakly convex $C^1$ translator with flat lines.

In addition, it was recently discovered in \cite{CDL} that   translating solitons   to the GCF in $\R^3$ have flat sides if their asymptotic cylinders at infinity have flat sides. Namely,  translating solitons to  nonlinear flows may have flat sides, arising from slow  diffusion  at   infinity.
\end{remark}

\bigskip

\noindent\textit{Discussion on the Proof}: The key idea of the  paper is to utilize the monotonicity of the functional

\[\ba J(t) =\fr{(\alpha+1)^2}{\alpha^2}\int (\ka^\alpha)^2_\theta - (\ka^\alpha)^2d \theta.\ea\] 
Such a functional was used in \cite{DHS} for the classification of closed convex ancient solutions to the CSF.
Note that on a  closed convex solution the function  $\kappa$ is $2\pi$-periodic and one  can simply obtain $\p_t J \leq 0$ by  integration by parts. However, in our non-compact case  boundary terms appear after we  integrate by parts (see  Proposition \ref{prop-basic}). Heuristically, we have
\be\label{eq-formal}\ba \p_t J 
%&=\fr{(\alpha+1)^2}{\alpha^2} \int _0^\pi 2(\ka^\alpha)_\theta(\ka^\alpha)_{t\theta} -2(\ka^\alpha)(\ka^\alpha)_td\theta 
%\\&= -\fr{2(\alpha+1)^2}{\alpha^2}\int^{\pi}_0 \fr{\ka_t (\ka^\alpha)_t }{\ka^2} d\theta + \Big (\fr{2(\alpha+1)^2}{\alpha^2} (\ka^\alpha)_\theta (\ka^\alpha)_t \Big )^{\theta=\pi} _{\theta=0}\\
%& = \int_0^\pi \fr{-2(\alpha+1)^2}{\alpha} \fr{\ka_t^2}{\ka^{3-\alpha}}d\theta+ \Big (\fr{2(\alpha+1)^2}{\alpha^2} (\ka^\alpha)_\theta (\ka^\alpha)_t \Big )^{\theta=\pi} _{\theta=0}\\& 
= \fr{2(\alpha+1)^2}{\alpha^2}\Big (-\alpha\int_0^\pi \ka^{\alpha+1} [(\ka^\alpha)_{\theta\theta}- \ka^\alpha]^2d\theta+ \Big ( (\ka^\alpha)_\theta (\ka^\alpha)_t \Big )^{\theta=\pi} _{\theta=0} \Big ).
\ea\ee

\medskip

The most challenging part of our prove is to show that the  boundary terms vanish. For that  it is crucial to   derive   local derivative estimates on  the speed $\ka^\alpha$ in (see Section 3). We then combine these  estimates   which we then combined  with  our H\"older estimate for  $\ka^\alpha$ (see  in Section 2).  Notice that even if the  curvature $\ka(\cdot,0)$ of the initial data does not converge to zero at infinity (i.e. as $\theta \to 0$ or $\theta   \to \pi$), Theorem \ref{thm-curvaturedecay} shows that $\ka^\alpha$ decays in a sufficient H\"older norm at the two boundary points  after some finite time. 

\medskip

The derivative decay estimates in Section 3 are conducted in  Euclidean space by using  an extrinsic cut-off function up to the critical exponent $\alpha >\frac{1}{3}$. Note that the local estimate does not depend on the global structure, asymptotic lines. Hence, the local estimates are naturally obtained up to $\alpha>\frac{1}{3}$. In the critical case $\alpha=\frac{1}{3}$, one  would need to introduce an affine-invariant cut-off function.

\medskip

To apply the derivative estimate with the arclength parameter $s$, we have to use the change of variable $\p_s=\ka \, \p_\theta$. Therefore, we need to derive a lower bound for $\ka$. We do so by considering  the flow as a fast diffusion equation.  Then, for $\alpha>\frac{1}{2}$ we obtain the required lower bound in  Theorem \ref{thm-decay}.

\medskip

In the last section, we show (by utilizing our  estimates in previous sections)  that $J(t)$ converges to zero as time tends to infinity on each compact interval in $(0,\pi)$. Thus, $\ka^\alpha_{\theta\theta}-\ka^\alpha$ converges to zero in $L^2$-sense  (see in Lemma \ref{lemma-main}). We then  conclude the convergence of  $\ka^\alpha(\theta,t)$  to $c\, \sin\theta$ in the  $C^{\infty}_{\text{loc}}$-topology,  for some $c>0$  depending  on  the width of the smallest slab region which encloses our solution.  
This   yields  Theorem \ref{thm-goal}. Finally,  Theorem \ref{thm-goal} combined with  Proposition \ref{prop-basic} 
 implies our main result Theorem \ref{thm-main}.

\section{Preliminaries and Curvature Estimates}\label{sec-pre}

We begin by defining the following  notation. We denote by $\mathcal{N}_t$ the {\em normal image} of $M_t$
at a given instant $t$, namely: 
\be
\mathcal{N}_t :=\{ n\in S^1 : n\; \text{is an inward unit normal vector to} \; M_t\}
\ee
and  denote by $S(n,t): \mathcal{N}_t \to \R$ the {\em support function } 
\be\label{eqn-support}
S(n,t) :=\sup_{X\in M_t} \langle -n, X\rangle.
\ee

In the next Proposition we gather some  basic properties of any solution $M_t$ to the $\alpha$-CSF
which satisfies the assumptions of Theorem \ref{thm-main} and sketch its proof for the reader's convenience. 

\begin{proposition}\label{prop-basic}
Assume that $M_0$ is a strictly convex smooth non-compact complete curve embedded in $\mathbb{R}^2$
such that its  two ends are asymptotic to the two lines $\{x_1=\pm 1\}$,  as $ x_2\to +\infty$. Then, the $\alpha$-CSF {\em($\alpha>0$)} has a unique convex complete solution $M_t$ existing for all time $t \in [0,+\infty)$. Moreover, each $M_t$ is a graph over $(-1,1)$ with  $\mathcal{N}_t=\{ \langle n,e_2\rangle >0\}$.
\end{proposition}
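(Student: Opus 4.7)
The strategy is to construct $M_t$ as a monotone $C^\infty_{\mathrm{loc}}$ limit of closed convex $\alpha$-CSF solutions, and then to transfer the strip containment and the asymptotic behaviour from $M_0$ to $M_t$ via the avoidance principle.

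First I would unpack the hypothesis: strict convexity of $M_0$ together with its two ends being asymptotic to $\{x_1=\pm 1\}$ forces $M_0$ to be the graph $\{(x_1,u_0(x_1)):x_1\in(-1,1)\}$ of a smooth strictly convex function $u_0$ with $u_0(x_1)\to+\infty$ as $|x_1|\to 1$. In particular $M_0\subset(-1,1)\times\R$ and $\mathcal{N}_0=\{\langle n,e_2\rangle>0\}$ hold automatically. For each large height $h>\min u_0$ I would truncate $M_0$ at $\{x_2=h\}$ and smoothly cap the two tips to obtain a strictly convex closed curve $M_0^{(h)}\subset(-1,1)\times\R$ that coincides with $M_0$ on $\{x_2\le h-1\}$.

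Next I would evolve each $M_0^{(h)}$ by $\alpha$-CSF. The classical theory for closed convex curves (Gage--Hamilton \cite{GH} for $\alpha=1$ and its $\alpha$-analogues) yields a unique smooth strictly convex closed solution $M_t^{(h)}$ on a maximal interval $[0,T_h)$, with $T_h\to\infty$ as $h\to\infty$ because the enclosed area diverges. The two parallel lines $\{x_1=\pm 1\}$ are stationary solutions (zero curvature), so the avoidance principle gives $M_t^{(h)}\subset[-1,1]\times\R$ throughout $[0,T_h)$, and a second application of comparison shows that the family $\{M_t^{(h)}\}$ is monotone in $h$. Passing to the limit $h\to\infty$ and invoking $h$-independent interior curvature and higher-derivative estimates (Tso-type speed bounds together with standard parabolic regularity, all localized away from infinity) produces a strictly convex complete solution $M_t$ of \eqref{eqn-CSFG} on $[0,\infty)$ that realises the $C^\infty_{\mathrm{loc}}$ limit of the approximants. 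Uniqueness follows from the same monotone sandwich argument applied to any other convex complete solution with the same initial data, approximated simultaneously from outside and from inside.

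It remains to check the geometric conclusions for $M_t$. The inclusion $M_t\subset[-1,1]\times\R$ passes to the limit from the $M_t^{(h)}$. To see that the ends of $M_t$ remain asymptotic to $\{x_1=\pm 1\}$, I would sandwich $M_t$ between vertical translates $M_0\pm c\,e_2$, which are themselves solutions with the same vertical asymptotes and are ordered with respect to $M_t$ by convexity of $u_0$ and the comparison principle; this prevents the ends of $M_t$ from retreating away from the strip boundary. Being a complete strictly convex curve trapped in $[-1,1]\times\R$ whose two ends escape to $x_2=+\infty$ along the lines $\{x_1=\pm 1\}$, $M_t$ is necessarily a graph of a strictly convex function over $(-1,1)$, and its inward normal sweeps exactly the open upper semicircle $\{\langle n,e_2\rangle>0\}$.

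The main technical obstacle is the limit step: one needs curvature and derivative estimates for the closed approximants $M_t^{(h)}$ that are uniform in $h$ on compact subsets of $(-1,1)\times\R$. These are of the same flavour as the local estimates developed in Sections 2--3 for general complete solutions, but at this preliminary stage it is enough to use the standard interior parabolic theory together with the strip containment.
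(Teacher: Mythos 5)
Your route is genuinely different from the paper's, but it leaves unproved exactly the points the paper outsources. The paper quotes \cite{CDKL} both for all-time existence of complete convex graphical solutions and for the fact that the graphical domain is fixed in time, and then concentrates on (i) showing the solution stays a graph, via the enclosed shrinking circles $\Gamma^h_t$ which force $\mathcal{N}_t\subset\{\langle n,e_2\rangle>0\}$ together with the strong maximum principle, and (ii) uniqueness, via comparison with the rescaled solution $(1-\epsilon)\bar M_{(1-\epsilon)^{-(1+\alpha)}t}$. Your capping-and-limiting scheme attempts to rebuild the existence theory from scratch, and the steps you describe as standard are precisely its hard analytic core: uniform-in-$h$ interior curvature and derivative bounds for closed convex $\alpha$-CSF in a strip do not follow from ``standard interior parabolic theory,'' because the equation is degenerate (for $\alpha>1$) or singular (for $\alpha<1$) where $\kappa$ degenerates; and the claim $T_h\to\infty$ ``because the enclosed area diverges'' is immediate only for $\alpha=1$, where $dA/dt=-2\pi$; for general $\alpha>0$ the loss rate $\int\kappa^\alpha\,ds=\int\kappa^{\alpha-1}\,d\theta$ is not a priori bounded and needs an argument.

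Two steps are wrong as stated. First, the vertical translates $M_0\pm c\,e_2$ are not solutions of \eqref{eqn-CSFG} (a fixed curve is a solution only if $\kappa\equiv 0$); the genuine solutions they generate are $M_t\pm c\,e_2$, and comparison of $M_t$ with its own upward translates is vacuous (an upward translate of a convex graph always lies in its epigraph). So your argument that the ends of $M_t$ remain asymptotic to $\{x_1=\pm1\}$ — equivalently, that the graphical domain does not shrink from $(-1,1)$ — establishes nothing; this is exactly the nontrivial ``domain is fixed'' statement the paper imports from \cite{CDKL}, and without it your final ``hence $M_t$ is a graph over $(-1,1)$ with $\mathcal{N}_t=\{\langle n,e_2\rangle>0\}$'' does not follow. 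Second, the uniqueness argument has a gap: since $M_0$ is unbounded, no closed curve encloses it, so there is no outer approximation by your compact approximants; the inner approximation only gives that your constructed $M_t$ lies in the convex hull of any other solution $\bar M_t$, not the reverse inclusion. Comparing two non-compact complete solutions with identical initial data requires producing strict separation near spatial infinity, which is what the paper's $(1-\epsilon)$-scaling trick achieves by turning $\bar M$ into a solution graphical over the compactly contained interval $(-1+\epsilon,1-\epsilon)$ inside the hull of $M_0$; some substitute for this device is needed in your proof.
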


\begin{proof}
First, by the strict convexity and the completeness of $M_0$, $N_0$ is open in $S^1$ and we easily see that  $M_0$ is a convex graph over $(-1,1)$.
\smallskip

Next, we claim that a complete convex solution $M_t$ (if it exists) remains as a graph for a short time $t\in [0,T]$.   We consider closed circular solutions \[\Gamma_t^h=\{x \in \R^2: (1/2)^{\alpha+1}=|x-(0,h)|^{\alpha+1}+(\alpha+1) t\}\mbox{ for }h \in \R.\] Since the convex hull of $M_0$ contains $\Gamma^h_0$ for $h \gg 1$, the convex hull of $M_t$ contains  $\Gamma^h_t$ for $h \gg 1$. Let $T$ be the singular time of $\Gamma^h_t$. Then, the convex hull of $M_t$ contains $\Gamma^h_t$ for $t\in [0,T]$ for $h \gg 1$. This implies $N_t \cap \{ \la n ,e_2\ra < 0\} = \phi$ for $t\in[0,T]$. $M_t$ is strictly convex by the strong maximum principle and hence again $N_t$ is open in $S^1$. Therefore, $N_t \cap  \{ \la n ,e_2\ra \le 0\} = \phi$. i.e. it is a graph. 
\smallskip 

The all  time existence of complete convex graph solutions is  given in \cite{CDKL}. Moreover, it was also shown in \cite{CDKL} that the domain of every graphical solution is fixed over time. Therefore, each $M_t$ is a convex complete graph over $(-1,1)$. Since $M_t$ is a complete convex graph over $(-1,1)$, it follows that $N_t=\{\la n , e_2 \ra >0\}$. 
\smallskip

Finally, let us sketch the proof of the  uniqueness assertion of the proposition. Let $M_t$ and $\bar M_t$ be  two solutions with the same initial data $M_0=\bar M_0$. 
We may assume that the convex hull of $M_0$ contains the origin.  Consider, for $\epsilon \in (0,1)$ the rescaled solution $\hat{M}_t:=(1-\epsilon)\bar M_{(1-\epsilon)^{-(1+\alpha)}t}$.  Then, each $\hat{M}_t$ is a graph over $(-1+\epsilon ,1-\epsilon)$  and the convex hull of $M_0$ contains  $\hat{M}_0$. Thus, the convex hull of $M_t$ contains $\hat{M}_t$ by the comparison principle. Passing $\epsilon \downarrow 0$, we conclude that the convex hull of $M_t$ contains $\bar M_t$. Similarly, the convex hull of $\bar M_t$ contains $M_t$, yielding that the  solution is unique.
\end{proof}

\begin{lemma}\label{lem-support}
Assume that  $M$ is a  complete graph of a smooth strictly convex function defined on  $(-1,1)$ which implies that for any unit vector $n$ satisfying $\langle n,e_2 \rangle >0$,  there exists a point $X(n)   \in M$ such that $n$ is the inner unit normal at $X(n)\in M$. Then, we have $S(n) :=\sup_{X\in M }\langle -n,X \rangle=\langle -n,X(n)\rangle$ and
 \begin{align*}
 \lim_{n\to \pm e_1} S(n)=1.
 \end{align*}
\begin{proof}
We will  only show that  $\lim_{n\to e_1} S(n)=1$, as the other limit follows similarly. Let  $X(n):=(x_1(n),x_2(n))  $.
If $x_1(n)$  is sufficiently close  to $-1$, we have $x_2(n)> 0$, $0<\langle n,e_1 \rangle <1$. Thus, since   $\langle n,e_2 \rangle >0$ we have  
 $$\limsup_{n\to e_1}S(n)=\limsup_{n\to e_1}\langle  -n,X(n) \rangle =\limsup_{n\to e_1}-x_1(n) \, \langle n,e_1 \rangle -x_2(n) \, \langle n,e_2 \rangle \leq 1.  $$
Now, we assume that there exists a sequence of unit vectors $n_i$  such that $\langle n_i,e_2\rangle > 0$, $\lim_{i\to\infty} n_i=e_1$, and $S(n_i) \leq 1-\epsilon$ for some $\epsilon>0$. We denote by $L_i$ the tangent line to $M$ at $X(n_i)$. We observe that there exists the closed half plane $E_i \subset \mathbb{R}^2$ such that $\p E_i \parallel L_i$, $L_i \subset E_i$, and $-(1-\epsilon) \, n_i \in \p E_i$. Then, we have $M \subset E_i$ and $\overline{\lim_{i\to\infty} E_i} =\{x_1 \geq -1+\epsilon \}$. To be more precise, for every $X'=(x'_1,x'_2)\in \R^2$ with $x'_1<-1+\e$,  $X' \notin E_i $ for large $i$. This contradicts  the condition that $M$ is a graph over $(-1,1)$.
\end{proof}
\end{lemma}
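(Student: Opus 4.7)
The identity $S(n)=\langle -n,X(n)\rangle$ is a one-line consequence of the supporting-hyperplane property: since $n$ is the inner normal at $X(n)$, we have $\langle n,Y-X(n)\rangle\ge 0$ for all $Y\in M$, hence $\langle -n,Y\rangle\le\langle -n,X(n)\rangle$ with equality at $Y=X(n)$, so the supremum is attained and equals $\langle -n,X(n)\rangle$.

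By symmetry it suffices to treat $\lim_{n\to e_1}S(n)=1$. For the upper bound I would use that the Gauss map of a smooth strictly convex complete graph over $(-1,1)$ is a homeomorphism onto $\{\langle n,e_2\rangle>0\}$ which sends the left end of $M$ to $e_1$; hence as $n\to e_1$ the contact points $X(n)=(x_1(n),x_2(n))$ satisfy $x_1(n)\to -1$ and, by completeness of the graph, $x_2(n)\to+\infty$. Writing $S(n)=-n_1 x_1(n)-n_2 x_2(n)$ with $n_1\to 1$, $n_2\to 0^+$ and $x_2(n)>0$ eventually, the first term tends to $1$ while the second is nonpositive, giving $\limsup_{n\to e_1}S(n)\le 1$.

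For the matching lower bound I would argue by contradiction: if some sequence $n_i\to e_1$ satisfied $S(n_i)\le 1-\epsilon$, I would pick a point $Y^\ast=(y_1^\ast,y_2^\ast)\in M$ with $y_1^\ast=-1+\epsilon/2$ (such a point exists because $M$ is a graph over $(-1,1)$). From $M\subset E_i:=\{Y:\langle n_i,Y\rangle\ge -S(n_i)\}$ we get $\langle n_i,Y^\ast\rangle\ge -(1-\epsilon)$, and passing $i\to\infty$ yields $y_1^\ast\ge -1+\epsilon$, contradicting the choice of $Y^\ast$. The statement for $n\to -e_1$ is identical after reflection.

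The main obstacle is really the geometric input that $X(n)$ escapes to the left end as $n\to e_1$. This is essentially the statement that the Gauss map of a smooth strictly convex complete graph over a bounded interval is a homeomorphism onto the open upper semicircle $\{\langle n,e_2\rangle>0\}$ sending the two ends to $\pm e_1$; once this is granted, the rest is routine convex geometry and Hausdorff limits of supporting half-planes.
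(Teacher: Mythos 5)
Your proof is correct and follows essentially the same route as the paper: the upper bound comes from writing $S(n)=\langle -n,X(n)\rangle$ and noting the contact point drifts into the region where $x_2(n)>0$ as $n\to e_1$, and the lower bound comes from the contradiction that $S(n_i)\le 1-\epsilon$ forces $M$ into supporting half-planes that converge to $\{x_1\ge -1+\epsilon\}$, which is incompatible with $M$ being a graph over $(-1,1)$. Your only cosmetic difference is that you test the supporting inequality at one fixed point $Y^\ast$ instead of taking the Hausdorff limit of the half-planes $E_i$ as the paper does, which is a mild streamlining of the same idea; the Gauss-map fact you lean on for the upper bound is used at the same level of informality in the paper (its surjectivity is part of the lemma's hypothesis, and injectivity and the behavior at the ends follow from strict convexity and completeness).
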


After  scaling and rotating our initial  data $M_0$,  Proposition \ref{prop-basic} implies that we only need to prove the following result instead of Theorem \ref{thm-main}. 

\begin{theorem}[Local convergence to solitons]\label{thm-goal}
Let $M_0$ be  a strictly convex smooth non-compact complete curve embedded in $\mathbb{R}^2$
such that its  two ends are asymptotic to the two  lines $\{x_1=\pm 1\}$,  as $ x_2\to +\infty$.  For  any
$\alpha >1/2$,  let 
$M_t$, $t \in (0,+\infty)$  be  the unique solution of the $\alpha$-CSF  with the initial data $M_0$
and denote by  $f:(-1,1) \times [0,+\infty)\to \R$  the graphical parametrization of $M_t$. 

Then, the gradient $f_x(x,t)$ converges to $f_\alpha'(x)$ in $C^\infty_{\text{loc}}[(-1,1)]$ as $t\to +\infty$, where the graph of the function $ f_\alpha(x)=\int_0^x  f_\alpha'(s)\, ds $ is the translating soliton to the $\alpha$-CSF moving in $e_2$ direction whose two ends are asymptotic to $\{x_1=\pm1\}$.
\end{theorem}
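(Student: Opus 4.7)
The plan is to prove Theorem~\ref{thm-goal} by analyzing the speed $u := \ka^\alpha(\theta,t)$ in the angle parametrization $\theta \in (0,\pi)$, which by Proposition~\ref{prop-basic} is the fixed normal image of every $M_t$. In this parametrization the translating solitons of the $\alpha$-CSF moving in the $e_2$ direction with speed $c > 0$ correspond to the profile $u = c\sin\theta$, with $c$ uniquely determined by the slab width of $\{x_1 = \pm 1\}$. The goal is to prove $\ka^\alpha(\cdot,t) \to c\sin\theta$ in $C^\infty_{\text{loc}}((0,\pi))$ and then translate this back to $C^\infty_{\text{loc}}$-convergence of $f_x$ on $(-1,1)$.

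The first and hardest step is to rigorously justify the dissipation identity \eqref{eq-formal}. In the closed case treated in \cite{DHS} periodicity eliminates the boundary contributions, but here one must show that $\bigl[(\ka^\alpha)_\theta\,(\ka^\alpha)_t\bigr]_0^\pi$ vanishes in order to obtain $\p_t J \leq 0$. To achieve this I would combine (i) the H\"older decay of $\ka^\alpha$ near $\theta = 0, \pi$ after a finite time provided by Theorem~\ref{thm-curvaturedecay}; (ii) the extrinsic local derivative estimates on $\ka^\alpha$ from Section~3, which hold for $\alpha > \fr{1}{3}$; and (iii) the curvature lower bound of Theorem~\ref{thm-decay}, which requires $\alpha > \fr{1}{2}$ and licenses the change of variables $\p_s = \ka\,\p_\theta$ needed to transfer extrinsic arclength bounds into bounds on $\theta$-derivatives. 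Together with an upper curvature bound and the support function bound of Lemma~\ref{lem-support}, this also yields that $J$ is bounded below.

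Once $J(t)$ is monotone nonincreasing and bounded below, integrating the dissipation in time gives
\[\int_0^\infty \int_0^\pi \ka^{\alpha+1}\bigl[(\ka^\alpha)_{\theta\theta} - \ka^\alpha\bigr]^2\,d\theta\,dt < \infty.\]
Combined with the uniform positive lower bound on $\ka$ on each compact subinterval $[\delta,\pi-\delta]$ and a bootstrap through the Section~3 estimates, this extracts a sequence $t_j \to \infty$ along which $(\ka^\alpha)_{\theta\theta} - \ka^\alpha \to 0$ in $C^\infty_{\text{loc}}((0,\pi))$. Any $C^\infty_{\text{loc}}$ subsequential limit $u_\infty$ is a nonnegative solution of the linear soliton ODE on $(0,\pi)$, and by positivity together with the constraint that the limit lies in the slab $\{|x_1| < 1\}$, this forces $u_\infty(\theta) = c\sin\theta$ for a unique $c > 0$. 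Hence the full limit $\ka^\alpha(\cdot,t) \to c\sin\theta$ in $C^\infty_{\text{loc}}$ exists. Finally, since $\ka^\alpha$ determines the curve up to vertical translation via the support function identity $S_{\theta\theta} + S = 1/\ka$, this convergence transfers, through the smooth diffeomorphism relating $\theta$ to the graph slope, into $C^\infty_{\text{loc}}((-1,1))$-convergence of $f_x(\cdot,t) \to f_\alpha'$, after subtracting the vertical shift $h(t)\,e_2$ of Remark~\ref{remark-localconvergence}.

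The main obstacle is the boundary-term analysis in the first step. Unlike the closed case of \cite{DHS}, there is no periodicity to automatically eliminate the boundary terms in $\p_t J$; their vanishing requires a sharp matching between the fast-diffusion H\"older decay of $\ka^\alpha$ near the asymptotic lines and the extrinsic derivative estimates, and it is precisely this matching, which passes through the curvature lower bound, that explains the threshold $\alpha > \fr{1}{2}$.
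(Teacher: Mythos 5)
Your skeleton (entropy dissipation with boundary-flux control from the Sections 2--3 estimates, subsequential limits solving the soliton ODE, then transfer back to the graph) is the paper's skeleton, but there are two genuine gaps. First, your central step --- ``$J$ is monotone and bounded below, hence the total dissipation over $(0,\pi)\times(0,\infty)$ is finite'' --- also requires $J(t_1)<+\infty$ at some starting time, i.e.\ $\int_0^\pi (\kappa^\alpha)_\theta^2\,d\theta<\infty$. The paper can verify this only for $\alpha\ge 1$ (Lemma \ref{lem-entropybdd}, whose proof needs $\int_0^\pi \kappa^{\alpha-1}\,d\theta$ bounded, which is automatic for $\alpha\ge1$ but not a priori for $\alpha<1$ since $\kappa\to0$ at $\theta=0,\pi$); for $\alpha\in(\frac12,1)$ the paper explicitly states it cannot show the global entropy is finite and never integrates the dissipation over all of $(0,\pi)$. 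Instead it works with the truncated functional $J^{\bar\delta}$ on $[\bar\delta,\pi-\bar\delta]$, controls the flux terms at $\theta=\bar\delta,\pi-\bar\delta$ via Corollary \ref{cor-decay} and Theorem \ref{thm-curvaturedecay}, and proves the quantitative smallness $|J^{\bar\delta}(t)|\le \epsilon$ for $\bar\delta$ small and $t$ large (Claim \ref{claim-4}), the lower bound coming from the sharp one-dimensional Poincar\'e inequality applied to $\kappa^\alpha$ corrected by a linear function; this yields Lemma \ref{lemma-main} without ever knowing the entropy on $(0,\pi)$ is finite. As written, your argument covers only $\alpha\ge1$, not the full range $\alpha>\frac12$ claimed in the theorem.

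Second, the identification of the constant in the limit $c\sin\theta$ is incomplete. Positivity and the boundary decay do force every subsequential limit to be $c\sin\theta$ for some $c>0$, but ``the limit lies in the slab'' pins down $c$ only in one direction: if $c<m(\alpha)$, then by \eqref{eq-curveintheta} the limiting width $\int \sin\theta/\kappa\,d\theta$ exceeds $2$, contradicting that $M_t$ is a graph over $(-1,1)$; however $c>m(\alpha)$ produces a \emph{narrower} curve, which is perfectly compatible with sitting strictly inside the slab (the ends of $M_t$ remain asymptotic to $\{x_1=\pm1\}$, but the shoulders could a priori escape to infinity while the local limit is a thinner translator). The paper excludes this by a separate argument: the Harnack estimate of Proposition \ref{prop-harnack} propagates a too-large tip speed forward in time, while comparison with a translating soliton of intermediate speed placed above $M_0$ bounds the growth of the tip height $x_2(\pi/2,t)$, giving a contradiction. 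Without this (or a substitute) different subsequential limits could have different constants, and you do not obtain full-time convergence. A minor point: the dissipation density is $\kappa^{\alpha+1}[(\kappa^\alpha)_{\theta\theta}+\kappa^\alpha]^2$ (matching the soliton equation $(\kappa^\alpha)_{\theta\theta}+\kappa^\alpha=0$); the minus sign you copied is a typo in the paper's introduction.
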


\subsection{Parametrization of a convex curve by its normal vector}

Let $M\subset \R^2$ be a strictly convex $C^2$ curve which is the  boundary of a convex body $\hat M \subset \mathbb{R}^2$.  We denote by $n$  the normal vector at $X=(x_1,x_2)\in M$ and $\theta \in [0,2\pi)$ the angle between $n$ and $e_1$.  This  parametrization was used in Gage-Hamilton \cite{GH}. Note that a  {\em convex}  curve is completely determined by the curvature function parametrized by $\theta$, namely $\kappa(\theta)$, up to a translation. 

Recall the well known facts that the  arc-length parameter $s$ satisfies  ${\displaystyle \ka = \fr {\p \theta}{\p s}}$, thus
\[\fr{\p X}{\p \theta} = \fr{\p X}{\p s}\fr{\p s}{\p \theta}= \fr{1}{\kappa}\fr{\p X}{\p s}   \]  and \[\fr{\p X}{\p s}  = (\cos(\theta-\fr{\pi}{2}) , \sin (\theta - \fr{\pi}{2} )=(\sin\theta, -\cos\theta)\] yielding  \be \label{eq-curveintheta}X(\theta_1)-X(\theta_0) = \left(\int_{\theta_0}^{\theta_1} \fr{\sin\theta}{\ka(\theta)} d\theta , \int_{\theta_0}^{\theta_1}- \fr{\cos\theta}{\ka(\theta)} d\theta \right).\ee

As mentioned earlier, J. Urbas \cite{U2} showed that  for exponents $\alpha>1/2$, all the translators of the $\alpha$-GCF (which includes  the $n=1$ case of  the $\alpha$-CSF) are enclosed inside  a cylinder.  Moreover, $M_0$ is a translating soliton 
of the $\alpha$-CSF moving in $e_2$ direction with the speed $c>0$  if and only if  $\ka^\alpha = \la \, n, c \, e_2 \ra = c\, \sin\theta$. Let us observe next that this fact  and \eqref{eq-curveintheta} give a short proof of Urbas's result when $n=1$. 

\begin{proposition}\label{prop-translator}
For $\alpha >1/2$, there exists a strictly convex function $f_\alpha:(-1,1)\to \mathbb{R}$ such that $\displaystyle \lim_{|x|\to 1} |f_\alpha'(x)|=+\infty$ and the graph of $f_\alpha$ is a translating soliton to the $\alpha$-CSF.  $f_\alpha$ is unique up to  addition by  a constant. Moreover,  
$$\lim_{|x|\to 1} |f_\alpha (x)|=+\infty, \quad \mbox{if}\,\, \alpha \in (\frac{1}{2},1]
\qquad \mbox{and} \qquad  \lim_{|x|\to 1} |f_\alpha (x)|=C < +\infty, \quad \mbox{if}\,\, \alpha > 1.$$ For $\alpha \in (0, 1/2]$, translating solitons are entire graphs on $\R$. 
\end{proposition}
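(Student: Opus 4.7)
The plan is to exploit the characterization noted just above the proposition: a graph translating vertically with speed $c>0$ is an $\alpha$-CSF soliton if and only if the curvature, parametrized by the inner normal angle, satisfies $\kappa^\alpha=c\sin\theta$ for $\theta\in(0,\pi)$. Inserting the explicit formula $\kappa(\theta)=(c\sin\theta)^{1/\alpha}$ into the integral representation \eqref{eq-curveintheta} reduces everything to an analysis of two Beta-type integrals, and the whole proposition drops out of their convergence/divergence as $\theta\to 0^+,\pi^-$.

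First I would recover the soliton as a curve. From \eqref{eq-curveintheta} the horizontal extent is
\[
x_1(\pi)-x_1(0)=\int_0^\pi\frac{\sin\theta}{\kappa(\theta)}\,d\theta = c^{-1/\alpha}\int_0^\pi(\sin\theta)^{1-1/\alpha}\,d\theta.
\]
Since $(\sin\theta)^{1-1/\alpha}\sim\theta^{1-1/\alpha}$ near $0$, this integral is finite exactly when $1-1/\alpha>-1$, i.e.\ $\alpha>1/2$. When $\alpha>1/2$, the map $c\mapsto \text{width}(c)$ is continuous and strictly decreasing from $+\infty$ to $0$, so there is a unique $c=c(\alpha)>0$ making the width equal to $2$; after a horizontal translation the soliton lies over $(-1,1)$. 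For $\alpha\leq 1/2$ the integral diverges, which forces $x_1$ to range over all of $\mathbb R$, so the translator is an entire graph. Strict convexity is immediate from $\kappa>0$ on $(0,\pi)$, and since $dx_1/d\theta=\sin\theta/\kappa>0$, the map $\theta\mapsto x_1$ is a diffeomorphism of $(0,\pi)$ onto $(-1,1)$; the tangent slope $-\cos\theta/\sin\theta=-\cot\theta$ therefore gives $f_\alpha'(x)=-\cot\theta(x)\to \mp\infty$ as $x\to\mp 1$, as required.

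Next, for the height dichotomy between $\alpha\in(1/2,1]$ and $\alpha>1$, I use the second component of \eqref{eq-curveintheta}:
\[
x_2(\theta)-x_2(\pi/2) = -c^{-1/\alpha}\int_{\pi/2}^{\theta}\cos\phi\,(\sin\phi)^{-1/\alpha}\,d\phi.
\]
Near $\theta=0$ the integrand behaves like $\phi^{-1/\alpha}$, which is integrable iff $1/\alpha<1$, i.e.\ $\alpha>1$; by symmetry the same holds near $\theta=\pi$. Hence for $\alpha>1$ the height stays bounded and $|f_\alpha(x)|\to C<\infty$ at the ends, while for $\alpha\in(1/2,1]$ the integral diverges (logarithmically when $\alpha=1$, as a power otherwise) and $|f_\alpha(x)|\to+\infty$ as $|x|\to 1$. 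Uniqueness up to an additive constant is automatic: the soliton equation fixes $\kappa(\theta)$ once $c$ is chosen, the width condition pins down $c$, and \eqref{eq-curveintheta} determines the curve up to a single translation, which corresponds precisely to an additive constant in $f_\alpha$.

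I do not expect a genuine obstacle here, since the whole argument is an integrability computation. The only point that needs a brief verification rather than an inspection is the continuity and strict monotonicity (together with the limits $0$ and $+\infty$) of the width map $c\mapsto c^{-1/\alpha}\int_0^\pi(\sin\theta)^{1-1/\alpha}d\theta$ used to choose $c(\alpha)$; this is direct since only the prefactor depends on $c$.
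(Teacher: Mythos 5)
Your proposal is correct and follows essentially the same route as the paper: both use the characterization $\kappa^\alpha=c\sin\theta$ for a vertical translator, plug it into \eqref{eq-curveintheta}, fix the speed by the width normalization (the paper's explicit constant $m(\alpha)$ in \eqref{eq-speed} is exactly your $c(\alpha)$), and read off the gradient blow-up, the height dichotomy at $\alpha=1$, and the entire-graph statement for $\alpha\le 1/2$ from the convergence or divergence of the same two Beta-type integrals. Your write-up just makes explicit the integrability checks that the paper summarizes as "can be checked directly from $x^i_\alpha$."
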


\begin{proof}
Given $\alpha>\frac{1}{2}$, we define the  positive finite constant $m(\alpha)$ by
\begin{align}\label{eq-speed}
m(\alpha):=\Big (\int^{\pi}_0 \fr{\sin y}{\sin ^{1/\alpha} y} dy\Big )^\alpha.
\end{align}
If we fix a point $X_\alpha(\fr{\pi}{2})=(x^1_\alpha,x^2_{\alpha})(\fr \pi 2)= (0,0)$, the equation $\ka^\alpha(\theta) = m(\alpha)\sin\theta$ defines a translating soliton of the $\alpha$-CSF by \eqref{eq-curveintheta}. Namely, $x^i_\alpha :(0,\pi)\to \mathbb{R}$ for $i=1,2$ by 
\bee
 x^1_\alpha(\theta)=m(\alpha)^{-\frac{1}{\alpha}}\int^{\theta}_{\pi/2}(\sin y)^{1-\frac{1}{\alpha}} dy,\qquad  x^2_\alpha(\theta)=-m(\alpha)^{-\frac{1}{\alpha}}\int^{\theta}_{\pi/2} (\sin y)^{-\frac{1}{\alpha}}  \cos y \, dy.
\eee
Note that we have $x^2_\alpha \geq 0$, $x^1_\alpha \in (-1,1)$, $\displaystyle\lim_{\theta\to 0}x^1_\alpha(\theta)=-1$, $\displaystyle \lim_{\theta\to \pi}x^1_\alpha(\theta)=1$. The graph of $(x^1_\alpha, x^2_\alpha)$ could be written as a graph of a function $f_\alpha$ on $(-1,1)$. All the other properties of $f_\alpha$ can be checked directly from $x^i_\alpha$. Note the the speed $m(\alpha)$ is fixed, as we have fixed the size of the interval $I:=(-1,1)$ over which our translator $f_\alpha$ is defined. For $\alpha\in(0,1/2]$, $m(\alpha)=\infty$ implies every soliton has to be an entire graph. 
\end{proof}

\subsection{Evolution equations}

We first recall  well-known equations for the  normal vector $n(p,t)$, the speed
$\bar \kappa^\alpha(p,t)$ and the extrinsic distance   $|X(p,t)|$, where all are considered with respect to the 
geometric parametrization which defines the flow in \eqref{eqn-CSFG}, in particular $\p_s$ and $\p_{ss}$ denote as usual 
the first and second order derivatives with respect to arc-length parameter $s$. The base point of this arc-length could be any point, but we choose an orientation of this parameter $s$ in such a way that $\fr{\p\theta}{\p s} = \ka$. 

\smallskip

\noindent {\em Evolution  of the normal:} 
\be{\displaystyle  \p_t n = -\nabla \bar\ka^\alpha = -\alpha  \, \ka^{\alpha-1}\bar\ka_s}\,\p_s \quad\text{ or equivalently}\quad\p_t\theta=\alpha  \, \ka^{\alpha-1}\bar\ka_s .\ee

\noindent {\em Evolution  of the speed $\bar \kappa^\alpha$:}  \be\label{eqn-speed} {\displaystyle (\p_t - \alpha \bar \ka^{\alpha-1} \p_{ss} )  \bar\ka^\alpha =  \alpha  \,\bar  \ka^{2\alpha+1} }.\ee

\noindent {\em Evolution  of the curvature  $\bar \kappa$:} 
\be\label{eqn-cur}
\bar \ka_t =  \p_{ss}  \bar\ka^{\alpha}  + \bar\ka^{\alpha+2}.
\ee

\noindent {\em Evolution  of the extrinsic distance:} 
 \be\label{eqn-X2}  (\p_t - \alpha \bar \ka^{\alpha-1} \p_{ss} )|X|^2 = 2\, \alpha \bar\ka^{\alpha-1}(-1+(\alpha^{-1}-1) \la X,n \ra \, \bar\ka). \ee

\smallskip
Next, we will  compute the evolution of the derivatives of the speed  $\kappa^\alpha$ 
by differentiating equation \eqref{eqn-speed}.  Before this, let us note  that the parameter $s$ is not a fixed coordinate and changes with respect to time. In fact, 
$$\fr{\p}{\p s} =\fr{1}{\sqrt{g_{11}}}\fr{\p}{\p x_1}, \qquad   \fr{\p^2}{\p t\p s} =\fr{\p^2}{\p s\p t}- \fr{\p_t \, g_{11}}{2 g_{11}\sqrt{g_{11}}}
 \, \fr{\p}{\p x_1}$$
and hence   the commutator satisfies 
\be \label{eqn-comm} \fr{\p^2}{\p t\p s}=\fr{\p^2}{\p s\p t}+\bar\ka^{\alpha+1} \fr{\p}{\p s}.
\ee
To simplify the notation we set   $u:=\bar\ka^\alpha$, and express  equation   equation \eqref{eqn-speed}
as 
\begin{align}\label{eq-u} 
 u_t = \alpha \, u^{1-\frac{1}{\alpha}}u_{ss}+ \alpha u^{2+\frac{1}{\alpha}}=\alpha u^{1-\fr{1}{\alpha}}\,  \big (u_{ss}+u^{1+\fr{2}{\alpha}} \big )
\end{align}
Differentiating \eqref{eq-u} while using the  commutator identity \eqref{eqn-comm}, we obtain the following evolution equations for  the higher order derivatives of $u$:
\begin{align}\label{eq-us}
\p_t u_s =\p_s u_{t}+u^{1+\fr{1}{\alpha}}u_s =\alpha u^{1-\frac{1}{\alpha}}\p^2_{ss}u_s+(\alpha-1)u^{-\frac{1}{\alpha}}u_s u_{ss}+ 2 (\alpha+1)\, u_su^{1+\frac{1}{\alpha}} 
\end{align} and 
\be\ba\label{eq-uss} \p_t u_{ss}&=\p_t\p_s u_s=\p_s\p_t u_{s} +u^{1+\fr{1}{\alpha}}u_{ss}\\
&=\alpha u^{1-\fr{1}{\alpha}}(u_{ss})_{ss} +2(\alpha-1) u^{-\fr{1}{\alpha}}u_su_{sss} + (\alpha-1)u^{-\fr{1}{\alpha}} u_{ss}^2+(\fr{1}{\alpha}-1)u^{-1-\fr{1}{\alpha}} u_s^2 u_{ss}\\
&\quad \quad+2(\alpha+1)(1+\fr{1}{\alpha}) u^{\fr{1}{\alpha}}u_s^2+(2\alpha+3) u^{1+\fr{1}{\alpha}}u_{ss}. \ea\ee

For a smooth strictly convex solution, $\theta(p,t)$ is a smooth invertible function. Thus,  for a fixed $\theta'$ in the image of $\theta(p,t)$ for a time interval $t\in I$, we may define a curve $\gamma_{\theta'} (t)$ for $t\in I$ so that $\theta(\gamma_{\theta'}(t), t) =\theta'$.
Let us parametrize the curvature $\bar \ka$ by $(\theta,t)$ as follows

\[  \ka (\theta,t)=\bar\ka(\gamma_\theta(t),t). \] 

We will often abuse the  notation and continue to use $\ka(n,t)=\ka(\theta,t)$,  for $n=(\cos\theta,\sin\theta)$. Let us next  derive the  evolution equation of $\ka(\theta,t)$.  Note that   \[\p_t \ka = \p_t \bar\ka + \p_s \bar\ka \, \dot \gamma_\theta,  \qquad\text{where }\dot \gamma_\theta=\fr{\p}{\p  t} (s( \gamma_\theta(t))).\]
On the other hand,  since $\theta(\gamma_{\theta'}(t),t)$ is   constant in $t$ we have  \[0=\fr{d}{dt} \theta (\gamma_{\theta'} (t),t) =\bar\ka(\gamma_{\theta'}(t),t) \, \dot \gamma_{\theta'} +\p_s \bar\ka^{\alpha}  (\gamma_{\theta'}(t),t) \quad \mbox{thus} \quad \dot \gamma_\theta = -\alpha \bar\ka_s\bar\ka^{\alpha-2}.\] Hence   \be \label{eq-kappas}\p_t \ka= \p_t\bar\ka -\alpha \bar\ka^{\alpha-2}\bar\ka_s^2\ee 
and use $\p_s =  \ka \, \p_\theta$ to conclude that 
\[\ba\p_t  \ka &= ( \ka^\alpha)_{ss} + \ka^{\alpha+2} -\alpha  \ka^{\alpha-2} \ka_s^2 \\
&= \alpha \ka^{\alpha+1}   \ka_{\theta\theta} + \alpha(\alpha-1)\ka^{\alpha}\ka_\theta^2+\ka^{\alpha+2}\\
&=\ka^2 \big ((\ka^{\alpha})_{\theta\theta} + \ka^\alpha \big )\ea \]
which also implies the equation 
\begin{align}\label{eq-k^a}
\p_t (\ka^\alpha)=\alpha(\ka^\alpha)^{1+\frac{1}{\alpha}} ((\ka^{\alpha})_{\theta\theta} + \ka^\alpha).
\end{align}
The derivation of equation \eqref{eq-k^a} is well known, however we included  it here for the reader's convenience. Sometimes, it is useful to define $p:= \ka^{\alpha+1}$ which we call the  {\em pressure function} following  the terminology of the porous medium and fast-diffusion equations.  The  evolution of $p(\theta,t)$ is given by  \be\label{eq-pressure}\p_t p = \alpha \, p p_{\theta\theta} - \fr{\alpha}{\alpha+1}p^2_\theta +(\alpha+1) \, p^2 .\ee

\subsection{Harnack Estimates}

We need a following pointwise Harnack estimate in $(\theta,t)$ variables  derived from Li-Yau-Hamilton differential Harnack estimate which appears in \cite{Ha} and \cite{C} for the mean curvature flow and the $\alpha$-Gauss curvature flow, respectively. 
\begin{prop}[Harnack Estimate] \label{prop-harnack} Let $M_t$ be a smoothly strictly convex solution of the $\alpha$-CSF.  Then, the curvature $\kappa(\theta,t)$ satisfies   \[\ka_t\ge   \fr{1}{\alpha+1} \fr{\ka}{t}\] 
implying for $0<t_1<t_2$ the inequality  \[\ka(\theta,t_2) \ge \Big (\fr{t_1}{t_2}\Big )^{\fr{1}{\alpha+1}} \ka(\theta,t_1).\]
\begin{proof}From 478 page in \cite{C}, for $\bar\ka (p,t)$,  \[\alpha \fr{\bar\ka_t}{\bar\ka} - \alpha^2\fr{\bar\ka_s^2}{\bar\ka^2}\fr{\bar\ka^\alpha}{\bar \ka} \ge -\fr{1}{1+\alpha^{-1}}\fr{1}{t}.\] Since \[\p_t \bar \ka= \p_t\bar\ka -\alpha \bar\ka^{\alpha-2}\bar\ka_s^2\] this directly implies the proposition.
\end{proof}
\end{prop}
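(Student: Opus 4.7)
The plan is to derive the proposition as a direct consequence of Chow's Li--Yau--Hamilton type differential Harnack inequality for the $\alpha$-Gauss curvature flow, specialized to the curve case $n=1$, combined with the chain-rule identity \eqref{eq-kappas} already established above.

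First I would recall Chow's estimate in its Eulerian, arc-length form:
\[\alpha \frac{\bar\kappa_t}{\bar\kappa} - \alpha^2 \frac{\bar\kappa_s^2}{\bar\kappa^2}\bar\kappa^{\alpha-1} \geq -\frac{1}{1+\alpha^{-1}}\cdot \frac{1}{t}.\]
Multiplying through by $\bar\kappa/\alpha$ rewrites this as
\[\bar\kappa_t - \alpha \bar\kappa^{\alpha-2}\bar\kappa_s^2 \geq -\frac{1}{\alpha+1}\cdot \frac{\bar\kappa}{t}.\]
Next I would invoke \eqref{eq-kappas}, which says that along the trajectory $\gamma_\theta(t)$ of the point with prescribed inward normal angle one has $\partial_t\kappa(\theta,t) = \partial_t\bar\kappa - \alpha\bar\kappa^{\alpha-2}\bar\kappa_s^2$. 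The left-hand side of the rewritten Chow inequality is therefore precisely $\partial_t\kappa(\theta,t)$, yielding the pointwise differential Harnack for $\kappa(\theta,t)$ in the $(\theta,t)$-parametrization. Integrating the resulting ODE inequality $\partial_t(\log\kappa)\geq -1/((\alpha+1)t)$ from $t_1$ to $t_2$ then produces the claimed Harnack $\kappa(\theta,t_2)\geq (t_1/t_2)^{1/(\alpha+1)}\kappa(\theta,t_1)$.

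There is no obstacle of substance once Chow's inequality is granted. The one conceptual observation that drives the proof is that the gradient-correction term $-\alpha\bar\kappa^{\alpha-2}\bar\kappa_s^2$ appearing on the left of Chow's inequality is \emph{exactly} the correction produced when one switches from differentiating the curvature at a fixed arc-length point to differentiating at a fixed normal angle; equivalently, the $(\theta,t)$-parametrization is the one in which the Harnack assumes its sharpest and simplest form, becoming a clean one-sided lower bound on $\partial_t(\log\kappa)$. The only mildly technical caveat is that Chow originally proves his estimate for closed convex hypersurfaces, so for the non-compact $M_t$ considered here one should justify it either by a standard localization argument for differential Harnack inequalities, or by approximating $M_t$ from inside by a compact strictly convex exhaustion and passing to the limit in the pointwise inequality---both routes are routine given the strict convexity and smoothness of $M_t$.
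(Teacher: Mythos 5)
Your proposal follows essentially the same route as the paper's own proof: both take Chow's differential Harnack inequality from p.~478 of \cite{C}, multiply by $\bar\kappa/\alpha$, and use the reparametrization identity \eqref{eq-kappas} to recognize the left-hand side as $\partial_t\kappa(\theta,t)$, then integrate the resulting bound on $\partial_t\log\kappa$ to get the stated Harnack inequality. Your derivation in fact makes explicit two points the paper glosses over --- that the displayed differential inequality should read $\ka_t\ge -\fr{1}{\alpha+1}\fr{\ka}{t}$ (the integrated conclusion matches the negative sign, as your computation shows), and that Chow's estimate, proved for closed hypersurfaces, needs an approximation or localization remark in the present non-compact setting --- so the proposal is correct and, if anything, slightly more careful.
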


%\begin{remark}
%Actually it seems like this could be generalized into higher dimensional GCF and we get \[\ka(\theta,t_2) \ge \Big (\fr{t_1}{t_2}\Big )^{\fr{n}{n\alpha+1}} \ka(\theta,t_1)\quad\mbox{for }\theta\in S^n.\]
%\end{remark}

\subsection{Curvature Upper and Lower bounds}
The goal of this section is to prove Proposition \ref{prop-curvbdd}, which gives global upper bounds on the speed 
$\kappa^\alpha$ for $t >0$ and local (in $\theta$) lower bounds on the speed $\kappa^\alpha$ for  large times. 
We first show a simple lemma which says that the support functions of convex surfaces are ordered if one surface contains the other. 
\begin{lemma}\label{lem-geom}
Suppose $M_1$ and $M_2$ are convex hypersurfaces in $\R^{n+1}$ and the convex hull of $M_1$ contains $M_2$. Then, their support functions $S_i(n)=\sup_{x \in M_i} \langle -n,x\rangle$ satisfy $S_1(n) \geq S_2(n)$. 
\begin{proof}
We denote by $E_i \subset \mathbb{R}^3$ the convex hull of $M_i$. Then, we have $\displaystyle S_0=\sup_{x \in M_i} \langle -n,x\rangle=\sup_{x \in E_i} \langle -n,x\rangle$  by the convexity. Hence, $E_2 \subset E_1$ implies the desired result.
\end{proof}
\end{lemma}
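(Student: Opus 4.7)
The plan is to reduce the statement to the elementary fact that a linear functional attains the same supremum on a set as on its convex hull, and then invoke the containment hypothesis directly. Concretely, I would denote by $E_i \subset \mathbb{R}^{n+1}$ the convex hull of $M_i$, so that the hypothesis of the lemma reads $M_2 \subset E_1$.

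First I would establish the identity
\[
S_i(n) \;=\; \sup_{x \in M_i} \langle -n, x\rangle \;=\; \sup_{x \in E_i} \langle -n, x\rangle.
\]
The inequality ``$\leq$'' is immediate since $M_i \subset E_i$. For the reverse inequality, any point $x \in E_i$ may be written as a finite convex combination $x = \sum_j \lambda_j y_j$ with $y_j \in M_i$ (by Carathéodory, but in fact only the linearity of $\langle -n,\cdot\rangle$ is needed), and linearity gives
\[
\langle -n, x\rangle \;=\; \sum_j \lambda_j \langle -n, y_j\rangle \;\leq\; \sup_{y \in M_i} \langle -n, y\rangle,
\]
so taking sup on the left yields the claim.

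Once this alternative description of the support function is in place, the lemma is immediate: since $M_2 \subset E_1$, monotonicity of the supremum gives
\[
S_2(n) \;=\; \sup_{x \in M_2} \langle -n, x\rangle \;\leq\; \sup_{x \in E_1} \langle -n, x\rangle \;=\; S_1(n).
\]
There is no real obstacle here; the only minor subtlety is to remember to pass to the convex hull before applying containment, because the hypothesis is stated in terms of the convex hull of $M_1$ rather than $M_1$ itself. (If $M_i$ were the boundary of a convex body one could argue more directly via the hyperplane picture: $S_i(n)$ is the signed distance from the origin to the supporting hyperplane of $E_i$ with outer normal $-n$, and a larger body has a farther supporting hyperplane in every direction.)
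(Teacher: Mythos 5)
Your proposal is correct and follows essentially the same route as the paper: identify the support function of $M_i$ with that of its convex hull $E_i$ (the paper asserts this "by convexity," you supply the convex-combination computation), then apply monotonicity of the supremum under the containment hypothesis. The only cosmetic difference is that the paper passes to $E_2 \subset E_1$ while you use $M_2 \subset E_1$ directly, which changes nothing.
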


%\begin{remark}Though we don't need this fact in the paper, we actually proved more general lemma which applies to non smooth convex surfaces. For any two supporting hyperplanes $l_1$ and $l_2$ of $M_1$ and $M_2$, respectively, with the same inward normal vector $n_0$, $l_2$ is more located toward $n_0$-direction than $l_1$.  
%
%\end{remark}

\begin{proposition}\label{prop-curvbdd}
Let $M_t$ be a solution of the $\alpha$-CSF as in Proposition  \ref{prop-basic}. Then, given  $t_0>0$, we have
\begin{align*}
 \lim_{ n \to \pm e_1}\sup_{s\in [t,t+3]} \ka^\alpha(n,s)=0 \qquad \text{ and} \qquad \ka^\alpha(n,t)\le C
\end{align*}
for all \ $n \in \mathcal{N}=\{\langle n,e_2\rangle>0\}$ \ and \ $t\ge t_0$, \ where the constant  $C$ depends on $t_0$ and $M_0$. \\ In addition, for each $\delta \in (0,\frac{1}{10})$, there is a large $T > 0$ and $c(\delta)>0$ such that  $$  \ka^\alpha(n,t) \ge c(\delta) $$ whenever $\langle n,e_2 \rangle \geq \delta$ and $t \ge T$. The constants $T$ and $c(\delta)$ may depend on $M_0$ and $\delta$.

\begin{proof} 
We begin by observing that the support function $S(n,t)$ of a  solution $M_t$ of  the $\alpha$-CSF (defined by
\eqref{eqn-support})  satisfies  $\partial_t S(n,t) = -\ka^\alpha (n, t)$. 

Therefore, the  Harnack inequality \ref{prop-harnack} and the above observation yield
\begin{align*}
c(t_0) \, \ka^\alpha(n,\tau) \leq   \int_{\tau}^{\tau+1}\ka^\alpha(n,s)\, ds =S(n,\tau)-S(n,\tau+1)\leq S(n,t)-S(n,t+4),
\end{align*} 
for $\tau \in [t,t+3] $ and $t \geq t_0>0$. Since $\displaystyle\lim_{n \to \pm e_1} S(n,t)=\lim_{n \to \pm e_1} S(n,t+4)= 1$ by Lemma \ref{lem-support}, we have the first desired result
\begin{align*}
\lim_{n\to\pm 1}\sup_{\tau\in [t,t+3]}\ka^\alpha(n,\tau)=0.
\end{align*}

\bigskip

Given $\alpha \in (\frac{1}{2},1)$, we denote by $M_\alpha$ the translator $M_\alpha=\{(x,f_\alpha(x)): |x|<1\}$, where $f_\alpha$ is given in Proposition \ref{prop-translator}. For $\alpha>0$, we define $M_\alpha$ by 
\begin{align*}
M_\alpha=\{(x_1,f_\alpha(x_1)): |x_1|<1\}\cup \{(\pm 1, x_2):x_2 \geq \lim_{|y|\to 1}  f_\alpha(y)\}.
\end{align*} 
Let us fix a small $\e_0\in(0,1/10)$. Then depending on $M_0$, there is $L>0$ so that the convex hull of $${\hat M}_\alpha:=\fr{1}{(1-\e_0)^{1/\alpha}} \, M_\alpha - L e_2 $$ contains initial surface $M_0$ and $$\bar{M}_\alpha:=\fr{1}{(1+\e_0)^{1/\alpha}}\, M_\alpha+L  e_2$$ is contained in the convex hull of $M_0$. Then,  $\hat M_t^\alpha \coloneqq {\hat  M_\alpha}+(1-\e_0) \, m t\, e_2$  and $\bar M_t^{\alpha}\coloneqq \bar{M}^\alpha + (1+\e_0) \, m t\, e_2$ are solutions of the $\alpha$-CSF, where $m=m(\alpha)$ is the positive constant given in \eqref{eq-speed}.
 
Let us denote ${\hat S}_{\alpha}$ and ${\bar S}_{\alpha}$ by the support functions of the outer barrier $\hat M_t^\alpha$ and the inner barrier $\bar M_t^{\alpha}$, respectively. Thus $\p_t {\bar S}^\alpha = -(1+\e_0) m \, \langle n,e_2\rangle$,  and $\p_t {\hat S}^\alpha = -(1-\e_0)m \, \langle n,e_2\rangle$. Moreover, if  $K=\sup\{ S(n,0):\langle n,e_2\rangle>0\}$,  we have 
\be\label{eqn-K1}
0<{\hat S}^\alpha(n,t) - {\bar S}^\alpha(n,t) \le 2(L+\e_0 \, mt)\langle n,e_2\rangle+ \Big ( \fr{1}{(1-\e_0)^{1/\alpha}} -\fr{1}{(1+\e_0)^{1/\alpha}}\Big)K
\ee
for all $(n,t)\in \mathcal{N}\times (0,\infty)$.  Let us set  $M:=  \Big ( \fr{1}{(1-\e_0)^{1/\alpha}} -\fr{1}{(1+\e_0)^{1/\alpha}}\Big ) $ from now on. 
 
Next, by the comparison principle and Lemma \ref{lem-geom}, ${\bar S}^\alpha \le S \le {\hat S}^\alpha$ for all $(n,t)\in \mathcal{N}\times (0,\infty)$. Now, if $\ka^\alpha(n_0,t_0) = C$ then by the Harnack estimate, $\ka^\alpha(n_0,t)\ge \eta \, C$ for  $t\in[t_0,4t_0]$  and  some $\eta=\eta(\alpha)\in(0,1)$. By \eqref{eqn-K1},  \be 0\le  S-{\bar S}^\alpha \le {\hat S}^\alpha-{\bar S}^\alpha\le 2 (L+\e_0 \, m t)+MK \ee on $(n,t)\in \mathcal{N}\times (0,\infty)$   and hence using that $\p_t  S = - \kappa^\alpha$, we obtain 
\be\ba0\le S(n_0,4t_0)-{\bar S}^\alpha(n_0,4t_0) &= S(n_0,t_0)-{\bar S}^\alpha(n_0,t_0)+ \int_{t_0}^{4t_0} \p_t(S - {\bar S}^\alpha)(n_0,t) dt \\
 & \le   2(L+\e_0\, m t_0)+M K +  \big ( -\eta {C} +(1+\e_0) \, m \big )\, 3t_0.\ea\ee
Now we observe  that there is a constant $C(t_0)$ such that $C\ge C_0(t_0)$ makes last line negative. i.e. contradiction. It is also clear that such a $C(t_0)$ can be made uniformly bounded as $t_0\to \infty$. This proves the uniform curvature upper bound 
\begin{align*}
\ka^\alpha(n,t) \leq C(t_0,M_0).
\end{align*} 
 
\bigskip

We suppose next that $\ka^ \alpha(n_0, t_0)=c$,  for some $\langle n_0,e_2\rangle \geq \delta$ with $\delta \in (0,\frac{1}{10})$. By the Harnack estimate, $\ka^\alpha(n_0,t)\le c \, \delta^{-\frac{\alpha}{1+\alpha}}$  for $t\in[ \delta t_0,t_0]$. Similar computation yields 
\begin{align*}
0\le {\hat S}^\alpha(n_0,t_0)-S(n_0,t_0) &= {\hat S}^\alpha(n_0,\delta t_0)-S(n_0,\delta t_0)+ \int_{\delta t_0}^{t_0} \p_t( {\hat S}^\alpha-S)(n_0,t) dt \\
 & \le   2(L+\e_0m \delta t_0)+MK +  \Big ( c\, \delta^{-\frac{\alpha}{1+\alpha}} t_0-(1-\e_0)m \, \la n_0, e_2\ra (1-\delta)t_0 \Big )\\
 \text{(since $0<\e_0,\, \delta<1/10$)} \quad &\le 2L+\fr{1}{5}\delta mt_0+ MK + \Big (c\, \delta^{-\frac{\alpha}{1+\alpha}} - \fr{81}{100}m\delta\Big ) t_0\\
 & \le 2L +MK+ \Big (c\, \delta^{-\frac{\alpha}{1+\alpha}} -\fr{1}{2}m\delta\Big ) t_0.
\end{align*}
Now, we set $T=4(m\delta)^{-1}(2L+MK+1)$. Then, for $t_0 \geq T$ we must have
\begin{align*}
c\, \delta^{-\frac{\alpha}{1+\alpha}} \geq \frac{1}{4}m\, \delta
\end{align*}
in order to satisfy the  inequality above. We conclude that 
$$\kappa^\alpha(n_0,t_0) =c  \geq \frac 14 \, m \, \delta^{1+\frac{\alpha}{1+\alpha}}:=c(\delta)$$
completing the proof of the last claim of our proposition. 
\end{proof}
\end{proposition}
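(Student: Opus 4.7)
My overall strategy is to exploit the identity $\partial_t S(n,t) = -\ka^\alpha(n,t)$ for the support function together with the Harnack estimate of Proposition \ref{prop-harnack}. The Harnack inequality converts time-integrated information on $\ka^\alpha$ into pointwise bounds, and the identity lets me read off $\ka^\alpha$ from the temporal variation of $S$, which in turn is controlled by sandwiching $M_t$ between two translating soliton barriers.

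For the first claim, I would integrate $\partial_t S = -\ka^\alpha$ over the unit interval $[\tau,\tau+1]$ and then use Proposition \ref{prop-harnack} to bound $\ka^\alpha(n,\tau)$ by a constant (depending only on $t_0$) times this integral. This produces an inequality of the form $c(t_0)\,\ka^\alpha(n,\tau) \leq S(n,\tau) - S(n,\tau+1)$, valid for $\tau \in [t,t+3]$ and $t \geq t_0$. Since $S(n,\cdot)$ is monotonically nonincreasing in $t$, the right side is dominated by $S(n,t) - S(n,t+4)$, and Lemma \ref{lem-support} tells us that both $S(n,t)$ and $S(n,t+4)$ tend to $1$ as $n \to \pm e_1$. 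The first claim follows, uniformly in $\tau \in [t,t+3]$.

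For the curvature bounds, I would build translator barriers out of $M_\alpha$ from Proposition \ref{prop-translator}. Fix a small $\epsilon_0 \in (0,1/10)$ and choose $L$ large enough so that $\hat M_\alpha := (1-\epsilon_0)^{-1/\alpha} M_\alpha - L\,e_2$ encloses $M_0$, while $\bar M_\alpha := (1+\epsilon_0)^{-1/\alpha} M_\alpha + L\,e_2$ is enclosed by $M_0$. By scaling, the flowed barriers $\hat M_t^\alpha := \hat M_\alpha + (1-\epsilon_0)mt\,e_2$ and $\bar M_t^\alpha := \bar M_\alpha + (1+\epsilon_0)mt\,e_2$ are $\alpha$-CSF solutions, where the inner one moves strictly faster than the outer one. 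Comparison together with Lemma \ref{lem-geom} gives the sandwich $\bar S^\alpha \leq S \leq \hat S^\alpha$, and a direct computation shows the gap $\hat S^\alpha - \bar S^\alpha$ grows at most linearly, bounded by $2(L+\epsilon_0 mt)\langle n,e_2\rangle$ plus a fixed constant. Now if $\ka^\alpha(n_0,t_0) = C$ were very large, Harnack would force $\ka^\alpha(n_0,\cdot) \geq \eta C$ on $[t_0,4t_0]$, producing $S(n_0,t_0) - S(n_0,4t_0) \geq 3\eta C t_0$, which violates the sandwich once $C$ exceeds a $t_0$-dependent threshold; a careful choice of constants in fact makes this threshold uniform as $t_0 \to \infty$. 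Symmetrically, running Harnack backwards gives $\ka^\alpha(n_0,t) \leq c\,\delta^{-\alpha/(1+\alpha)}$ on $[\delta t_0, t_0]$ when $\ka^\alpha(n_0,t_0)=c$; if in addition $\langle n_0,e_2\rangle \geq \delta$, then $S(n_0,\cdot)$ would drop too slowly for the inner barrier $\bar S^\alpha$ (which falls at rate $(1+\epsilon_0)m\delta$) to remain below $S$, and this contradiction, once $t_0$ exceeds an explicit $T(\delta,M_0)$, pins down $c \geq c(\delta)$ of the form $\tfrac{1}{4}m\,\delta^{1+\alpha/(1+\alpha)}$.

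The main obstacle is the bookkeeping in this sandwich argument: one must tune $\epsilon_0$, $L$, and the Harnack time window so that the linear-in-$t_0$ decrease of $S$ forced by Harnack strictly dominates the linear-in-$t_0$ gap between the inner and outer barriers. The fact that the upper bound can be made uniform as $t_0 \to \infty$ relies on the coefficients balancing out to leave no growing power of $t_0$ on the right-hand side, while for the lower bound the threshold $T = 4(m\delta)^{-1}(2L + MK + 1)$ must absorb both barrier parameters and the initial spread $K = \sup\{S(n,0) : \langle n,e_2\rangle > 0\}$.
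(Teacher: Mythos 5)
Your proposal follows essentially the same route as the paper's proof: the identity $\partial_t S=-\kappa^\alpha$ plus the Harnack estimate for the boundary decay, and the sandwich $\bar S^\alpha\le S\le \hat S^\alpha$ between the two rescaled translator barriers (with the same constants $K$, $T$, and $c(\delta)=\tfrac14 m\,\delta^{1+\alpha/(1+\alpha)}$) for the upper and lower curvature bounds. One small correction: in the lower-bound step the contradiction comes from the \emph{outer} barrier $\hat S^\alpha$, which descends at rate $(1-\epsilon_0)m\langle n_0,e_2\rangle\ge$ (roughly) $m\delta$ and would be forced below $S$ if $S$ decreased too slowly, violating $S\le\hat S^\alpha$ — not from the inner barrier $\bar S^\alpha$ failing to stay below $S$, since a faster-falling inner barrier causes no contradiction.
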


\subsection{Barrier Construction} 

Based on our uniform curvature bound given in Proposition \ref{prop-curvbdd} and the fact that $\lim_{\theta\to 0} \kappa =0$, the following barrier shows that the  modulus of continuity of $k(\theta,t)$ at $\theta=0$ is 
$\kappa(\theta,t)= O(\kappa^{1-\e})$,  for every $\e>0$ and   $t \geq t_1 \gg 1$ .
\begin{lemma}\label{lem-barrier}For every  $t_0>0$ with $t_0< \min(3, \fr{6}{1+\fr{1}{\alpha}})$, there is $A_0>0$ such that for every $A> A_0$, the function defined by \begin{equation*}
h_\delta:=A\sin^{t/3} \Big (\fr{3\theta}{t}\Big ) +\delta \quad \text{ on }\theta\in \Big (0, \fr{t\pi}{6}\Big ), \,\,  t\in(0,t_0]
\end{equation*}
%Then, \begin{align*}v(\theta,t)&=\min (u(\theta,t)+\delta, c(t):=\Big ({23}/{5}-2t\Big )^{-1/2} , u(\pi-\theta,t)+\delta)\\
%&=\begin{cases}(\sin (3 \theta t^{-1}) )^{t/3}+\delta & \quad \text{if}\; \theta \in (0,\frac{\arcsin(c(t)-\delta)}{3}t] \\
%\Big (\fr{23}{5}-2t\Big )^{-1/2} & \quad \text{if}\; \theta \in (\frac{\arcsin(c(t)-\delta)}{3}t,\pi-\frac{\arcsin(c(t)-\delta)}{3}t)\\ 
%(\sin (3 (\pi-\theta) t^{-1}) )^{t/3}+\delta & \quad \text{if}\; \theta \in (\pi-\frac{\arcsin(c(t)-\delta)}{3}t,\pi) \end{cases}
%\end{align*}
is a viscosity supersolution of \eqref{eq-k^a} for all $0<\delta \le\delta_0(t_0,A)$. \end{lemma}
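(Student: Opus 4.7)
The plan is to verify by direct computation that $h_\delta$ is a classical supersolution of \eqref{eq-k^a} on the open region where it is smooth; this automatically makes it a viscosity supersolution. Setting $f(\theta,t) := A \sin^{t/3}(3\theta/t)$ so that $h_\delta = f + \delta$, and introducing $y := 3\theta/t \in (0,\pi/2)$, the chain rule together with $\cos^2 y + \sin^2 y = 1$ produces the clean identity
\[
f_{\theta\theta} + f \;=\; \bigl(1 - \tfrac{3}{t}\bigr)\, A\, (\sin y)^{t/3-2}.
\]
Because $t \leq t_0 < 3$ and $(\sin y)^{t/3-2} \geq 1$, we obtain the uniform lower bound $|f_{\theta\theta}+f| \geq (3/t_0-1)A$; hence for $\delta \leq \delta_0(t_0,A)$ small, the sign $h_{\theta\theta}+h < 0$ is preserved throughout the domain.

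For the time derivative, logarithmic differentiation and $\partial_t y = -y/t$ give
\[
f_t \;=\; f\bigl[\tfrac{1}{3}\ln\sin y - \tfrac{y}{3}\cot y\bigr] \;\leq\; 0,
\]
so both sides of the target inequality $h_t \geq \alpha h^{1+1/\alpha}(h_{\theta\theta}+h)$ are non-positive. Introducing $L(y) := \tfrac{y}{3}\cot y - \tfrac{1}{3}\ln\sin y \geq 0$ and $\gamma := \tfrac{t}{3}\cdot\tfrac{1+\alpha}{\alpha} - 2$, after dropping the $\delta$ terms and dividing through by $f$, the desired bound reduces to
\[
L(y) \;\leq\; \alpha A^{1+1/\alpha}\bigl(\tfrac{3}{t}-1\bigr)(\sin y)^{\gamma}.
\]
The hypothesis $t_0 < 6/(1+1/\alpha) = 6\alpha/(\alpha+1)$ is precisely what forces $\gamma < 0$, so $(\sin y)^{\gamma}$ diverges polynomially as $y \to 0$, whereas $L(y) \sim \tfrac{1}{3}|\ln y|$ diverges only logarithmically. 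Consequently $L(y)(\sin y)^{-\gamma}$ extends continuously to $0$ at both endpoints of $(0,\pi/2]$; since $\gamma(t)$ stays in a compact subinterval of $(-2,0)$ as $t$ varies over $(0, t_0]$, this quantity is bounded uniformly by some $C$. Choosing $A_0$ large enough that $\alpha A_0^{1+1/\alpha}(3/t_0-1) \geq 2C$ provides a definite margin in the $\delta = 0$ inequality, and $\delta_0(t_0, A)$ is then chosen small enough that the correction $\alpha(f+\delta)^{1+1/\alpha}\delta$ is absorbed by the remaining half.

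The main obstacle is the simultaneous singular behavior at $y=0$, where $|f_t|$ vanishes only logarithmically (with prefactor $A(\sin y)^{t/3}$) while $\alpha f^{1+1/\alpha}|f_{\theta\theta}+f|$ has the polynomial profile $\alpha A^{2+1/\alpha}(3/t-1)(\sin y)^{\gamma}$; a naive bound would be inconclusive. The exponent condition $t_0 < 6/(1+1/\alpha)$ is tightly tailored so that the polynomial blow-up on the right overtakes the logarithmic growth on the left. Once this dominance is secured, uniformity in $t \in (0, t_0]$ and the absorption of the small $\delta$ correction become routine.
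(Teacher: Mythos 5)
Your $\delta=0$ computation is correct and is essentially the paper's own argument: the identity $f_{\theta\theta}+f=(1-3/t)A(\sin y)^{t/3-2}$, the formula $f_t=-f\,L(y)$, and the reduction to $L(y)\le \alpha A^{1+1/\alpha}(3/t-1)(\sin y)^{\gamma}$ with $\gamma=\frac{(1+\alpha)t}{3\alpha}-2<0$ all coincide with the paper, which controls the logarithmic term via the elementary bound $x^{p}\log x\ge -1/(pe)$ rather than your continuity argument for $L(y)(\sin y)^{-\gamma}$. (A small slip: $\gamma(t)$ does not stay in a compact subinterval of $(-2,0)$, since $\gamma\to-2$ as $t\to0^{+}$; but because $\sin y\le 1$ the quantity $L(y)(\sin y)^{-\gamma}$ is largest at $\gamma=\gamma(t_0)$, so your uniform constant $C$ exists anyway.)

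The genuine gap is in the $\delta$-absorption step. After discarding the positive term $\alpha\big[(f+\delta)^{1+1/\alpha}-f^{1+1/\alpha}\big]\,|f_{\theta\theta}+f|$, the "remaining half" of your reserve equals $\frac{\alpha}{2}A^{2+1/\alpha}(3/t-1)(\sin y)^{\frac{(2\alpha+1)t}{3\alpha}-2}$, and the exponent $\frac{(2\alpha+1)t}{3\alpha}-2$ is \emph{positive} whenever $t>\frac{6\alpha}{2\alpha+1}$, a regime the hypotheses allow since $\frac{6\alpha}{2\alpha+1}<\min\big(3,\frac{6\alpha}{\alpha+1}\big)$ for every $\alpha$ (e.g.\ $\alpha=1$ and $t$ near $3$; or, for the paper's application with $t_0=2$, any $\frac12<\alpha<1$). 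In that regime the reserve tends to $0$ as $\theta\to 0$ (i.e.\ $y\to0$), while your correction $\alpha(f+\delta)^{1+1/\alpha}\delta$ tends to the positive constant $\alpha\delta^{2+1/\alpha}$ because $f\to0$; so no choice of small $\delta$ makes the claimed absorption work near the lateral boundary, and your chain of inequalities fails there. The cure is exactly the term you threw away: since $(f+\delta)^{1+1/\alpha}-f^{1+1/\alpha}\ge\delta^{1+1/\alpha}$ and $|f_{\theta\theta}+f|=(3/t-1)A(\sin y)^{t/3-2}\ge(3/t_0-1)A$, that term is bounded below by $\alpha(3/t_0-1)A\,\delta^{1+1/\alpha}$ uniformly in $(\theta,t)$, which dominates the $O(\delta^{2+1/\alpha})$ part of the correction surviving at $y\to0$, while the part proportional to $f^{1+1/\alpha}\delta$ is indeed absorbed by the reserve because $(\sin y)^{\frac{(1+\alpha)t}{3\alpha}}\le(\sin y)^{\frac{(2\alpha+1)t}{3\alpha}-2}$. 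This is precisely the role of the term $-\delta(\alpha+1)w^{1/\alpha}w_{\theta\theta}$ that the paper keeps in its Taylor-expansion bookkeeping; with that term restored your argument closes.
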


\begin{proof} Set 
$$w(\theta,t)=A\, \varphi(\theta,t)^{t/3} \qquad \mbox{where} \quad \varphi(\theta,t):= \sin (\fr{3\theta}{t}).$$
Then,  for $\theta \in (0,\frac{\pi}{6}t)$ and $0<t\le t_0$, we compute 
\begin{align*}
w_\theta =A \, \varphi^{t/3-1}  \cos (3 \theta t^{-1}) 
\end{align*}
and 
\begin{align*}
w_{\theta\theta}& =A\, (1-3/t)\, \varphi^{t/3-2} \, \cos^2  (3 \theta t^{-1})  -A\, (3/t)\, \varphi^{t/3} \\
& =A\, (1-3/t)\, \varphi^{t/3-2} -A\, \varphi^{t/3} = A\, (1-3/t)\, \varphi^{t/3-2} - w. 
\end{align*}
Therefore,
\begin{align*}
\alpha \, w^{1+\fr{1}{\alpha}}\, \big (w_{\theta\theta} +w \big )  =\alpha \, A^{2+\fr{1}{\alpha}}\, (1-3/t)\varphi^{\fr{2\alpha+1}{3\alpha}t-2}.
\end{align*}
On the other hand, expressing $w/A=\exp \big ( (t/3) \log \varphi \big  )$, we have 
\begin{align*}
\fr{w_t}{A} & =\frac{1}{3}\varphi^{t/3}\Big ( -(3 \theta t^{-1})\cot (3 \theta t^{-1})+\log \varphi \Big )  \\
& \geq -\frac{1}{3} \varphi^{t/3}+\frac{1}{3} \varphi^{\fr{2\alpha+1}{3\alpha}t-2}\varphi^{2-\fr{1+\alpha}{3\alpha}t}\log \varphi \\& \geq -\frac{1}{3} \varphi^{\fr{2\alpha+1}{3\alpha}t-2}-\frac{1}{3} \varphi^{\fr{2\alpha+1}{3\alpha}t-2} \fr{3\alpha}{(6\alpha-(1+\alpha)t)e}\\
&\geq -\frac{1}{3} \Big ( 1+\fr{3}{(6-(1+\fr{1}{\alpha})t)e} \Big ) \, \varphi^{\fr{2\alpha+1}{3\alpha}t-2}. 
\end{align*}
Combining the  two inequalities, yields 
\begin{align*}
w_t-\alpha \, w^{1+\fr{1}{\alpha}}(w_{\theta\theta} +w)\ge \fr{A}{3} \Big  [\Big (\fr{3}{t}-1\Big )3\alpha A^{1+\frac 1{\alpha}}- 1- \fr{3}{(6-(1+\fr{1}{\alpha})t)e}\Big )\varphi^{\fr{2\alpha+1}{3\alpha}t-2}.\end{align*}
Hence, for ${\displaystyle t_0< \min(3, \fr{6}{1+\fr{1}{\alpha}})}$, there is $\e=\e(t_0,\alpha)>0$ such that \begin{align*}
w_t-\alpha w^{1+\fr{1}{\alpha}}(w_{\theta\theta} +w)\ge \fr{A}{3} \Big  [\e A^{1+\frac 1{\alpha}}- \fr{1}{\e}\Big )\varphi^{\fr{2\alpha+1}{3\alpha}t-2}.
\end{align*}
This proves  there exists  $A_0(t_0,\alpha)>0$ and $\e_0(t_0,\alpha)>0$ such that if $A\geq  A_0$, 
\be\label{eqn-ineqw}
w_t-\alpha w^{1+\fr{1}{\alpha}}(w_{\theta\theta} +w)\ge \e_0\varphi^{\fr{2\alpha+1}{3\alpha}t-2} \ge0
\ee
holds  on $\theta\in(0,\fr{\pi t}{6})$ and $t\in(0,t_0)$. 
\smallskip

For the next step, we set  $h_\delta := w+\delta$  for a  small constant $\delta>0$ and to simplify  the notation we drop
the index $\delta$ from $h$ for the rest of the proof, denoting $h:=h_\delta$. 
Then for $\theta\in(0,\fr{\pi}{6}t)$ and $0<t\le t_0$, we compute 
\begin{align*}
h_t-\alpha h^{1+\fr{1}{\alpha}} (h_{\theta\theta} + h) &= (w_t-\alpha w^{1+\fr{1}{\alpha}} (w_{\theta\theta}+w)) -\alpha (h^{1+\fr{1}{\alpha}}-w^{1+\fr{1}{\alpha}}) \, w_{\theta\theta} -\alpha(h^{2+\fr{1}{\alpha}}-w^{2+\fr{1}{\alpha}}).
\end{align*}
Observe  that, by  Taylor's Theorem, we have 
\begin{align*}
 -\alpha (h^{1+\fr{1}{\alpha}}-w^{1+\fr{1}{\alpha}})\, w_{\theta\theta} -\alpha(h^{2+\fr{1}{\alpha}}-w^{2+\fr{1}{\alpha}})&= -\alpha \Big ( \delta (1+\fr{1}{\alpha}) \bar w^{\fr{1}{\alpha}} w_{\theta\theta}+\delta (2+\fr{1}{\alpha}) \hat w^{1+\fr{1}{\alpha}} \Big ) \\
 &\ge -\delta(\alpha+1) w^{\fr{1}{\alpha}} w_{\theta\theta}-\delta(2\alpha+1) (w+\delta)^{1+\fr{1}{\alpha}}
\end{align*}where we used that $w \leq \bar w, \hat w \leq w+\delta$ and $w_{\theta\theta} \le 0$, $w\ge0$. Hence,
using \eqref{eqn-ineqw} we obtain 
\begin{align*}
h_t-\alpha h^{1+\fr{1}{\alpha}}  (h_{\theta\theta}+h ) &= \big (w_t-\alpha w^{1+\fr{1}{\alpha}} (w_{\theta\theta}+w) \big ) -\alpha (h^{1+\fr{1}{\alpha}}-w^{1+\fr{1}{\alpha}})w_{\theta\theta} -\alpha(h^{2+\fr{1}{\alpha}}-w^{2+\fr{1}{\alpha}})\\
&\ge \e_0\varphi^{\fr{2\alpha+1}{3\alpha}t-2} -\delta(\alpha+1) w^{\fr{1}{\alpha}} w_{\theta\theta}-\delta(2\alpha+1) (w+\delta)^{1+\fr{1}{\alpha}}\\
&\ge \big (  \e_0\varphi^{\fr{2\alpha+1}{3\alpha}t-2}-\delta(2\alpha+1) w^{1+\fr{1}{\alpha}} \big ) \\
&\quad\quad\quad\quad\quad +\delta \big ( -(\alpha+1)w^{\fr{1}{\alpha}}w_{\theta\theta}+ (2\alpha+1) (w^{1+\fr{1}{\alpha}}-(w+\delta)^{1+\fr{1}{\alpha}})\big ) \\
&\ge \big ( \e_0\varphi^{\fr{2\alpha+1}{3\alpha}t-2}-\delta(2\alpha+1) A^{1+\fr{1}{\alpha}}\varphi^{\fr{\alpha+1}{3\alpha}t} \big ) \\
&\quad\quad\quad\quad\quad +\delta \, \big ( -(\alpha+1)w^{\fr{1}{\alpha}}w_{\theta\theta}-(2\alpha+1)(1+\fr{1}{\alpha})\delta (w+\delta)^{\fr{1}{\alpha}} \big ) \\
&\ge \big (  \e_0-\delta(2\alpha+1) A^{1+\fr{1}{\alpha}} \big ) \varphi^{\fr{2\alpha+1}{3\alpha}t-2}\\
&\quad\quad\quad\quad\quad +\delta[-(\alpha+1)w^{\fr{1}{\alpha}}w_{\theta\theta}-(2\alpha+1)(1+\fr{1}{\alpha})\delta (w+\delta)^{\fr{1}{\alpha}} \big ).
\end{align*}
Moreover, using the earlier calculation of $w_{\theta\theta}$ and $t \geq t_0$, we have 
\begin{align*} -w_{\theta\theta}w^{\fr{1}{\alpha}} \ge \Big (A\big (\fr{3}{t_0}-1\big )\varphi^{\fr{t}{3}-2} \Big ) w^\fr{1}{\alpha}\ge A^{1+\fr{1}{\alpha}} \big (\fr{3}{t_0}-1\big )\varphi^{\fr{1+\alpha}{3\alpha}t -2} \ge A^{1+\fr{1}{\alpha}} \big (\fr{3}{t_0}-1\big ). \end{align*} 
The last two inequalities imply that  there is small a $\delta_0=\delta_0(\e_0, A, t_0)= \delta_0(A,t_0)$ such that, for $0<\delta\le\delta_0$, \[h_t-\alpha h^{1+\fr{1}{\alpha}} (h_{\theta\theta}+h) \ge 0\quad\mbox{holds on }\,\, \theta\in(0,\fr{\pi}{6}t),\, \,\, t\in(0,t_0).\]
This completes the proof.  \end{proof}

This   barrier gives the following, important for our purposes,  curvature decay estimate at the two boundary points $\theta=0, \pi$:

\begin{theorem}[Curvature decay]\label{thm-curvaturedecay}For  $\alpha>1/2$  and $t >3$, we have 
\[\ka^\alpha(\theta,t),\,\ka^{\alpha}(\pi-\theta,t) \le C(M_0,\alpha)\, \theta^{\fr{2}{3}}  \qquad \mbox{on} \,\,\,  \theta\in(0,\pi).\]
\begin{proof} Given $\alpha>\frac{1}{3}$, we have $2<\min(3,\frac{6}{1+\alpha^{-1}})$. It suffices to show for any fixed $t_1>1$  the statement holds at $t=t_1+2$. 

Setting $t_0=2$, let $A_0=A_0(t_0)$ be the constant   given in Lemma \ref{lem-barrier}. By Proposition \ref{prop-curvbdd} we can choose a constant $A>\max\{A_0,\sup_{t \geq 1}\sup_{\theta \in (0,\pi)} \ka^\alpha(\theta,t)\}$,
so  that 
$$\tilde u_{t_1}(\theta,t):=\ka^\alpha(\theta,t_1+t) < A+\delta=w_\delta(t\pi/6,t)$$
for $t\in (0,2]$ and $\theta \in (0,\pi)$, where $w_\delta=A\sin^{t/3} \Big (\fr{3\theta}{t}\Big ) +\delta$ as  given in Lemma \ref{lem-barrier}. Moreover, Proposition \ref{prop-curvbdd} implies that there exists a small constant $c(t_1,\delta)$ such that  $$\ka^\alpha(\theta,t_1+t)< \delta \leq  w_\delta(\theta,t)$$ holds for $0<\theta \leq c(t_1,\delta)$ and  $t \in (0,2]$. Let us denote $t_2=\frac{6c}{\pi}$. Then, $t \in (0,t_2]$ satisfies $\theta \in (0,\frac{t\pi}{6}) \subset (0,\frac{t_2\pi}{6})=(0,c)$. Thus,  
\begin{align*}
\ka^\alpha(\theta,t_1+t)< \delta \leq w_\delta(\theta,t_1+t),
\end{align*}
holds for $t\in (0,t_2]$ and $\theta \in (0,\frac{t\pi}{6})$. Hence, by the comparison principle $\ka^\alpha(\theta,t_1+t) \leqq w_\delta(\theta,t_1+t)$ holds for $\theta \in (0,\frac{t\pi}{6})$ and $t\in (0,2]$, which implies $\ka^\alpha(\theta,t_1+2) \leq w_\delta(\theta,t_1+2)$ for  $\theta \in (0,\pi/3)$. Passing $\delta$ to zero in the last inequality, the following holds for $\theta \in (0,\pi/3)$
\begin{align*}
\ka^\alpha(\theta,t_1+2)\leq A \, \sin^{\frac{t_1+2}3} \big (\frac{3\theta}{t_1+2} \big ) \leq A \, \sin^{\frac 23}(\frac{3\theta}2)\leq C\, \theta^{\frac 23}
\end{align*}
where the constant $C$ depends on $M_0$ and $\alpha$. This concludes the proof of our theorem. 
\end{proof}
\end{theorem}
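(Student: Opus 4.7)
The plan is to reduce the endpoint decay to a single comparison argument with the one-parameter family of supersolutions $h_\delta$ constructed in Lemma 2.10. Since any time $t > 3$ can be written as $t_1 + 2$ for some $t_1 > 1$, it suffices to establish the bound at time $t_1 + 2$ for arbitrary fixed $t_1 > 1$. The hypothesis $\alpha > 1/2$ gives $\alpha > 1/3$, which in turn gives $2 < \min(3, 6/(1+\alpha^{-1}))$, so Lemma 2.10 applies with $t_0 = 2$ and produces a constant $A_0$ such that $w_\delta(\theta,t) := A \sin^{t/3}(3\theta/t) + \delta$ is a viscosity supersolution of the evolution equation for $\kappa^\alpha$ on the parabolic region $D := \{(\theta,t) : t \in (0,2],\, \theta \in (0, \pi t/6)\}$, whenever $A \geq A_0$ and $0 < \delta \leq \delta_0(A, t_0)$.

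Next I would fix $A$ larger than both $A_0$ and $\sup_{s \geq 1,\, \theta \in (0,\pi)} \kappa^\alpha(\theta,s)$, which is finite by the global upper bound in Proposition 2.7. This selection trivializes the right lateral boundary of $D$: at $\theta = t\pi/6$ one has $w_\delta = A + \delta > \kappa^\alpha(\theta, t_1+t)$ for all $t \in (0,2]$. For the left boundary I would use the endpoint decay $\lim_{n\to e_1} \sup_{s \in [t_1, t_1+3]} \kappa^\alpha(n,s) = 0$, also from Proposition 2.7, to obtain $c = c(t_1,\delta) > 0$ with $\kappa^\alpha(\theta, s) < \delta$ for all $\theta \in (0,c)$ and $s \in [t_1, t_1+2]$.

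The remaining subtle point is that $D$ degenerates to a single point at $t = 0$, so there is no standard initial slice. To handle this, I would observe that for $t \in (0, t_2]$ with $t_2 := 6c/\pi$, the entire cross-section $(0, t\pi/6)$ lies inside $(0, c)$, so the inequality $\kappa^\alpha(\theta, t_1 + t) < \delta \leq w_\delta(\theta, t)$ holds automatically. This supplies a full slab of initial data at $t = t_2$, and the standard parabolic comparison principle on $\{t \in [t_2, 2],\, \theta \in (0, t\pi/6)\}$ with both lateral boundaries under control then yields $\kappa^\alpha(\theta, t_1 + t) \leq w_\delta(\theta, t)$ throughout $D$. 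Evaluating at $t = 2$, sending $\delta \downarrow 0$, and using $\sin(3\theta/2) \leq C\theta$ on $(0, \pi/3)$ gives $\kappa^\alpha(\theta, t_1 + 2) \leq A \sin^{2/3}(3\theta/2) \leq C \theta^{2/3}$ on $(0, \pi/3)$. The range $[\pi/3, \pi)$ is absorbed using the global curvature bound, and the symmetric bound near $\theta = \pi$ follows by running the identical argument with $\theta$ replaced by $\pi - \theta$.

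The main obstacle I anticipate is precisely the degeneration of the barrier domain as $t \to 0^+$, which obstructs a direct application of parabolic comparison with initial data prescribed at $t = 0$. The bootstrap through $t_2$ resolves this, but crucially depends on the uniform endpoint decay of $\kappa^\alpha$ from Proposition 2.7, which itself rested on the support-function analysis and the Harnack inequality. This dependency is also what forces the restriction to $t > 3$: the argument requires the full time interval $[t_1, t_1 + 2]$ of decay with some $t_1 > 1$, giving the $+2$ contribution from the supersolution lifetime $t_0 = 2$ on top of the $t_1 > 1$ threshold needed to activate the uniform bounds.
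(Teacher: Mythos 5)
Your proposal is correct and follows essentially the same route as the paper: the same barrier $w_\delta$ from Lemma \ref{lem-barrier} with $t_0=2$, the same choice of $A$ via the global bound in Proposition \ref{prop-curvbdd}, the same use of the endpoint decay to control the region $\theta\le c(t_1,\delta)$ and thereby resolve the degeneration of the domain at $t=0$ (the paper runs comparison on all of $(0,2]$ after noting the inequality holds automatically for $t\le t_2=6c/\pi$, which is the same device as your initial slab at $t=t_2$), followed by $\delta\downarrow 0$ and $\sin(3\theta/2)\le C\theta$. Your explicit remarks about absorbing $[\pi/3,\pi)$ with the global bound and treating $\pi-\theta$ by symmetry only make explicit what the paper leaves implicit.
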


\section{Decay Estimates (Pointwise curvature derivative estimates)}

In this section we will use the curvature decay estimate at the boundary points $\theta=0,\pi$ proven in Theorem
\ref{thm-curvaturedecay} to obtain decay estimates  for the first and the second order derivative at the boundary points $\theta=0,\pi$ for $u:=\bar \kappa^\alpha$. As a consequence we will obtain the estimate in Theorem \ref{thm-decay}
which will allow us to control the boundary terms when we prove our convergence result in the next section. 
We begin with a first order derivative decay estimate. Throughout   this section  we will assume that  $M_t$ is a solution of the $\alpha$-CSF as in Proposition  \ref{prop-basic}. We will only use  the geometric parametrization in terms of arclength, i.e. we will assume that $u=u(s,t)=\bar \kappa^\alpha(s,t)$.  Here and in what follows  $B_r:=\{x\in \R^2 \,|\, |x|<r \}$ denotes  the Euclidean extrinsic ball of radius $r$. 

 \begin{prop}\label{prop-us}   Suppose  $0<u :=\bar \ka ^\alpha\le L $ on $B_1$ for $t \geq 0$. Then for every $\beta>0$ with $\beta< \min ( 1,\alpha^{-1})$, we have  \[|{u_s}| 
\le C(1+t^{-1/2})\,  u^{\beta} \quad \mbox{on }B_{1/2} \] for some $C=C(\alpha, \beta, L)$.

\begin{proof}
Let $\eta$ be a cut-off function with compact support, and denote by $\square \eta$ the term $$\square \eta\coloneqq \p_t \eta - \alpha u^{1-\fr{1}{\alpha}} \p_{ss} \eta$$
which is defined on the support of $\eta$. Consider the  continuous function $w:=t^2\eta^2 u_s^2u^{-2\beta} $ with a fixed constant $\beta>0$ satisfying the condition of the theorem. Then, on the set $\{w>0\}$
\begin{align*}
&\frac{\p_t w}{2w}=\frac{1}{t}+\fr{\p_t\eta}{\eta}+\frac{\p_tu_s}{u_s}-\beta\frac{\p_t u}{u}, &\frac{\p_s w}{2w}=\frac{\p_s\eta}{\eta}+\frac{\p_su_s}{u_s}-\beta\frac{\p_s u}{u}.
\end{align*}
We differentiate the second equation above again 
\begin{align*}
\frac{\p_{ss} w}{2w}-\frac{w_s^2}{2w^2}=\frac{\eta_{ss}}{\eta}-\frac{\eta_s^2}{\eta^2}+\frac{\p_{ss}u_s}{u_s}-\frac{u_{ss}^2}{u_s^2}-\beta\frac{\p_{ss} u}{u}+\beta\frac{u_s^2}{u^2}.
\end{align*}
Combining \eqref{eq-u}, \eqref{eq-us}, and  the equations above yields
\begin{align*}
\frac{\p_t w}{2w} - \alpha u^{1-\frac{1}{\alpha}}\frac{\p_{ss} w}{2w}+\alpha u^{1-\frac{1}{\alpha}}\frac{w_s^2}{2w^2}=&\fr{1}{t} +\frac{\square \eta}{\eta}+\alpha u^{1-\frac{1}{\alpha}} \frac{\eta_s^2}{\eta^2}+\frac{(\alpha-1)u^{-\frac{1}{\alpha}}u_su_{ss} +(2\alpha+2)u_su^{1+\frac{1}{\alpha}}}{u_s} \\
&+\alpha u^{1-\fr{1}{\alpha}}\frac{u_{ss}^2}{u_{s}^2} -\beta\alpha u^{1-\frac{1}{\alpha}} \frac{u^{1+\fr{2}{\alpha}}}{u} -\beta\alpha u^{1-\fr{1}{\alpha}}\frac{u_s^2}{u^2}.
\end{align*}
Given $T >0$, we assume that $w$ attains its nonzero maximum at $(p_0,t_0)$ for $t\in [0,T]$. Then, at the maximum point we have $w(p_0,t_0)>0$, and thus $t_0 > 0$ and the following holds
\begin{samepage}
\begin{align*}
0\leq \fr{1}{t_0} +\frac{\square \eta}{\eta}+(2\alpha+2-\beta\alpha)u^{1+\frac{1}{\alpha}}+\alpha u^{-\frac{1}{\alpha}}I,
\end{align*}
where 
\begin{align*}
I=\Big (1-\frac{1}{\alpha}\Big )\frac{u_{ss}}{u}+\frac{\eta_s^2}{\eta^2}+\frac{u_{ss}^2}{u_s^2}-\beta\frac{u_s^2}{u}.
\end{align*}
\end{samepage}
Moreover, since  $w_s=0$ at the maximum point $(p_0,t_0)$,  the following hold
\begin{align*}
&\frac{u_{ss}}{u} =\frac{u_s}{u}\frac{u_{ss}}{u_s}=\frac{u_s}{u}\Big(\beta\frac{u_s}{u}-\frac{\eta_s}{\eta}\Big)=\beta \frac{u_s^2}{u^2}-\frac{u_s}{u}\frac{\eta_s}{\eta}, && 
\fr{u^2_{ss}}{u_s^2} = \beta^2\fr{u_s^2}{u^2}+ \fr{\eta_s^2}{\eta^2} - 2\beta \fr{u_s}{u}\fr{\eta_s}{\eta}.
\end{align*}
Substituting these derivatives in $I$ and using   the condition $\beta<\fr{1}{\alpha}$ and Young's inequality, we obtain by direct calculation that at the maximum point $(p_0,t_0)$
\begin{align*}I =  -\beta\Big (\fr{1}{\alpha}-\beta\Big ) \fr{u_s^2}{u^2} +2\fr{\eta_s^2}{\eta^2}-  \Big (1-\fr{1}{\alpha}+2\beta\Big )\fr{u_s}{u}\fr{\eta_s}{\eta}\le  -\fr{\beta}{2}\Big (\fr{1}{\alpha}-\beta\Big ) \fr{u_s^2}{u^2} + C \fr{\eta_s^2}{\eta^2},
\end{align*} 
for some   $C=C(\alpha,\beta)>0$. This implies that there exist $C>0$ and $\delta>0$ which depend on $\alpha$ and  $\beta$ such that, at $(p_0,t_0)$, 
\begin{align}\label{eq-prop11}
\delta u^{1-\fr{1}{\alpha}} \fr{u^2_s}{u^2} = \delta \fr{u^2_s}{u^{2\beta}} u^{2\beta-1-\fr{1}{\alpha}} \le \fr{1}{t_0}+ \frac{\square \eta}{\eta}  +(2\alpha+2-\beta\alpha)u^{1+\frac{1}{\alpha}}  +C u^{1-\frac{1}{\alpha}}\fr{\eta_s^2}{\eta^2}.
\end{align}
Next, we define the cut-off function $\eta$ by
\begin{align*}
\eta=\big(1-|X(p,t)|^2\big)_+,
\end{align*}
and observe that we have $|\eta_s| \leq 2$, $|\langle X,n\rangle| \leq |X| \leq 1$ and
\begin{align}\label{eq-cutoffest}
|\square \eta|=\Big |\p_t \eta - \alpha u^{1-\fr{1}{\alpha}} \p_{ss} \eta \Big | =\Big |\alpha u^{1-\fr{1}{\alpha}} (2 + 2(1-\alpha^{-1})u^{\fr{1}{\alpha}}\la X,n\ra ) \Big |\leq C u^{1-\frac{1}{\alpha}},
\end{align}
for some $C=C(\alpha,L)$. Since $t_0\le T$, $0\le\eta\le 1$ and $|\eta_s|\le 2$, at the point $(p_0,t_0)$  we have 
\begin{align*}
\fr{u^2_s}{u^{2\beta}}t_0^2\eta^2 &\le C'\Big (T u^{1+\fr{1}{\alpha}-2\beta}  + u^{2+\fr{2}{\alpha}-2\beta} T^2+ u^{2-2\beta}T^2\Big )\le C' T^2(1+T^{-1}).
\end{align*} 
Here $C'= C'(\alpha,\beta, L)>0$ and this is possible because $\beta \le\min(\alpha^{-1},1)$. 
At any point $(p,T)$, \[\fr{u_s^2}{u^{2\beta}}\eta^2\le \fr{w(p_0,t_0)}{T^2}\le C' (1+ T^{-1}).\]
Therefore, replacing $T$ by $t$ yields the desired result.
\end{proof}
 \end{prop}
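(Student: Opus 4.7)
The plan is to run a Bernstein-type maximum principle argument on the auxiliary quantity
\[
w := t^2\,\eta^2\,\frac{u_s^2}{u^{2\beta}},
\]
where $\eta$ is an extrinsic spatial cutoff (supported in $B_1$, equal to $1$ on $B_{1/2}$). The prefactor $t^2$ absorbs the initial-time singularity and produces the $(1+t^{-1/2})$ behaviour on the final estimate, while the weight $u^{-2\beta}$ builds in the desired scaling $|u_s|\lesssim u^{\beta}$.

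First I would compute $\square w := \partial_t w - \alpha u^{1-1/\alpha}\partial_{ss}w$ by expanding $\partial_t w/w$, $\partial_s w/w$, $\partial_{ss}w/w$ as sums of logarithmic derivatives of $t$, $\eta$, $u_s$, $u$, and then substituting the evolutions \eqref{eq-u} and \eqref{eq-us} for $u$ and $u_s$. At a positive interior maximum $(p_0,t_0)\in\{w>0\}$ on $[0,T]$, the usual first- and second-order conditions $\partial_t w\ge 0$, $\partial_s w=0$, $\partial_{ss}w\le 0$ apply. The identity $\partial_s w = 0$ gives the algebraic relation
\[
\frac{u_{ss}}{u_s} \;=\; \beta\,\frac{u_s}{u}\;-\;\frac{\eta_s}{\eta},
\]
which is the crucial tool for eliminating the indefinite-sign term $(\alpha-1)u^{-1/\alpha}u_s u_{ss}$ coming from \eqref{eq-us}. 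Substituting this relation back into the evolution inequality and collecting the $u_s^2/u^2$ coefficients, the principal term becomes $-\beta(\alpha^{-1}-\beta)\,u_s^2/u^2$, which is strictly negative precisely because of the assumption $\beta<\alpha^{-1}$; a Young-type estimate absorbs the cross term $(u_s/u)(\eta_s/\eta)$ into this quadratic and into $C\eta_s^2/\eta^2$.

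For the cutoff I would take $\eta=(1-|X|^2)_+$, so that $|\eta_s|\le 2|X|\le 2$. The key point is then to compute $\square\eta$ using the evolution \eqref{eqn-X2} of $|X|^2$; this gives $|\square\eta|\le C(\alpha,L)\,u^{1-1/\alpha}$ on the support of $\eta$, exploiting the global upper bound $u\le L$. Plugging all this back, at $(p_0,t_0)$ one obtains an inequality of the shape
\[
\delta\, u^{1-\tfrac{1}{\alpha}}\frac{u_s^2}{u^{2\beta}}\,u^{2\beta-1-\tfrac{1}{\alpha}} \;\le\; \frac{1}{t_0}+\frac{|\square\eta|}{\eta}+C u^{1+\tfrac{1}{\alpha}}+C u^{1-\tfrac{1}{\alpha}}\frac{\eta_s^2}{\eta^2},
\]
and multiplying through by $t_0^2\eta^2$ while using $u\le L$ and $\beta\le\min(\alpha^{-1},1)$ yields $w(p_0,t_0)\le C'(T^2+T)$. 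Since $\eta\equiv 1$ on $B_{1/2}$, evaluating at time $T$ produces the stated estimate.

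The main obstacle is the indefinite term $(\alpha-1)u^{-1/\alpha}u_s u_{ss}$ in \eqref{eq-us}: without the trick of using $\partial_s w=0$ to replace $u_{ss}$, there is no sign control at the maximum. The second, subtler point is that the extrinsic cutoff must be compatible with the fast-diffusion operator $\alpha u^{1-1/\alpha}\partial_{ss}$; the evolution of $|X|^2$ in \eqref{eqn-X2} gives exactly the right power $u^{1-1/\alpha}$ for $\square\eta$ so that all error terms collapse into a bounded quantity once $u\le L$ is invoked, which is why the argument closes for $\beta<\min(1,\alpha^{-1})$ but would fail at the borderline.
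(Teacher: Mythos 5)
Your proposal is correct and follows essentially the same argument as the paper: the identical Bernstein quantity $w=t^2\eta^2 u_s^2u^{-2\beta}$, the use of $\p_s w=0$ at the maximum to eliminate the indefinite $u_su_{ss}$ term with the sign coming from $\beta<\alpha^{-1}$, the extrinsic cutoff $\eta=(1-|X|^2)_+$ controlled via \eqref{eqn-X2}, and the final absorption using $u\le L$ and $\beta\le\min(1,\alpha^{-1})$. No substantive differences to note.
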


 \begin{prop}\label{prop-lowerbddfast} For $\alpha\in(0,1)$, for each $t>0$, \[\liminf_{|X|\to \infty} |X|^{2} \, \bar\ka^{1-\alpha}=\liminf_{|X|\to \infty} |X|^{2}\, u^{\fr{1}{\alpha}-1} \ge \fr{ 2\alpha(1+\alpha)}{1-\alpha}t . \] Here, $|X|=|X(p,t)|$ is the extrinsic distance from the origin. More generally, this is uniform in $t$ for all compact time interval which is away from $t=0$. In other words, for $0<\tau_1<\tau_2<\infty$ and $\e>0$, there is $R>0$ such that \[ |X(p,t)|^{2}\, \bar\ka^{1-\alpha}(p,t)\ge \fr{ 2\alpha(1+\alpha)}{1-\alpha}t-\e\] for all $(p,t)$ with $t\in[\tau_1,\tau_2]$ and $|X(p,t)|\ge R$.
 \begin{proof}We will follow the idea and the proof of \cite{HP} Theorem 2.4, where the same inequality is shown for a solution of the Euclidean fast diffusion equation $w_t =\Delta w^\alpha$.  Recall that the curvature $\bar \kappa$ satisfies the equation 
$\bar \kappa_t = (\bar \kappa)_{ss} + \bar \kappa^\alpha$.  Since here we are  on a   Riemannian manifold (though it's 1D) and the metric is changing with respect to time, we need to modify the proof.  
 
 Let us define the  constant $b$ by $b^{-1}:= \fr{ 2\alpha(1+\alpha)}{1-\alpha}$ and 
 consider then  function $U:\R\times (0,\infty)\to \R$ defined by \[U(x,t)={t^{-\fr{1}{\alpha+1}}}(1+ b |x|^2t^{-\fr{2}{\alpha+1}})^{-\fr{1}{1-\alpha}}.\] Then, it can be directly checked that \[U_\mu(x,t):=\mu^{\fr{2}{1-\alpha}}U(t,\mu x)={t^{-\fr{1}{\alpha+1}}}(\mu^{-2}+ b |x|^2t^{-\fr{2}{\alpha+1}})^{-\fr{1}{1-\alpha}}\] are solutions of the 1D fast diffusion equation $f_t = (f^{\alpha})_{xx}$ for all parameters $\mu>0$. 
 
 \smallskip
\noindent {\em Case 1:} Assume first that  our solution of the $\alpha$-CSF is smooth for $t\ge0$ and has the positive and bounded curvature $0<\ka^\alpha\le L$. Pick a point $p\in N$. Then the intrinsic distance function $s_p(q,t)=\operatorname{dist}_{g(t)}(p,q) \ge0$ is smooth away from $p$ for $t\ge0$. Moreover, $ds=\sqrt {g_{11}} dx$ implies that $\fr{\p}{\p t} ds = \fr{-2\bar \ka^{\alpha+1} g_{11}}{2\sqrt{g_{11}}}dx = -\bar \ka^{\alpha+1} ds$.
Thus, for a curve $\gamma:[0,1]\to M$ which joins $p$ to $q$, we have\[\fr{\p}{\p t} s_p(q,t) = \int_0^1 -\bar\ka^{\alpha+1}|\dot \gamma(\tau)|d\tau 
  \ge \int -\bar \ka^{\alpha} |d\theta| \ge -L\pi.\]
Define $\bar U_\mu :M\times (0,\infty)\to \R$ by \[\bar U_\mu (q,t) = U_\mu(s_p(q,t)+L\pi t, t)= U_\mu (\bar s_p(q,t),t),
\qquad \bar s_p(q,t):= s_p(q,t)+L\pi t.  \]
One can easily check using the  chain rule and the fact that $\p_t \bar s_p =\p_t s_p +L\pi\ge0$ and $\p_x U_\mu \le 0$, that \[\p_t \bar U_\mu = \p_t \bar s_p\p_xU_\mu+ \p_t U_\mu \le \p_t U_\mu =(U_\mu^\alpha)_{xx}= (\bar U_\mu^\alpha)_{ss}. \] i.e. $\p_t\bar U_\mu  -(\bar U^\alpha_\mu)_{ss} \le 0$ away from the non-smooth point $p$. From this point, we can follow the proof of Theorem 2.4 \cite{HP} using these barriers $\bar U_\mu$. 
Let us choose two different points $p_1$, $p_2\in N$ such that $s_p(p_i,0)=1$.  For a fixed $T>0$, let's denote $\delta = \delta(T) >0$ by \[\delta = \fr{1}{2}\min_{i=1,2,\,t\in[0,T]}\, \bar\ka(p_i,t).\]
We can find  small $\mu>0$ such that \[\bar U_\mu(q,t) \le \delta\quad\mbox{when }0\le t\le T,\,  q\in M\setminus B_{g(0)} (1,p). \]This is possible because \[\bar U_\mu(q,t) = U_\mu( \bar s_p(q,t),t )\le U_\mu(s_p(q,0),t)\le U_\mu(1,t) .\] Recall that $\bar \ka_t = (\bar\ka^{\alpha})_{ss} + \bar\ka^{\alpha+2} \ge (\bar\ka^{\alpha})_{ss}$. 
Since $\bar U_\mu(q,0) = 0$, by the  comparison principle  (c.f. Lemma 3.4 \cite{HP}),  \[\bar U_{\mu} (q,t) \le \bar \ka(q,t)  \quad \mbox{for }0\le t\le T,\,  q\in M\setminus B_{g(0)} (1,p).\]
The proof of Lemma 3.4 \cite{HP} uses the Kato's inequality \[\Delta (f^+)\ge (\Delta f)^+ \quad \mbox{in distribution sense} . \]At each fixed time slice and thus fixed metric, this is again true in our (1D) Riemannian case. Thus the proof actually works in our setting,  thus the   comparison principle holds. 
Therefore,  comparing   with our barrier $\bar U_\mu$ yields  that   for each $0<\tau_1<\tau_2\le T$ and $\e>0$, there is $R>0$ such that   \[ \bar s_p^{{2}} (q,t) \, \bar\ka^{1-\alpha} (q,t) \ge \bar s_p^{{2}} (q,t) \, \bar U^{1-\alpha}_{\mu}(q,t) \ge \Big (tb^{-1}\Big )-\e\]
holds, for all $(q,t)$ with $t\in[\tau_1,\tau_2]$ and $\bar s_p(q,t)\ge R$. Also, observe \[\fr{\bar s_p(q,t)}{|X(q,t)|} = \fr{s_p(q,t)+ L\pi t}{|X(q,t)|} \to 1 \] uniformly for $t\in[0,T]$ as $|X(q,t)|\to \infty$. This follows from   the fact that $M_t$ is convex, it is  located between  to two parallel lines, it is asymptotic to these parallel lines and $|X(p,t)-X(p,0)| \le Lt$. We can choose $T>0$ arbitrary large and repeat the same argument to conclude that the proposition holds, under the extra assumption that $0 < \bar \kappa^\alpha \leq L$. 

\smallskip

\noindent {\em Case 2:} For a general solution of the $\alpha$-CSF which is not smooth up to $t=0$ or does not satisfy the  curvature bound $0<\bar\ka^\alpha\le L$, we may apply the previous proof on $t\in [\tau,\infty)$,  for small fixed $\tau>0$ and conclude that for $0<\tau<\tau_1<\tau_2<\infty$ and $\e>0$, there is $R>0$ such that \[ |X(p,t)|^{2}\, \bar\ka^{1-\alpha}(p,t)\ge b^{-1}(t-\tau)-\e\] for all $(p,t)$ with $t\in[\tau_1,\tau_2]$ and $|X(p,t)|\ge R$. We may  chose  $\tau$ small enough so that $b^{-1}\tau \le \e/2$,  finishing  the proof.\end{proof}
 \end{prop}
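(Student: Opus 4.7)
The plan is to construct an explicit Barenblatt-type subsolution for the curvature $\bar\ka$ and compare it from below, adapting the argument of Herrero-Pierre for the Euclidean fast-diffusion equation $w_t=\Delta w^\alpha$. The main technical subtlety is that the intrinsic metric on $M_t$ evolves in time, so the arc-length coordinate $s$ is not fixed.

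First, observe from \eqref{eqn-cur} that $\bar\ka_t = (\bar\ka^{\alpha})_{ss} + \bar\ka^{\alpha+2}$, so dropping the non-negative reaction term makes $\bar\ka$ a supersolution of the pure one-dimensional fast-diffusion equation $f_t = (f^{\alpha})_{ss}$. A one-parameter family of self-similar subsolutions on $\R$ is
\[U_\mu(x,t) = t^{-1/(\alpha+1)}\bigl(\mu^{-2} + b|x|^2 t^{-2/(\alpha+1)}\bigr)^{-1/(1-\alpha)}, \qquad b^{-1} := \fr{2\alpha(1+\alpha)}{1-\alpha},\]
and a direct computation shows these solve $f_t=(f^{\alpha})_{xx}$ for every $\mu>0$. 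Their spatial tail is exactly $|x|^{-2/(1-\alpha)}$, which yields the lower bound $b^{-1} t$ on $|x|^2 f^{1-\alpha}$ as $|x|\to\infty$. The task is thus to transport this lower bound from the flat Euclidean $\R$ to our evolving curve $M_t$.

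Assume first $M_t$ is smooth at $t=0$ and satisfies $0<\bar\ka^\alpha\le L$. Fix a basepoint $p\in N$ and let $s_p(q,t)=\operatorname{dist}_{g(t)}(p,q)$. Since $\p_t\,ds = -\bar\ka^{\alpha+1}\,ds$ and $\int \bar\ka^\alpha |d\theta|\le L\pi$ along a minimizing geodesic, one gets $\p_t s_p\ge -L\pi$. So the modified distance $\bar s_p := s_p + L\pi t$ is non-decreasing in $t$. Set $\bar U_\mu(q,t):=U_\mu(\bar s_p(q,t),t)$. Because $\p_x U_\mu\le 0$, the chain rule gives
\[\p_t \bar U_\mu - (\bar U_\mu^\alpha)_{ss} \le \p_t U_\mu - (U_\mu^\alpha)_{xx} = 0\]
away from $p$, so $\bar U_\mu$ is a subsolution on $M_t\setminus\{p\}$. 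On $[0,T]$ and on the annular region $M_t\setminus B_{g(0)}(p,1)$, choose $\mu$ small enough that $U_\mu(1,t)\le \delta$ where $\delta = \tfrac12\min_{i=1,2,\,t\in[0,T]} \bar\ka(p_i,t)$ with $p_1,p_2$ on $\p B_{g(0)}(p,1)$; then $\bar U_\mu\le \bar\ka$ on the parabolic boundary. The Kato inequality $\Delta(f^+)\ge (\Delta f)^+$ holds distributionally on each fixed (one-dimensional) time slice, so the distributional comparison argument of Herrero-Pierre applies and yields $\bar U_\mu\le\bar\ka$ on the complement. Rearranging gives $\bar s_p^{\,2}\,\bar\ka^{1-\alpha}\ge b^{-1}t - \e$ for $\bar s_p$ sufficiently large.

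To finish, I would pass from $\bar s_p$ to the extrinsic distance $|X|$: convexity of $M_t$, its confinement to a fixed slab, and the easy bound $|X(p,t)-X(p,0)|\le Lt$ force $\bar s_p/|X|\to 1$ uniformly in $t\in[\tau_1,\tau_2]$ as $|X|\to\infty$, so $\bar s_p$ can be replaced by $|X|$ in the final inequality. For a general solution (not smooth at $t=0$ or without the upper bound on $\bar\ka^\alpha$), restart the flow from a small time $\tau>0$: by Proposition \ref{prop-curvbdd} the solution is smooth with bounded curvature for $t\ge \tau$, and the above gives $|X|^2\bar\ka^{1-\alpha}\ge b^{-1}(t-\tau)-\e$; sending $\tau\downarrow 0$ closes the argument. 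The main obstacle is the comparison step: because the intrinsic metric is shrinking in $t$, one cannot apply a Euclidean comparison principle directly. The drift correction $+L\pi t$ in $\bar s_p$ is introduced precisely to absorb the shrinking and keep $\bar U_\mu$ a subsolution, and one has to verify slice-by-slice that Kato's inequality justifies distributional comparison in this Riemannian (though one-dimensional) setting.
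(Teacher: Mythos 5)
Your proposal is correct and follows essentially the same route as the paper: the Herrero--Pierre Barenblatt subsolutions $U_\mu$, the drift-corrected distance $\bar s_p = s_p + L\pi t$ to handle the shrinking metric, slice-wise Kato's inequality for the distributional comparison with $\bar\ka$ (viewed as a supersolution of $f_t=(f^\alpha)_{ss}$), the passage from $\bar s_p$ to $|X|$ via convexity and the slab confinement, and the restart at a small time $\tau$ for the general case. No substantive differences or gaps.
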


In the range of exponents $\alpha \in (0,1)$ we have  the following  global and somewhat   improved estimate than Proposition \ref{prop-us}.
 \begin{prop}\label{prop-usfast} For a fixed $\alpha \in (0,1)$, suppose   $0<u:=\bar \ka ^\alpha\le L $,   for $t\ge 0$. Then, there exists some $C=C(\alpha,L,M_0)$ such that \[|{u_s}| 
\le C(1+ \frac 1{\sqrt{t}}) \, u^{\fr{1}{2}\big (1+\fr{1}{\alpha}\big )} \quad \] holds for $t>0$.
\begin{proof} {Given $T$ and $\tau$ with $0<\tau<T$}, we set  $w:=(t-\tau)^2\eta^2 u_s^2u^{-2\beta} $ for $t \in (\tau,T]$ with the fixed exponent $\beta:= \fr{1}{2}\big (1+\fr{1}{\alpha}\big )$ and a smooth cut off function $\eta$. We are going to choose $\eta$ in a different way to use the asymptotic bound in Proposition \ref{prop-lowerbddfast}. Let us fix a usual cut off function $\xi : [0,\infty) \to \R$ such that \[0\le\xi\le1,\quad \xi=1\mbox{ on }[0,1/2],\quad\operatorname{supp}\xi \subset [0,1],\quad 0\le-\xi',\, |\xi''|\le C.\]
Define $\eta(p,t) := \xi \big ( \frac{|X(p,t)|}R \big )$ for $R\gg 1$. Then the following holds by direct computation. 
\begin{claim}\label{claim-3}
\bee
\eta_s^2 \leq \frac{C}{R^2}, \qquad  \quad \square \eta \coloneqq  \p_t \eta - \alpha u^{1-\fr{1}{\alpha}} \p_{ss} \eta  \le C \, \Big ( \frac{u^{1-\frac{1}{\alpha}}}{R^2}+1\Big )
\eee
for some $C=C(\alpha, L, M_0)$.
\begin{proof}[Proof of Claim \ref{claim-3}] We begin by observing that  $\p_s|X|= \fr{|X|^2_s}{2|X|} = \la \fr{X}{|X|},     \p_s X \ra $
implying  $  |\p_s|X||\le 1$, thus   $\eta_s^2=\Big ( \fr{|X|_s}{R}\xi'\Big )^2 \le R^{-2}C$ yields  the first estimate.  Next, by
the chain rule and \eqref{eqn-X2}, we compute 
 \[\ba \square \eta=(\p_t-\alpha u^{1-\fr{1}{\alpha}} \p_{ss})\eta&= \fr{\xi'}{R} (\p_t-\alpha u^{1-\fr{1}{\alpha}} \p_{ss})|X| -  \alpha u^{1-\fr{1}{\alpha}}\fr{\xi''}{R^2}(\p_s|X|)^2\\
&= \alpha u^{1-\fr{1}{\alpha}} \fr{\xi'}{R} \, \Big ( \fr{ (\p_t-\alpha u^{1-\fr{1}{\alpha}} \p_{ss})|X|^2}{\alpha u^{1-\fr{1}{\alpha}}\, 2|X|} + \fr{(\p_s(|X|^2))^2}{4(|X|^2)^{3/2}}  \Big ) -  \alpha u^{1-\fr{1}{\alpha}}\fr{\xi''}{R^2}(\p_s|X|)^2\\
&=\alpha u^{1-\fr{1}{\alpha}} \Big ( \fr{\xi'}{R}\fr{(\p_s|X|)^2-1+(\alpha^{-1} -1) \bar \ka \la X, n\ra }{|X|}- \fr{\xi''}{R^2}(\p_s|X|)^2 \Big ). \ea\]
Note that  since  since $\xi'(|X|/R) =0$ for $|X|/R<\frac{1}{2}$, we have  ${\displaystyle  \fr{|\xi'|}{R}\fr{1}{|X|}\leq  \fr{2|\xi'|(|X|/R)}{R} \fr{1}{|X|}\le C R^{-2}}$. Therefore, $\bar \kappa =u^{\frac{1}{\alpha}}$, $|\xi''|\leq C$  and $\langle X,n \rangle \leq |X|$ yield
\begin{align*}
\square \eta \leq C\frac{u^{1-\frac{1}{\alpha}}}{R^2} +(1-\alpha)u^{1-\frac{1}{\alpha}} \, \bar  \kappa\,  \frac{\xi'}{R}\frac{\langle X,n \rangle}{|X|}\leq C\, \frac{u^{1-\frac{1}{\alpha}}}{R^2}+(1-\alpha) \, u \frac{|\xi'|}{R}.
\end{align*}
By using $u \leq L$, $R \geq 1$, $|\xi'| \leq C$, we obtain the second estimate in the claim.
\end{proof}
\end{claim}

We will now continue with the proof of the proposition.  Assume that  a nonzero maximum of $w(p,t)$ on $t\in[\tau,T]$ is obtained at $(p_0,t_0)$ with $t_0\in (\tau,T]$. 
Since the proof of Proposition \ref{prop-us} does not make use of the specific $\eta$ until \eqref{eq-prop11},  except that it has a compact support, we may use the calculation in  \eqref{eq-prop11} and combine  it  with the above claim 
to conclude that at the point  $(p_0,t_0)$ we have 
\begin{align*}
\delta u^{1-\fr{1}{\alpha}} \fr{u^2_s}{u^2} = \delta \fr{u^2_s}{u^{2\beta}} \le \fr{1}{t_0-\tau}+\frac{C}{\eta}\Big (\frac{u^{1-\frac{1}{\alpha}}}{R^2}+1 \Big ) +C  u^{1+\fr{1}{\alpha}} +\frac{C}{\eta^2}\frac{u^{1-\frac{1}{\alpha}}}{R^2}, 
\end{align*} for some $C=C(\alpha,L,M_0)$. Therefore,  multiplying  the last inequality by $(t_0-\tau)^2 \eta(p_0,t_0)^2$ and using $0<t_0-\tau\le t_0\le T$, $\eta\le1$, $\beta:= \frac 12 \, \big (1 +\frac 1\alpha \big ) $  yield   that for $t \in [\tau,T]$ and $|x| \leq R/2$  the following holds
\begin{align*}
\fr{u^2_s}{u^{2\beta}}(t-\tau)^2 =: w(p,t) \leq w(p_0,t_0):=  \fr{u^2_s}{u^{2\beta}} \eta^2(t_0-\tau)^2 \le  C\, T\Big (1+ T+ \sup_{\operatorname{supp} \eta,\, t=t_0}\fr{u^{1-\fr{1}{\alpha}}}{R^2}t_0 \Big ),
\end{align*}
where $C=C(\alpha,L,M_0)$ but independent of $R>0$ and $\tau>0$. 

Now, we apply Proposition \ref{prop-lowerbddfast} with $\tau_1=\tau$, $\tau_2=T$, $\epsilon=\frac{\alpha(1+\alpha)}{1-\alpha}t_0 $, which implies that there  exists some $R_0>0$ such that $|X|^2u^{\frac{1}{\alpha}-1} \geq \frac{\alpha(1+\alpha)}{1-\alpha}t_0$ for $|X|(p,t_0) \geq R_0$. Combining this with the above estimate yields that   for $R \geq R_0$
\begin{align*}
 \sup_{R_0\leq |X| < R}\fr{u^{1-\fr{1}{\alpha}}(p,t_0)}{R^2}t_0 \leq   \sup_{R_0\leq |X| < R}\frac{t_0}{|X|^2u^{\frac{1}{\alpha}-1}(p,t_0)}\leq \frac{1-\alpha}{\alpha(1+\alpha)}. 
\end{align*}
We conclude that  if $t \in [\tau,T]$ and $|x| \leq R/2$ with $R \geq R_0$ then we have
\begin{align*}
\fr{u^2_s}{u^{2\beta}}(t-\tau)^2 \leq CT\Big (1+ T+  KT R^{-2}\Big ), \quad \text{where}\; K=\sup_{|x|(p,t_0) \leq R_0}u^{1-\frac{1}{\alpha}}(p,t_0).
\end{align*}
Passing  $\tau \to 0$ and $R \to +\infty$ and then setting   $t=T$, we finally obtain the bound 
\begin{align*}
\fr{u^2_s}{u^{2\beta}}T^2 \leq C\, T^2 (1+ T^{-1}),
\end{align*}
which holds for all  $T>0$. By replacing $T$ by $t$, we have the desired result.
\end{proof}
 \end{prop}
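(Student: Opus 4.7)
The plan is to run essentially the same maximum-principle argument as in Proposition \ref{prop-us}, applied to $w := (t-\tau)^2 \eta^2 u_s^2 u^{-2\beta}$, but with three changes dictated by the fast-diffusion range $\alpha \in (0,1)$. First, I take the specific exponent $\beta := \frac{1}{2}(1+\frac{1}{\alpha})$, which belongs to $(1,1/\alpha)$ for $\alpha \in (0,1)$; this choice exactly matches the power of $u$ we want on the right, and it still satisfies $\beta < 1/\alpha$, which is the condition truly needed in the Young's-inequality step of the previous proof, but it violates $\beta < 1$, so the cut-off error term will no longer be absorbed by the $u$-bound alone. Second, I replace the intrinsic cut-off of Proposition \ref{prop-us} by an extrinsic one $\eta(p,t) := \xi(|X(p,t)|/R)$ with $\xi \in C^\infty_c([0,1])$, so that the support of $\eta$ exhausts $\R^2$ as $R \to \infty$. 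Third, and crucially, I use the fast-diffusion lower bound of Proposition \ref{prop-lowerbddfast} to control the resulting cut-off error.

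Computing with \eqref{eqn-X2} and the bound $|\p_s|X|| \le 1$ yields $|\eta_s|^2 \le C/R^2$ and
\begin{equation*}
\square \eta \le C\Big(\frac{u^{1-1/\alpha}}{R^2} + 1\Big),
\end{equation*}
with $C = C(\alpha,L,M_0)$. Running the argument of Proposition \ref{prop-us} through \eqref{eq-prop11} (which only requires $\beta < 1/\alpha$) at an interior maximum $(p_0,t_0)$ of $w$ then gives
\begin{equation*}
\delta\,\frac{u_s^2}{u^{2\beta}} \le \frac{1}{t_0-\tau} + \frac{C}{\eta}\Big(\frac{u^{1-1/\alpha}}{R^2} + 1\Big) + C u^{1+1/\alpha} + \frac{C}{\eta^2}\,\frac{u^{1-1/\alpha}}{R^2}.
\end{equation*}
After multiplying by $(t_0-\tau)^2\eta^2 \le T^2$ and using $u \le L$, every term is controlled by a constant $C(\alpha,L,M_0,T)$ \emph{except} for the residue $T^2\, u^{1-1/\alpha}(p_0,t_0)/R^2$.

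The hard part is bounding this last residue uniformly in $R$. Since $1 - 1/\alpha < 0$ for $\alpha \in (0,1)$ and $u$ is not known to be bounded below, the pointwise bound $u \le L$ is useless; this is precisely where Proposition \ref{prop-lowerbddfast} enters. It supplies, for each $0 < \tau_1 \le T$ and $\e > 0$, a radius $R_0$ depending on $M_0, T, \e, \alpha$ but \emph{not} on $R$, such that $|X|^2 u^{1/\alpha-1} \ge c(\alpha)\, t_0 - \e$ whenever $|X(p,t_0)| \ge R_0$. Splitting $\sup_{\operatorname{supp}\eta}$ into $\{|X| \le R_0\}$, where $u^{1-1/\alpha}$ is bounded by some $K = K(R_0,L,M_0)$ and the term contributes at most $KT^2/R^2 \to 0$, and $\{R_0 \le |X| \le R\}$, where $u^{1-1/\alpha}/R^2 \le (c(\alpha)t_0 - \e)^{-1}$ uniformly in $R$, shows the residue stays bounded. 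Letting $R \to \infty$ eliminates the cut-off globally, and then sending $\tau \downarrow 0$ and renaming $T$ by $t$ yields $u_s^2/u^{1+1/\alpha} \le C(1 + t^{-1})$, which is equivalent to the stated bound. The delicate point to verify is that the radius $R_0$ in Proposition \ref{prop-lowerbddfast} is uniform on compact time intervals bounded away from zero and is independent of the auxiliary parameter $R$; this uniformity is what makes the argument genuinely global rather than circular.
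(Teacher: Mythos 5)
Your proposal is correct and follows essentially the same route as the paper's own proof: the same test function $w=(t-\tau)^2\eta^2u_s^2u^{-2\beta}$ with $\beta=\frac12(1+\frac1\alpha)$, the same extrinsic cut-off $\xi(|X|/R)$ with the bounds $\eta_s^2\le C/R^2$ and $\square\eta\le C(u^{1-1/\alpha}R^{-2}+1)$, reuse of the computation of Proposition \ref{prop-us} up to \eqref{eq-prop11} (which indeed only needs $\beta<1/\alpha$), and the same splitting of the residual term via the fast-diffusion lower bound of Proposition \ref{prop-lowerbddfast} before sending $R\to\infty$ and $\tau\downarrow0$. The subtlety you flag, that $R_0$ from Proposition \ref{prop-lowerbddfast} is uniform on $[\tau,T]$ and independent of $R$, is exactly how the paper's argument closes as well.
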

 
 \begin{prop}\label{prop-uss} Let $\alpha\in[\fr{1}{3},\infty)$ be fixed. If $|u_s|\le K u^\beta$ for some $\beta \in (0,\fr{1}{\alpha})$ and $ L\geq \sup_{B_1} u$ for $t\geq 0$, then for every $\e \in(0, \min(1,\beta))$ we have  \[|u_{ss}| \le C(1+t^{-1/2})\, u^{{\min(\beta,2\beta-1)-\e}} \quad\mbox{ on } \, B_{1/2}  \] with some $C=C(\alpha,\beta,\e,K,L).$
 \begin{proof}
Let us define $  v := u^{1-\beta}(u_s+ 2Ku^\beta)= u^{1-\beta}u_s + 2Ku$.
 Note that \[Ku\le   v \le 3Ku.\]
 \begin{claim}\label{claim-solong} There is $C'=C'(\alpha,\beta,K)>0$ such that
\[\fr{|(\p_t - \alpha u^{1-\fr{1}{\alpha}} \p_{ss})  v |}{\alpha u^{1-\fr{1}{\alpha}}}\le C' \Big ({|  v_s|} u^{\beta-1}+   vu^{2\beta-2} +   vu^{\fr{2}{\alpha}} \Big ) \]and
\bee
\fr{|(\p_t - \alpha u^{1-\fr{1}{\alpha}} \p_{ss})  v_s |}{\alpha u^{1-\fr{1}{\alpha}}}\le C'\Big (|  v_s|\bigl(\fr{|  v_{ss}|}{|  v_s|}+ \fr{|  v_s|}{  v}\bigr) u^{\beta-1}+|  v_s|( u^{2\beta-2} + u^{\fr{2}{\alpha}})+  v(u^{2\beta}+u^{\beta-1+\fr{2}{\alpha}}+u^{3\beta-3}) \Big ).
\eee\end{claim}
Since a proof of this claim is   long, let us postpone it for the end of this proposition and assume   it is true.  Define $w:=   v_s^2   v^{-2\gamma}t^2\eta^2$ for some $0<\gamma<1$ to be determined later, where $\eta=(1-|X(p,t)|)^2_+$. Then, on the support $\{w>0\}$ we have
\begin{align*}
\frac{\p_t w}{2w}=\frac{1}{t}+\fr{\p_t\eta}{\eta}+\frac{\p_t  v_s}{  v_s}-\gamma\frac{\p_t   v}{  v}\quad
\mbox{ and }\quad
\frac{\p_s w}{2w}=\frac{\p_s\eta}{\eta}+\frac{\p_s  v_s}{  v_s}-\gamma \frac{\p_s   v}{  v}
\end{align*}
and 
\begin{align*}
\frac{\p_{ss} w}{2w}-\frac{w_s^2}{2w^2}=\frac{\eta_{ss}}{\eta}-\frac{\eta_s^2}{\eta^2}+\frac{\p_{ss}  v_s}{  v_s}-\frac{  v_{ss}^2}{  v_s^2}-\gamma\frac{v_{ss}}{  v}+\gamma\frac{  v_s^2}{  v^2}.
\end{align*}
Suppose that a nonzero maximum of $w$ on $t\in[0,T]$ is attained at $(p_0,t_0)$. At this point, 
\be\label{eqn-form1} 
\ba0 &\le \frac{\p_t w}{2w} - \alpha u^{1-\frac{1}{\alpha}}\frac{\p_{ss} w}{2w}+\alpha u^{1-\frac{1}{\alpha}}\frac{w_s^2}{2w^2}\\
& =\fr{1}{t_0}+\alpha u^{1-\fr{1}{\alpha}}\Big (\fr{(\p_t-\alpha u^{1-\fr{1}{\alpha}}\p_{ss})\eta_{ss}}{\alpha u^{1-\fr{1}{\alpha}}\eta} 
 +\fr{(\p_t - \alpha u^{1-\fr{1}{\alpha}} \p_{ss})  v_s }{\alpha u^{1-\fr{1}{\alpha}}  v_s} -\gamma \fr{(\p_t - \alpha u^{1-\fr{1}{\alpha}} \p_{ss})  v }{\alpha u^{1-\fr{1}{\alpha}}  v}\Big )\\
&\quad+\alpha u^{1-\fr{1}{\alpha}}\Big (\fr{  v_{ss}^2}{  v_s^2} -\gamma \fr{  v_s^2}{  v^2} +\fr{\eta_s^2}{\eta^2}\Big ).\ea 
\ee
Since $w_s=0$ at the maximum $(p_0,t_0)$, we have
\[\ba\fr{  v_{ss}^2}{  v_s^2} - \gamma \fr{  v_s^2}{  v^2} +\fr{\eta_s^2}{\eta^2} &=(-\gamma+\gamma^2)\fr{  v_s^2}{  v^2}+2\fr{\eta_s^2}{\eta^2} - 2\gamma \fr{\eta_s}{\eta } \fr{  v_s}{  v}\le -\fr{\gamma(1-\gamma)}{2} \fr{  v_s^2}{  v^2} +C\fr{\eta_s^2}{\eta^2} \ea\]and  by using $\gamma <1$
\[\Big |\fr{  v_{ss}}{  v_s}\Big |=\Big |\gamma \fr{  v_s}{  v} - \fr{\eta_s}{\eta}\Big |\le  \fr{  |v_s|}{  v} +  \fr{|\eta_s|}{\eta}.\]
Also, recall  \eqref{eq-cutoffest}. Then \eqref{eqn-form1} together with  the Claim, \eqref{eq-cutoffest} and
the last two estimates above yield 
\[\ba0 &\le\fr{1}{t_0}+\alpha u^{1-\fr{1}{\alpha}}\Big (\fr{C}{\eta} +\fr{|(\p_t - \alpha u^{1-\fr{1}{\alpha}} \p_{ss})  v_s |}{\alpha u^{1-\fr{1}{\alpha}}|  v_s|}+ \fr{|(\p_t - \alpha u^{1-\fr{1}{\alpha}} \p_{ss})  v| }{\alpha u^{1-\fr{1}{\alpha}}  v}- \fr{\gamma(1-\gamma)}{2} \fr{  v_s^2}{  v^2} +C\fr{\eta_s^2}{\eta^2}\Big )\\
&\le\fr{1}{t_0} + C u^{1-\fr{1}{\alpha}}\Big (  \fr{1}{\eta}   + \fr{|  v_s|}{  v}u^{\beta-1}+ \fr{|\eta_s|}{\eta} u ^{\beta-1}+u^{2\beta-2} + u^{\fr{2}{\alpha}}\Big . \\&\quad\quad\quad\quad\quad\quad\quad\quad\Big .+\fr{  v}{|  v_s|}(u^{2\beta}+u^{\beta-1+\fr{2}{\alpha}}+u^{3\beta-3})+ \fr{\eta_s^2}{\eta^2}\Big )-  \fr{
\alpha\gamma(1-\gamma)}{2} \fr{u^{1-\frac{1}{\alpha}}  v_s^2}{  v^2}\\
&\le\fr{1}{t_0} +C u^{1-\fr{1}{\alpha}}  \Big ( \fr{1}{\eta}  +u^{2\beta-2} + u^{\fr{2}{\alpha}}+\fr{  v}{|  v_s|}(u^{2\beta}+u^{\beta-1+\fr{2}{\alpha}}+u^{3\beta-3})+ \fr{\eta_s^2}{\eta^2}\Big )- \fr{\alpha\gamma(1-\gamma)}{4}\fr{u^{1-\frac{1}{\alpha}}  v_s^2}{  v^2} \\ 
&\le \fr{1}{t_0}+C u^{1-\fr{1}{\alpha}} \Big ( \fr{1}{\eta}  +u^{2\beta-2}+\fr{  v}{|  v_s|}u^{3\beta-3}+ \fr{\eta_s^2}{\eta^2}\Big )- \fr{\alpha\gamma(1-\gamma)}{4}\fr{u^{1-\frac{1}{\alpha}}  v_s^2}{  v^2}   \quad \mbox{(since }\beta<\fr{1}{\alpha}\le3\mbox{)}\ea \]  for some $C=C(\alpha, \beta, \gamma, K,L)$,  where the dependence of $C$ on $L$ takes place in the last inequality for the first time.
We conclude that at the maximum point $(p_0,t_0)$ the following holds
$$\fr{u^{1-\frac{1}{\alpha}}  v_s^2}{  v^2}  \leq C\, \Big (  \fr{1}{t_0}+ u^{1-\fr{1}{\alpha}} \big ( \fr{1}{\eta}  +u^{2\beta-2}+\fr{  v}{|  v_s|}u^{3\beta-3}+ \fr{\eta_s^2}{\eta^2}\big ) \Big ) .$$
Using this estimate we now conclude that 
\[\ba w(p_0,t_0)&=\bigg(\fr{  v_s^2}{  v^2}u^{1-\frac{1}{\alpha}}\bigg) u^{\frac{1}{\alpha}-1}v^{2(1-\gamma)}\,  t_0^2\, \eta^2\\ &\le C \Big (t_0 u^{\fr{1}{\alpha}-1} \eta^2+ t_0^2\eta^2    v^{2(1-\gamma)}\Big ( \fr{1}{\eta} + \fr{\eta_s^2}{\eta^2} +u^{2(\beta-1)}+\fr{  v}{|  v_s|}u^{3(\beta-1)}\Big ) \Big ) \\ \quad&\le C\Big ( Tu^{1+\fr{1}{\alpha}-2\gamma}+ T^2(u^{2(1-\gamma)}+u^{2(\beta-\gamma)})+ \fr{  v^{\gamma}}{|  v_s|t_0\eta} u^{3(\beta-\gamma)} T^3 \Big )\quad \mbox{(since }   v \le 3Ku \mbox{)}. \ea\]
If we choose our $\gamma\in(0,1)$ by $\gamma := \min(1,\beta) -\e$, then $1+\fr{1}{\alpha}-2\gamma$, $1-\gamma$, $\beta-\gamma\ge0$ as $\gamma<1$ and $\gamma<\beta<\fr{1}{\alpha}$. Thus 
\[w(p_0,t_0)\le C\Big ( T+ T^2+ \fr{1}{w^{1/2}(p_0,t_0)}T^3 \Big )\] with $C=C(\alpha, \beta,\e ,K,L)$.
By considering the  two cases $w(p_0,t_0)\ge T^2$ and $<T^2 $, we finally obtain the bound \[w(p_0,t_0)\le CT^2(1+T^{-1}) \]
implying that  at any point $(p,T)$, we have  \[\fr{|  v_s|}{u^\gamma}\, \eta  \le (3K)^\gamma\fr{|  v_s|}{ v^\gamma} \, \eta= (3K)^\gamma\fr{w^{1/2}(p,t)}{T}  \le C(1+T^{-1/2}) .\]
Note that $ v_s = u^{1-\beta} u_{ss} + (1-\beta)u^{-\beta} u_s^2 + 2Ku_s$. Hence, $|u_s|\leq Ku^{\beta}$ leads to  
\[\fr{u^{1-\beta} |u_{ss}|}{u^\gamma}\eta = \fr{|u_{ss}|\eta}{u^{\gamma-(1-\beta)}}  \le C(1+T^{-1/2}) + C \fr{u^{-\beta}u_s^2 + |u_s|}{u^{\gamma}} \le C(1+T^{-1/2})   .\]
We replace $T$ by $t$. Then,  $u^{\gamma-(1-\beta)} =  u^{\min(\beta,2\beta-1)-\e}$ yields the proposition.

\begin{proof}[Proof of Claim \ref{claim-solong}] During the proof of the claim we will frequently  use 
the inequalities $$Ku \le v \le 3K u \qquad \mbox{and} \qquad |u_s|\le K\, u^\beta$$ 
and we will denote by  $C$ various constants which depend  on $\alpha$, $\beta$ and $K$. 

Since
\[\ba(\p_t-\alpha u^{1-\alpha} \p_{ss})v&=  u_{s}(\p_t-\alpha u^{1-\alpha} \p_{ss})u^{1-\beta} + u^{1-\beta} (\p_t-\alpha u^{1-\alpha} \p_{ss})u_{s} -2\alpha u^{1-\fr{1}{\alpha}} (u^{1-\beta})_s( u_{s})_s \\
&\quad +2K(\p_t-\alpha u^{1-\alpha} \p_{ss})u\ea\]
to  show the first inequality in the claim, it is enough the  terms on the right hand side by  
\begin{align*}
C  u^{1-\fr{1}{\alpha}} \Big ({|  v_s|} u^{\beta-1}+   vu^{2\beta-2} +   vu^{\fr{2}{\alpha}}\Big  ) \leq Cu^{1-\fr{1}{\alpha}} \Big ({|  v_s|} u^{\beta-1}+   u^{2\beta-1} +   u^{\fr{2}{\alpha}+1} \Big ).
\end{align*}
We begin with observing\be \label{eq-vs} v_s = u^{1-\beta} u_{ss} + (1-\beta) u^{-\beta} u_s^2 +2K u_s \ee and thus $|u_s|\leq Ku^{\beta}$ and $Ku \le v$ yield \be \label{eq-uss2}|u_{ss}|\le u^{\beta-1}|v_s|+Cu^{-1}u_s^2+Cu^{\beta-1}|u_s| \le C(|v_s| u^{\beta-1}+ u^{2\beta-1} ).\ee 
Therefore, 
\begin{align}\label{eq-ues}
\fr{|(\p_t-\alpha u^{1-\alpha} \p_{ss})u|}{\alpha u^{1-\fr{1}{\alpha}}}  = \big|u_{ss} + u^{1+\fr{2}{\alpha}}\big| \le C\Big ({|  v_s|} u^{\beta-1}+   u^{2\beta-1} +   u^{\fr{2}{\alpha}+1} \Big ) 
\end{align}
also implying that 
\begin{align}\label{eq-uest}
u_{s}\fr{(\p_t-\alpha u^{1-\alpha} \p_{ss})u^{1-\beta}}{\alpha u^{1-\fr{1}{\alpha}}} &=  (1-\beta)u_s u^{-\beta}\fr{(\p_t-\alpha u^{1-\alpha} \p_{ss})u}{\alpha u^{1-\fr{1}{\alpha}}}-\alpha(1-\beta)(-\beta) u^{-1-\beta}u_s^3 \notag \\
& \leq C\Big ({|  v_s|} u^{\beta-1}+   u^{2\beta-1} +   u^{\fr{2}{\alpha}+1}\Big ).
\end{align}
In addition,  using \eqref{eq-uss2} we have 
\be \label{eq-usest}\ba u^{1-\beta} \fr{|(\p_t-\alpha u^{1-\alpha} \p_{ss})u_{s}|}{\alpha u^{1-\fr{1}{\alpha}}} \le Cu^{1-\beta} (\fr{|u_s u_{ss}|}{u} +u^{\fr{2}{\alpha}}|u_s|)\le C\Big ({|  v_s|} u^{\beta-1}+    u^{2\beta-1} +   u^{\fr{2}{\alpha}+1} \Big ) \ea \ee
and
\[|(u^{1-\beta})_s( u_{s})_s |= |(1-\beta)u^{-\beta} u_s u_{ss} |\le C |u_{ss}|\le C\Big ({|  v_s|} u^{\beta-1}+   u^{2\beta-1}  \Big ).\] 
Combining the above inequalities yieds the first estimate in the claim. 
\medskip

Next, by using \eqref{eq-vs} we compute
\[\ba &(\p_t -\alpha u^{1-\fr{1}{\alpha}} \p_{ss}) v_s=   2K (\p_t -\alpha u^{1-\fr{1}{\alpha}} \p_{ss})u_s\\&\quad+  u_{ss}(\p_t -\alpha u^{1-\fr{1}{\alpha}} \p_{ss})u^{1-\beta}-2\alpha u^{1-\fr{1}{\alpha}} (u^{1-\beta})_s (u_{ss})_s + u^{1-\beta} (\p_t -\alpha u^{1-\fr{1}{\alpha}} \p_{ss})u_{ss}  \\
&\quad +(1-\beta)\Big (u_s^2(\p_t -\alpha u^{1-\fr{1}{\alpha}} \p_{ss})u^{-\beta}-2\alpha u^{1-\fr{1}{\alpha}} (u^{-\beta})_s (u_{s}^2)_s + u^{-\beta} (\p_t -\alpha u^{1-\fr{1}{\alpha}} \p_{ss})u_{s}^2\Big ). \ea\]
To show the second inequality in the claim, we bound the seven terms above by $C  u^{1-\fr{1}{\alpha}} H$ where
\begin{align*}
H\coloneqq &u^{\beta-1}\Big (   |  v_{ss}|  +  |  v_s|^2 u^{-1}+|  v_s|( u^{\beta-1} + u^{1+\fr{2}{\alpha}-\beta})+  u^{\beta+2}+u^{\fr{2}{\alpha}+1}+u^{2\beta-1} \Big ) \\
\geq &\Big (|  v_s|\bigl(\fr{|  v_{ss}|}{|  v_s|}+ \fr{|  v_s|}{  v}\bigr) u^{\beta-1}+|  v_s|( u^{2\beta-2} + u^{\fr{2}{\alpha}})+  v(u^{2\beta}+u^{\beta-1+\fr{2}{\alpha}}+u^{3\beta-3}) \Big ).
\end{align*}
The inequality \eqref{eq-usest} implies that the first term is bounded \[ \fr{|(\p_t-\alpha u^{1-\alpha} \p_{ss})u_{s}|}{\alpha u^{1-\fr{1}{\alpha}}}\le C H. \] 
To proceed, we observe
\begin{align}\label{eq-H property}
u^{-\beta}\Big ( |v_s| u^{\beta-1}+ u^{2\beta-1} \Big )\Big ({|  v_s|} u^{\beta-1}+   u^{2\beta-1} +   u^{\fr{2}{\alpha}+1} \Big )\le CH.
\end{align} 
Hence, by using \eqref{eq-uss2} and \eqref{eq-uest} we estimate the second term, \[ \ba \fr{|u_{ss}(\p_t-\alpha u^{1-\alpha} \p_{ss})u^{1-\beta}|}{\alpha u^{1-\fr{1}{\alpha}}}\le CH.\ea\] 
Now, we differentiate \eqref{eq-vs} again so that we have
\[v_{ss}= u^{1-\beta}u_{sss}+3(1-\beta)u^{-\beta}u_s u_{ss} +(1-\beta)(-\beta)u^{-1-\beta}u_s^3 + 2Ku_{ss}. \] 
Thus, by using \eqref{eq-uss2} we have
\begin{align}\label{eq-usssest}
|u_{sss}|=& u^{\beta-1}\Big | v_{ss} -3(1-\beta)u^{-\beta}u_s u_{ss} -(1-\beta)(-\beta)u^{-1-\beta}u_s^3 + 2K u_{ss} \Big | \notag\\
\le & u^{\beta-1}|v_{ss}|+Cu^{\beta-1}|u_{ss}|+Cu^{3\beta-2}\leq CH.
\end{align} 
Hence, we can bound the third term, as  follows \[ \ba|(u^{1-\beta})_s (u_{ss})_s|\le C u^{-\beta}|u_su_{sss}|\le C |u_{sss}|\le CH.\ea\] 
We recall \eqref{eq-uss} to estimate the fourth term \[ \ba \fr{|u^{1-\beta}(\p_t-\alpha u^{1-\alpha} \p_{ss})u_{ss}|}{\alpha u^{1-\fr{1}{\alpha}}}\le Cu^{1-\beta} \Big (\fr{|u_s u_{sss}|}{u}+ \fr{u_{ss}^2}{u} + \fr{u_s^2|u_{ss}|}{u^2} +u^{-1+\fr{2}{\alpha}} u_s^2   +u^{\fr{2}{\alpha}}|u_{ss}| \Big ).\ea\]
This combined with \eqref{eq-uss2}, \eqref{eq-H property}, and \ref{eq-usssest} yields
\[ \ba \fr{|u^{1-\beta}(\p_t-\alpha u^{1-\alpha} \p_{ss})u_{ss}|}{\alpha u^{1-\fr{1}{\alpha}}}\le C|u_{sss}|+Cu^{\beta+\frac{2}{\alpha}}+Cu^{-\beta}|u_{ss}| \Big ( |u_{ss}|  + u^{2\beta-1}    +u^{\fr{2}{\alpha}+1} \Big )\leq CH .\ea\]
The fifth term is
\[u_s^2\fr{(\p_t -\alpha u^{1-\fr{1}{\alpha}} \p_{ss})u^{-\beta} }{\alpha u^{1-\alpha}}= (-\beta)u_s^2 u^{-\beta-1} \fr{(\p_t -\alpha u^{1-\fr{1}{\alpha}} \p_{ss})u}{\alpha u^{1-\alpha}}-\beta (1+\beta) u^{-\beta-2} u_s^4.\]
Therefore, by using \eqref{eq-ues} we have
\[u_s^2\fr{|(\p_t -\alpha u^{1-\fr{1}{\alpha}} \p_{ss}) u^{-\beta}| }{\alpha u^{1-\alpha}}\leq C  u^{\beta-1} \fr{|(\p_t -\alpha u^{1-\fr{1}{\alpha}} \p_{ss})u|}{ u^{1-\alpha}}+C u^{3\beta-2}  \leq CH.\]
The sixth term is bounded by \eqref{eq-uss2}
 \[|(u^{-\beta})_s (u_{s}^2)_s| = |-2\beta u^{-1-\beta}u_s^2u_{ss}|\le CH .\]
The last seventh term 
\begin{align*}
u^{-\beta}\fr{|(\p_t -\alpha u^{1-\fr{1}{\alpha}} \p_{ss})u_s^2 }{\alpha u^{1-\alpha}}= &\Big  |2u_s u^{-\beta} \fr{(\p_t -\alpha u^{1-\fr{1}{\alpha}} \p_{ss})u_s}{\alpha u^{1-\alpha}}-2u^{-\beta} u_{ss}^2\Big | \\
\le & C \fr{|(\p_t -\alpha u^{1-\fr{1}{\alpha}} \p_{ss})u_s|}{ u^{1-\alpha}}+Cu^{-\beta}u_{ss}^2.
\end{align*}
By using \eqref{eq-usest} and \eqref{eq-H property} we can estimate the first term above
\begin{align*}
\fr{|(\p_t -\alpha u^{1-\fr{1}{\alpha}} \p_{ss})u_s|}{ u^{1-\alpha}}& \leq Cu^{\beta-1}\Big ({|  v_s|} u^{\beta-1}+   u^{2\beta-1} +   u^{\fr{2}{\alpha}+1} \Big )\\
&= Cu^{-\beta}\Big (u^{2\beta-1}\Big )\Big ({|  v_s|} u^{\beta-1}+   u^{2\beta-1} +   u^{\fr{2}{\alpha}+1} \Big )\leq CH.
\end{align*}
Moreover, \eqref{eq-uss2} and \eqref{eq-H property} show $u^{-\beta}u_{ss}^2 \leq CH$.
We conclude from the above discussion that all seven terms on  $ (\p_t -\alpha u^{1-\fr{1}{\alpha}} \p_{ss}) v_s$
are bounded by $C u^{1-\frac 1\alpha} \, H $, finishing  the proof of the claim.  \end{proof}
 \end{proof}
 \end{prop}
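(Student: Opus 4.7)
The strategy mirrors the argument of Proposition \ref{prop-us} one derivative higher: rather than estimating $u_s$, I want to estimate $u_{ss}$ by exploiting the given first-derivative bound $|u_s|\le Ku^\beta$. A direct maximum-principle argument applied to $u_{ss}^2$ is awkward because $u_{ss}$ can change sign, so instead I would introduce an auxiliary strictly positive quantity comparable to $u$ whose first $s$-derivative encodes $u_{ss}$ at leading order. A natural candidate is
\[
v \;:=\; u^{1-\beta} u_s + 2Ku,
\]
which by the hypothesis satisfies $Ku \le v \le 3Ku$ and
\[
v_s \;=\; u^{1-\beta} u_{ss} + (1-\beta) u^{-\beta} u_s^2 + 2K u_s,
\]
so that control of $|v_s|$ combined with the first-derivative estimate immediately controls $|u_{ss}|$ up to powers of $u$ that are already absorbed.

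Next I would compute the parabolic operator $\square := \p_t - \alpha u^{1-1/\alpha}\p_{ss}$ applied to $v$ and to $v_s$, using the evolution equations \eqref{eq-u}, \eqref{eq-us}, \eqref{eq-uss}, together with the analogue for $u_{sss}$ obtained by differentiating \eqref{eq-uss} once more. The target is a schematic bound
\[
\frac{|\square v|}{\alpha u^{1-1/\alpha}} \;\lesssim\; |v_s|u^{\beta-1} + v\, u^{2\beta-2} + v\, u^{2/\alpha},
\]
together with a more elaborate analogue for $|\square v_s|$, written as $\alpha u^{1-1/\alpha}$ times a single master function $H$ that gathers every structurally distinct contribution (such as $|v_{ss}|u^{\beta-1}$, $|v_s|^2 u^{\beta-2}$, $|v_s|u^{2\beta-2}$, $v u^{3\beta-3}$, and crosses with $u^{2/\alpha}$). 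This step is mechanical but lengthy.

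With the evolution bounds in hand I set
\[
w := v_s^2\, v^{-2\gamma}\, t^2\, \eta^2, \qquad \gamma := \min(1,\beta) - \e,
\]
where $\eta = (1-|X|^2)_+$ is the extrinsic cutoff already used in Proposition \ref{prop-us} (its $\square\eta$ being controlled via \eqref{eqn-X2}). At a nonzero maximum $(p_0,t_0)$ of $w$, the critical-point relation $w_s=0$ gives
\[
\frac{v_{ss}}{v_s} \;=\; \gamma\,\frac{v_s}{v} - \frac{\eta_s}{\eta},
\]
which eliminates $v_{ss}$ from the inequality $\square w \ge 0$. After expansion, the coefficient of $v_s^2/v^2$ is $-\gamma(1-\gamma)$, strictly negative precisely because $0<\gamma<1$; this single good term must absorb everything else via Young's inequality. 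Multiplying the resulting inequality through by $v^{2(1-\gamma)} t_0^2 \eta^2$ and using $v\le 3Ku$, $\eta\le 1$, $t_0\le T$, I obtain $w(p_0,t_0)\le CT^2(1+T^{-1})$. Translating $|v_s|$ back to $|u_{ss}|$ via the identity above (and reabsorbing $u_s$ terms by the hypothesis) produces the claimed exponent $\gamma-(1-\beta) = \min(\beta,2\beta-1) - \e$, after setting $T=t$.

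The chief obstacle is the bookkeeping of step two: every term appearing in $\square v_s$ must be packaged so as to be dominated by the one good term $-\tfrac{\alpha\gamma(1-\gamma)}{2}\, u^{1-1/\alpha}\, v_s^2/v^2$ generated at the critical point. The threshold $\alpha \ge 1/3$ enters at exactly this stage, since $\beta < 1/\alpha \le 3$ is what keeps negative powers such as $u^{3\beta-3}$ compatible with the rest of the scheme; below this threshold some exponents point the wrong way and the single good term cannot dominate.
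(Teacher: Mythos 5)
Your proposal follows essentially the same route as the paper's own proof: the same auxiliary function $v=u^{1-\beta}u_s+2Ku$ with $Ku\le v\le 3Ku$, the same test quantity $w=v_s^2v^{-2\gamma}t^2\eta^2$ with the extrinsic cutoff and $\gamma=\min(1,\beta)-\e$, the same critical-point relation $v_{ss}/v_s=\gamma v_s/v-\eta_s/\eta$ absorbing everything into the good term $-\gamma(1-\gamma)v_s^2/v^2$, and the same final translation giving the exponent $\gamma-(1-\beta)=\min(\beta,2\beta-1)-\e$. You also correctly locate where $\alpha\ge\tfrac13$ (i.e.\ $\beta<\tfrac1\alpha\le 3$) is used, so the argument is correct and matches the paper.
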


We are finally ready to give the proof of our main estimate which will be used in the next section to control the boundary
terms. Note that while most of our previous estimates hold for $\alpha >0$ or $\alpha > 1/3$, for our estimate below $\alpha > 1/2$ is required. 
 
\begin{thm}\label{thm-decay} Assume that  $M_t$, $t \in [0,+\infty)$ is a solution  of the $\alpha$-CSF with  $\alpha >1/2$ and the initial data $M_0$ satisfying the assumptions of Proposition \ref{prop-basic}. 
Then there exists  $\e=\e(\alpha)>0$ so that  \[ \fr{|(\bar\ka^\alpha)_s(\bar\ka^\alpha)_t|}{\bar\ka}=\fr{|u_s\, u_{t}|}{u^{1/\alpha}}\le C(t_0,M_0) \, u^\e, \qquad \mbox{for}\,\, t > t_0. \]
\begin{proof} By equation \eqref{eqn-speed}, $u:=\bar \kappa^\alpha$ satisfies 
$$\fr{u_s u_t}{u^{1/\alpha}}  = u_s \, \alpha u^{1-\fr{1}{\alpha}} (u_{ss}+u^{1+ {2}/{\alpha}})\, u^{-\fr{1}{\alpha}}=\alpha \, \fr{u_su_{ss}}{u^{\frac 2\alpha -1} } + \alpha   u_s u^2.$$  By Proposition \ref{prop-curvbdd}, we have a uniform upper bound on $u$ for $t\ge t_0>0$ which combined with  Proposition \ref{prop-us} yields  desired bound for the second term. We will next  take care the first term. 

First, suppose $\alpha \in[1, \infty)$. Combining  Proposition \ref{prop-us} and   \ref{prop-uss} together with our  curvature bound (which is assumed in Proposition \ref{prop-uss}), implies that  for $\e\in(0,\fr{1}{\alpha})$, \[|u_su_{ss}| \le C(\alpha,\e,M_0, t_0) \, u^{\fr{3}{\alpha}-1-\e}\quad\text{ for }\,\, t\ge t_0>0,  \]
where we have used that for $\alpha \in[1, \infty)$, $\min (1/\alpha, 2/\alpha-1) = 2\alpha^{-1}-1$. 

When $\alpha \in (1/3,1)$, then $\min (1/\alpha, 2/\alpha-1) =  1/\alpha$, thus   Proposition \ref{prop-usfast}, \ref{prop-uss} and our  curvature bound imply that for  for $0<\e <1$, 
 \[|u_su_{ss}| \le C(\alpha,\e,M_0,t_0) \,  u^{1+\fr{1}{\alpha}-\e}. \]  
 Since $\fr{2}{\alpha}-1 <1+\fr{1}{\alpha}$ iff $\alpha >\fr{1}{2}$, we obtain the desired result for every $\alpha >\fr{1}{2}$.
\end{proof}\end{thm}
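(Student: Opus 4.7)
The starting point is to substitute the evolution equation \eqref{eq-u} for $u = \bar\kappa^\alpha$ and split
\[
\frac{u_s\, u_t}{u^{1/\alpha}} \;=\; \alpha\,\frac{u_s\, u_{ss}}{u^{2/\alpha-1}} \;+\; \alpha\, u_s\, u^2,
\]
so that it suffices to control each piece by a small positive power of $u$. Throughout I would use the uniform upper bound $u \le C(t_0, M_0)$ on $[t_0,\infty)$ from Proposition \ref{prop-curvbdd} to freely trade large positive powers of $u$ for smaller ones.

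The lower-order piece $\alpha u_s u^2$ is immediate: Proposition \ref{prop-us} gives $|u_s| \le C\, u^\beta$ for some fixed $\beta < \min(1, 1/\alpha)$, and since $u$ is bounded, $|u_s u^2| \le C\, u^\epsilon$ for any $\epsilon \le \beta$. The real content is in the first term, which I would handle by chaining the first-derivative bound into Proposition \ref{prop-uss} to get a second-derivative bound and then multiplying.

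In the slow-diffusion range $\alpha \ge 1$, pick $\beta < 1/\alpha$ close to $1/\alpha$: Proposition \ref{prop-us} supplies the hypothesis of Proposition \ref{prop-uss}, and since $\beta < 1$ one has $\min(\beta, 2\beta - 1) = 2\beta - 1$, giving $|u_{ss}| \le C\, u^{2\beta - 1 - \epsilon}$. Multiplying and letting $\beta \uparrow 1/\alpha$ yields $|u_s u_{ss}| \le C\, u^{3/\alpha - 1 - \epsilon}$, so the first term is bounded by $C\, u^{1/\alpha - \epsilon'}$, a positive power of $u$.

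The delicate range is $\tfrac12 < \alpha < 1$, where the bound from Proposition \ref{prop-us} is insufficient; here the improved fast-diffusion estimate of Proposition \ref{prop-usfast}, namely $|u_s| \le C\, u^{(1+1/\alpha)/2}$, is essential. Setting $\beta = (1 + 1/\alpha)/2$ one checks $\beta \in (1, 1/\alpha)$ and $2\beta - 1 = 1/\alpha > \beta$, so Proposition \ref{prop-uss} yields $|u_{ss}| \le C\, u^{\beta - \epsilon}$, and hence
\[
\frac{|u_s\, u_{ss}|}{u^{2/\alpha-1}} \;\le\; C\, u^{2\beta - \epsilon - (2/\alpha - 1)} \;=\; C\, u^{\,2 - 1/\alpha - \epsilon}.
\]
The exponent $2 - 1/\alpha$ is strictly positive precisely when $\alpha > \tfrac12$, and this is the sharp threshold that forces the hypothesis of the theorem. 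The main obstacle (and main point) is deciding which of Propositions \ref{prop-us}, \ref{prop-usfast} to invoke in each regime and verifying that the exponent bookkeeping leaves a positive power of $u$; the weaker Proposition \ref{prop-us} alone would only yield the range $\alpha > 1$, which is precisely why Proposition \ref{prop-usfast} was established first.
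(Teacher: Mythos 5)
Your proposal is correct and follows essentially the same route as the paper: the identical splitting of $u_su_t/u^{1/\alpha}$, the same chaining of Proposition \ref{prop-us} (for $\alpha\ge 1$) and Proposition \ref{prop-usfast} with $\beta=\tfrac12(1+\tfrac1\alpha)$ (for $\tfrac12<\alpha<1$) into Proposition \ref{prop-uss}, and the same exponent bookkeeping yielding the positive power $2-\tfrac1\alpha$ precisely when $\alpha>\tfrac12$. Only your closing aside is slightly off: Proposition \ref{prop-us} alone, with $\beta$ near $\min(1,1/\alpha)$, would already give a positive exponent $3-\tfrac2\alpha$ for $\alpha>\tfrac23$ (not just $\alpha>1$), but this does not affect the validity of your argument.
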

\begin{corollary}\label{cor-decay}Under the same conditions as in Theorem \ref{thm-decay} and for any $\alpha>1/2$, there is $\e'(\alpha)>0$ such that 
\[|(\ka^\alpha)_\theta(\ka^\alpha)_t| \le C(t_0,M_0) (\ka^\alpha )^{\e'}\quad \text{ for } t> t_0.\]
\begin{proof} By \eqref{eq-kappas} and Theorem \ref{thm-decay},
\[|(\ka^\alpha)_\theta(\ka^\alpha)_t| = \fr{|(\bar\ka^\alpha)_s|}{\bar\ka}|(\bar\ka^\alpha)_t- \alpha^2 \bar\ka^{2\alpha-3} \bar\ka_s^2|\le C\Big ( (\bar\ka^{\alpha })^\e+ \fr{((\bar\ka^\alpha)_s)^3}{\bar\ka^2}\Big ).\]
In addition, for any $\alpha>1/3$, Propositions  \ref{prop-us} and \ref{prop-usfast} imply that  there is $\e'(\alpha)>0$ such that \[\fr{((\bar\ka^\alpha)_s)^3}{\bar\ka^2}\le C (\bar \ka^\alpha)^{\e'}.\]
\end{proof}
\end{corollary}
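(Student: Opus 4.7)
The plan is to translate the corollary, which is stated in the angle parametrization, into a statement about arclength derivatives of the speed $u=\bar\ka^\alpha$, at which point Theorem \ref{thm-decay} and the gradient estimates of Propositions \ref{prop-us} and \ref{prop-usfast} do all of the work. From $\p_s=\ka\,\p_\theta$ one immediately has $(\ka^\alpha)_\theta=(\bar\ka^\alpha)_s/\bar\ka$. For the time derivative, \eqref{eq-kappas} gives $\p_t\ka=\p_t\bar\ka-\alpha\bar\ka^{\alpha-2}\bar\ka_s^2$, and multiplying by $\alpha\ka^{\alpha-1}$ yields
\[
(\ka^\alpha)_t=(\bar\ka^\alpha)_t-\alpha^2\bar\ka^{2\alpha-3}\bar\ka_s^2.
\]
Using $\bar\ka_s^2=((\bar\ka^\alpha)_s)^2/(\alpha^2\bar\ka^{2\alpha-2})$, the product therefore splits as
\[
|(\ka^\alpha)_\theta(\ka^\alpha)_t|\le \fr{|(\bar\ka^\alpha)_s(\bar\ka^\alpha)_t|}{\bar\ka}+\fr{|(\bar\ka^\alpha)_s|^3}{\bar\ka^2}.
\]

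Theorem \ref{thm-decay} controls the first term: $|(\bar\ka^\alpha)_s(\bar\ka^\alpha)_t|/\bar\ka=|u_s u_t|/u^{1/\alpha}\le C(\bar\ka^\alpha)^\e$ for some $\e=\e(\alpha)>0$ and all $t>t_0$, which is already of the desired form. So everything reduces to proving $((\bar\ka^\alpha)_s)^3/\bar\ka^2\le C(\bar\ka^\alpha)^{\e'}$ for some $\e'>0$. Here the global upper bound on $\bar\ka^\alpha$ from Proposition \ref{prop-curvbdd} places us in the hypotheses of the gradient estimates: for $\alpha\ge 1$, Proposition \ref{prop-us} supplies $|(\bar\ka^\alpha)_s|\le C(\bar\ka^\alpha)^\beta$ with any $\beta<1/\alpha$, while for $\alpha\in(1/3,1)$ the global estimate in Proposition \ref{prop-usfast} gives $|(\bar\ka^\alpha)_s|\le C(\bar\ka^\alpha)^{(1+1/\alpha)/2}$.

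Raising this to the cube and dividing by $\bar\ka^2=(\bar\ka^\alpha)^{2/\alpha}$ gives an exponent $3\beta-2/\alpha$, which is strictly positive provided $\beta>2/(3\alpha)$. For $\alpha\ge 1$ the interval $(2/(3\alpha),1/\alpha)$ is nonempty, and for $\alpha\in(1/3,1)$ the exponent $(1+1/\alpha)/2$ exceeds $2/(3\alpha)$ exactly when $\alpha>1/3$; in both ranges we obtain an admissible $\e'=\e'(\alpha)>0$. Taking the minimum of the two exponents produced by the two terms yields the claimed bound. There is no serious obstacle here beyond carefully tracking the algebra of the two derivative identities; the heavy analytic lifting (the $\alpha>1/2$ hypothesis, in particular) has already been absorbed into Theorem \ref{thm-decay}.
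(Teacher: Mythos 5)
Your argument is correct and is essentially the paper's own proof: the same splitting via \eqref{eq-kappas} into the term $|u_s u_t|/u^{1/\alpha}$ handled by Theorem \ref{thm-decay} and the cubic term $|(\bar\ka^\alpha)_s|^3/\bar\ka^2$ handled by Propositions \ref{prop-us} and \ref{prop-usfast}. You merely make explicit the exponent arithmetic ($3\beta-2/\alpha>0$ for $\beta$ near $1/\alpha$ when $\alpha\ge1$, and $\tfrac32-\tfrac{1}{2\alpha}>0$ for $\alpha>1/3$) that the paper leaves implicit.
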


\section{Convergence to Translator}

In this final section, we prove our convergence  result Theorem  \ref{thm-goal} from which Theorem \ref{thm-main}
also follows.  The main step   in our  proof   is
 Lemma \ref{lemma-main} which follows from  our decay estimates in the previous section  and an appropriate use of the following entropy.

\begin{definition}
For a strictly convex solution to the  $\alpha$-CSF, we define \[ J^\e(t) :=\fr{(\alpha+1)^2}{\alpha^2}\int_\e^{\pi-\e} (\ka^\alpha)^2_\theta - (\ka^\alpha)^2 \, d \theta \]
which can be also expressed in terms of the pressure function $p:= \kappa^{\alpha+1}$, as 
\[ J^\e(t) =  \int_\e^{\pi-\e} \fr{p_\theta^2}{p^{\fr{2}{\alpha+1}}} - \fr{(\alpha+1)^2}{\alpha^2}p^{\fr{2\alpha}{\alpha+1}}\, d\theta.\] 
Also, set  \[J(t):= \lim_{\e\to0} J^\e(t)\in(-\infty,\infty]\] and this  is well defined due to curvature upper bound in Proposition \ref{prop-curvbdd}. 
\end{definition}

Assume  that $M_t$, $t \in [0,+\infty)$ is a solution of the $\alpha$-CSF which satisfies the assumptions of Theorem \ref{thm-goal}.  We first  observe that $J(t)$ is bounded on $[t_0, +\infty)$, for all $t_0 >0$. 

\begin{lemma}\label{lem-entropybdd} For $\alpha\ge1$, $J(t)\le C(t_0, M_0)<\infty$ for $t\ge t_0>0$. 

\begin{proof}By the evolution of $p=\ka^{\alpha+1}$ given in  \eqref{eq-pressure}, we have 
\be\label{eq-11} \ba \fr{\alpha+1}{\alpha^2}  \int^{\pi-\e}_\e \fr{p_t}{p^{\fr{2}{\alpha+1}}} \, d\theta &= \fr{\alpha+1}{\alpha}  \int_\e^{\pi-\e} p^{\fr{\alpha-1}{\alpha+1}} p_{\theta \theta} -\fr{1}{\alpha}\fr{p_\theta^2}{p^{\fr{2}{\alpha+1}}} +\fr{(\alpha+1)^2}{\alpha^2} p^{\fr{2}{\alpha+1}} d\theta \\
&=-J^\e(t) +\Big (\fr{\alpha+1}{\alpha} p^{\fr{\alpha-1}{\alpha+1}} p_\theta \Big )^{\theta=\pi-\e}_{\theta=\e}. \ea\ee
Note that $p^{\fr{\alpha-1}{\alpha+1}} p_\theta= (\alpha+1)\ka^{2\alpha-2}\ka_s= \fr{(\alpha+1)}{\alpha}\fr{u_s}{u^{\fr{1}{\alpha}-1}}$ and this is uniformly bounded for $t\ge t_0$ when $\alpha>\fr{1}{2}$ in view of Proposition \ref{prop-us}.  In addition,  the Harnarck inequality in  Proposition \ref{prop-harnack} implies, \be \label{eq-13}-\fr{p_t}{p^{\fr{2}{\alpha+1}}}= -\fr{p_t}{p} p^{\fr{\alpha-1}{\alpha+1}}\le \fr{\ka^{\alpha-1}}{t}\ee and therefore 
\[  \int_\e^{\pi-\e} -\fr{p_t}{p^{\fr{2}{\alpha+1}}} \leq  \int_\e^{\pi-\e} \fr{\ka^{\alpha-1}}{t}d\theta\le \int_0^{\pi} \fr{\ka^{\alpha-1}}{t} d\theta .\]
This integrand is uniformly bounded for $\alpha \ge 1$ and $t\ge t_0$. Combining the  above shows that 
$J_\e (t)\le C(t_0, M_0)<\infty$, which implies the desired result. 
%
%{\color{red}When $\alpha\in(1/2,1)$, \[ \int_0^{\pi} \fr{\ka^{\alpha-1}}{t} d\theta= \int_M \fr{\ka^{\alpha}}{t} ds\] and \eqref{eq-ubdd} implies uniform boundedness of this for $1/3<\alpha<1$. No!!}
\end{proof}
\end{lemma}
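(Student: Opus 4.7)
The plan is to use the pressure evolution equation \eqref{eq-pressure} to rewrite $J^\e(t)$ as a time-derivative integral plus boundary terms, and then control each piece separately. Specifically, I would multiply \eqref{eq-pressure} by $\fr{\alpha+1}{\alpha^2}\,p^{-2/(\alpha+1)}$ and integrate over $[\e,\pi-\e]$; on the $\alpha\, p\, p_{\theta\theta}$ piece I would integrate by parts once, producing a boundary contribution $\fr{\alpha+1}{\alpha}\bigl[p^{(\alpha-1)/(\alpha+1)}p_\theta\bigr]_\e^{\pi-\e}$ and an interior contribution which, combined with the $-\fr{1}{\alpha}p_\theta^2/p^{2/(\alpha+1)}$ and $\fr{(\alpha+1)^2}{\alpha^2}p^{2\alpha/(\alpha+1)}$ terms, yields exactly $-J^\e(t)$. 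This gives the identity
\[
J^\e(t)=-\fr{\alpha+1}{\alpha^2}\int_\e^{\pi-\e}\fr{p_t}{p^{2/(\alpha+1)}}\,d\theta+\fr{\alpha+1}{\alpha}\Big[p^{(\alpha-1)/(\alpha+1)}p_\theta\Big]_{\theta=\e}^{\theta=\pi-\e}.
\]

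For the time-derivative integral, the Harnack estimate of Proposition \ref{prop-harnack} gives $p_t/p=(\alpha+1)\ka_t/\ka\ge 1/t$, so $p_t\ge p/t>0$; hence the integrand $p_t/p^{2/(\alpha+1)}$ is nonnegative, and the whole term $-\fr{\alpha+1}{\alpha^2}\int p_t/p^{2/(\alpha+1)}\,d\theta$ is nonpositive, uniformly in $\e$. (Equivalently, $-p_t/p^{2/(\alpha+1)}\le -\ka^{\alpha-1}/t$, and for $\alpha\ge 1$ the factor $\ka^{\alpha-1}$ is controlled by the global curvature upper bound of Proposition \ref{prop-curvbdd}.)

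For the boundary term I would rewrite, using $p_\theta=(\alpha+1)\ka^{\alpha-1}\ka_s$, $u=\bar\ka^\alpha$ and $u_s=\alpha\bar\ka^{\alpha-1}\bar\ka_s$,
\[
p^{(\alpha-1)/(\alpha+1)}p_\theta=(\alpha+1)\,\ka^{2\alpha-2}\ka_s=\fr{\alpha+1}{\alpha}\,u^{1-1/\alpha}\,u_s.
\]
By Proposition \ref{prop-us}, applied in extrinsic unit balls centered at arbitrary points of $M_t$ (permissible because Proposition \ref{prop-curvbdd} provides a global bound $u\le L(t_0,M_0)$ for $t\ge t_0$), one has $|u_s|\le C(t_0,M_0)\,u^\beta$ on $M_t$ for any fixed $\beta\in(0,\min(1,1/\alpha))$. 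Since $\alpha\ge 1$ gives $1-1/\alpha\ge 0$, the boundary expression is bounded by $C\,u^{\beta+1-1/\alpha}$; the exponent $\beta+1-1/\alpha$ is strictly positive, so combined with the curvature decay $u(\theta,t)\to 0$ as $\theta\to 0^+$ or $\pi^-$ from Theorem \ref{thm-curvaturedecay}, the boundary contribution is uniformly bounded (indeed tends to $0$ as $\e\to 0$).

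Passing $\e\to 0$ in the displayed identity, both pieces stay bounded above by a constant depending only on $t_0$ and $M_0$, which yields $J(t)\le C(t_0,M_0)$. I do not foresee a serious obstacle: the role of the hypothesis $\alpha\ge 1$ is exactly to make $1-1/\alpha\ge 0$ so that the factor $u^{1-1/\alpha}$ in the boundary term is absorbed by the vanishing of $u$ at the two ends, and to force $\ka^{\alpha-1}$ to be globally bounded so that the Harnack-controlled time-derivative piece provides an upper bound. The only mild subtlety is the covering argument needed to upgrade the local estimate of Proposition \ref{prop-us} to a pointwise estimate valid everywhere along $M_t$, which however follows immediately from the global curvature upper bound.
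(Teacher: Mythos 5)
Your decomposition is the same as the paper's (integrate the pressure equation against $\fr{\alpha+1}{\alpha^2}p^{-2/(\alpha+1)}$, integrate by parts once to produce $-J^\e(t)$ plus the boundary term $\fr{\alpha+1}{\alpha}\bigl[p^{\fr{\alpha-1}{\alpha+1}}p_\theta\bigr]_\e^{\pi-\e}$, and control the boundary term by Proposition \ref{prop-us} together with the curvature bounds), but your treatment of the interior term contains a genuine error. You claim that the Harnack estimate gives $\ka_t\ge\fr{1}{\alpha+1}\fr{\ka}{t}$, hence $p_t\ge p/t>0$ and the term $-\fr{\alpha+1}{\alpha^2}\int p_t\,p^{-2/(\alpha+1)}d\theta$ is nonpositive. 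The displayed inequality in Proposition \ref{prop-harnack} has a sign typo: the Li--Yau--Hamilton/Chow estimate quoted in its proof yields $\ka_t\ge-\fr{1}{\alpha+1}\fr{\ka}{t}$, which is also what the integrated statement $\ka(\theta,t_2)\ge(t_1/t_2)^{1/(\alpha+1)}\ka(\theta,t_1)$ and every other use of the Harnack in the paper correspond to. The positive-sign version is false in general (a translating soliton, time-shifted to start at $t=0$, has $\ka_t\equiv 0$ in the $\theta$-parametrization), and if it were true your argument would prove the lemma for all $\alpha>\fr12$, contradicting the authors' explicit remark before Lemma \ref{lemma-main} that finiteness of the entropy is not available for $\alpha\in(\fr12,1)$. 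Your parenthetical ``equivalently $-p_t/p^{2/(\alpha+1)}\le-\ka^{\alpha-1}/t$'' carries the same sign error: the correct consequence is $-p_t/p^{2/(\alpha+1)}\le+\ka^{\alpha-1}/t$, i.e.\ \eqref{eq-13}, so the interior term is \emph{not} automatically nonpositive.

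With the correct sign, the interior term must be bounded above by $\fr{\alpha+1}{\alpha^2}\int_\e^{\pi-\e}\ka^{\alpha-1}/t\,d\theta$, and this is exactly where the hypothesis $\alpha\ge1$ enters: since $\ka\to0$ at $\theta=0,\pi$, the integrand $\ka^{\alpha-1}$ would blow up there for $\alpha<1$, whereas for $\alpha\ge1$ it is uniformly bounded by the global curvature upper bound of Proposition \ref{prop-curvbdd}, giving a bound independent of $\e$. This is the paper's argument, and you essentially have all the ingredients for it; the fix is only to replace your nonpositivity claim by this estimate. A secondary, minor mis-attribution: the boundary term does not need $\alpha\ge1$ at all — choosing $\beta\in(\fr1\alpha-1,\min(1,\fr1\alpha))$, which is possible for every $\alpha>\fr12$, already makes $u^{1-1/\alpha}u_s$ bounded (this is why the paper states that step for $\alpha>\fr12$); the restriction to $\alpha\ge1$ is forced solely by the interior $\ka^{\alpha-1}$ integral. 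Your remark about re-centering the cut-off of Proposition \ref{prop-us} at arbitrary points using the global bound on $u$ is fine and consistent with how the paper uses that proposition.
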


\begin{prop}Suppose $\alpha\ge1$. 
For $0<t_1<t_2<\infty$, we have  \[J(t_2) -J(t_1) = -\fr{ 2(\alpha+1)^2}{\alpha}\int_{t_1}^{t_2}\int_0^{\pi}  \ka^{\alpha+1} [(\ka^\alpha)_{\theta\theta}- \ka^\alpha]^2 \, d\theta dt. \]
\begin{proof} Since everything is smooth and bounded  on $[\e,\pi-\e]\times[t_1,t_2]$,  we have
\be\label{eq-entropy}\ba \frac{d}{dt}  J^\epsilon(t)  &=\fr{(\alpha+1)^2}{\alpha^2} \int_\epsilon^{\pi-\epsilon} \Big ( 2(\ka^\alpha)_\theta(\ka^\alpha)_{t\theta} -2(\ka^\alpha)(\ka^\alpha)_t \Big ) \, d\theta \\
&= -\fr{2(\alpha+1)^2}{\alpha^2}\int_\epsilon^{\pi-\epsilon} \fr{\ka_t (\ka^\alpha)_t }{\ka^2} \, d\theta + \Big (\fr{2(\alpha+1)^2}{\alpha^2} (\ka^\alpha)_\theta (\ka^\alpha)_t \Big )^{\theta=\pi-\epsilon}_{\theta=\epsilon}\\
& = \int_\epsilon^{\pi-\epsilon} \fr{-2(\alpha+1)^2}{\alpha} \fr{\ka_t^2}{\ka^{3-\alpha}} \, d\theta+ \Big (\fr{2(\alpha+1)^2}{\alpha^2} (\ka^\alpha)_\theta (\ka^\alpha)_t \Big )^{\theta=\pi-\epsilon}_{\theta=\epsilon}\\
& = \fr{2(\alpha+1)^2}{\alpha^2}\Big (-\alpha\int_\epsilon^{\pi-\epsilon} \ka^{\alpha+1} [(\ka^\alpha)_{\theta\theta}+ \ka^\alpha]^2 \, d\theta+ \Big ( (\ka^\alpha)_\theta (\ka^\alpha)_t \Big )^{\theta=\pi-\epsilon}_{\theta=\epsilon} \Big ).
\ea\ee
In view of Theorem \ref{thm-decay}, Theorem \ref{thm-curvaturedecay}, and Lemma \ref{lem-entropybdd}, for $\alpha\ge1$ we can take $\e\to0$ and monotone convergence theorem implies the result.
\end{proof}
\end{prop}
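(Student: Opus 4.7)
The plan is to fix $\e\in(0,\pi/2)$, differentiate $J^\e(t)$ in $t$ on the compact $\theta$-interval $[\e,\pi-\e]$, integrate by parts in $\theta$ to produce boundary terms at $\theta=\e,\pi-\e$, and finally send $\e\to 0$ using the decay estimates of Section~3 to kill those boundary contributions.

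For the computational step, I would note that on $[\e,\pi-\e]\times[t_1,t_2]$ the curvature $\ka$ is smooth, bounded above (Proposition~\ref{prop-curvbdd}) and bounded below away from zero (strict convexity plus continuity on a compact set), so differentiation under the integral gives
\[
\frac{d}{dt}J^\e(t)=\fr{(\alpha+1)^2}{\alpha^2}\int_\e^{\pi-\e}\bigl(2(\ka^\alpha)_\theta(\ka^\alpha)_{t\theta}-2\ka^\alpha(\ka^\alpha)_t\bigr)\,d\theta.
\]
Integrating the first term by parts produces the boundary contribution $\bigl[2(\ka^\alpha)_\theta(\ka^\alpha)_t\bigr]_\e^{\pi-\e}$ and recombines the bulk into the expression $-2\bigl((\ka^\alpha)_{\theta\theta}+\ka^\alpha\bigr)(\ka^\alpha)_t$. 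Invoking the evolution equation \eqref{eq-k^a}, $(\ka^\alpha)_t=\alpha\ka^{\alpha+1}\bigl((\ka^\alpha)_{\theta\theta}+\ka^\alpha\bigr)$, rewrites this integrand as the non-negative quantity $\alpha\ka^{\alpha+1}\bigl((\ka^\alpha)_{\theta\theta}+\ka^\alpha\bigr)^2$, yielding
\[
\frac{d}{dt}J^\e(t)=-\fr{2(\alpha+1)^2}{\alpha}\int_\e^{\pi-\e}\ka^{\alpha+1}\bigl((\ka^\alpha)_{\theta\theta}+\ka^\alpha\bigr)^2d\theta+\fr{2(\alpha+1)^2}{\alpha^2}\bigl[(\ka^\alpha)_\theta(\ka^\alpha)_t\bigr]_\e^{\pi-\e}.
\]

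Integrating in $t$ from $t_1$ to $t_2$ and sending $\e\to 0$ then gives the claimed identity: the left side converges to $J(t_2)-J(t_1)$, which is finite by Lemma~\ref{lem-entropybdd} for $\alpha\ge 1$; the bulk integral converges by monotone convergence since the integrand is non-negative; and the boundary term goes to zero. The last point is the main obstacle and is what the entire machinery of the previous sections is built for. It relies on combining Corollary~\ref{cor-decay}, which gives $|(\ka^\alpha)_\theta(\ka^\alpha)_t|(\theta,t)\le C(t_0,M_0)\,(\ka^\alpha)^{\e'}$ for some $\e'=\e'(\alpha)>0$, with Theorem~\ref{thm-curvaturedecay}, which gives $\ka^\alpha(\theta,t),\ \ka^\alpha(\pi-\theta,t)\le C\,\theta^{2/3}$ near the endpoints once $t$ is large enough. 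Together these force the boundary contribution to decay like $\e^{2\e'/3}$ uniformly in $t\in[t_1,t_2]$, so its time integral vanishes as $\e\to 0$.

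The single genuine difficulty is exactly this boundary vanishing; the rest is a routine manipulation of the evolution equations and an application of monotone convergence. This is also the point at which the hypothesis $\alpha\ge 1$ (together with the prior restriction $\alpha>1/2$) is used, via Lemma~\ref{lem-entropybdd} and via Theorem~\ref{thm-decay} which underlies Corollary~\ref{cor-decay}.
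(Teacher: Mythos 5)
Your proposal is correct and follows essentially the same route as the paper: differentiate $J^\e$, integrate by parts to isolate the boundary term, substitute the evolution equation \eqref{eq-k^a} to recognize the bulk as $-\fr{2(\alpha+1)^2}{\alpha}\ka^{\alpha+1}[(\ka^\alpha)_{\theta\theta}+\ka^\alpha]^2$, then let $\e\to0$ using the decay estimates (Theorem \ref{thm-decay}/Corollary \ref{cor-decay} with Theorem \ref{thm-curvaturedecay}) for the boundary term, monotone convergence for the bulk, and Lemma \ref{lem-entropybdd} for finiteness when $\alpha\ge1$. The only cosmetic difference is that you invoke Corollary \ref{cor-decay} where the paper cites Theorem \ref{thm-decay} directly, which is the same estimate expressed in the $(\theta,t)$ parametrization.
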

In the case $\alpha\in(1/2,0)$, we cannot  not show that the entropy is finite, so we avoid using the global entropy 
defined on $[0,\pi]$ and approach differently.  Our decay estimate is sufficient  to carry out this,  as we see   in the lemma  below.
\begin{lemma}\label{lemma-main} Assume that $\alpha >1/2$. 
For fixed $\tau>0$  and $\delta>0$, we have \[ \int_{t}^{t+\tau}\int_\delta^{\pi-\delta}  \ka^{\alpha+1} \big ( (\ka^\alpha)_{\theta\theta}+\ka^\alpha \big )^2 d\theta dt \to 0 \qquad\mbox{as}\,\,  t\to \infty.\]
\begin{proof} It suffices to prove that for every $\e>0$, there exist $\bar \delta \in (0,\delta)$ and $t_0>0$ such that 
 \[\int_{t}^{t+\tau}\int_{\bar\delta}^{\pi-\bar\delta}  \ka^{\alpha+1} \big ( (\ka^\alpha)_{\theta\theta}+\ka^\alpha \big )^2 \, d\theta dt \le \e \quad\mbox{for}\quad t\ge t_0.\] 
In view of  \eqref{eq-entropy},  for $0<\bar\delta<\delta$ and $t \geq t_0>0$, we have 
\[\ba \int_{t}^{t+\tau}\int_{\bar\delta}^{\pi-\bar\delta}\ka^{\alpha+1} [(\ka^\alpha)_{\theta\theta}+ \ka^\alpha]^2d\theta dt&=\fr{\alpha}{2(\alpha+1)^2}(J^{\bar\delta}(t)-J^{\bar\delta}(t+\tau))+\fr{1}{\alpha}\int_{t}^{t+\tau}\Big ( (\ka^\alpha)_\theta (\ka^\alpha)_t \Big )^{\theta=\pi-\bar\delta} _{\theta=\bar\delta}dt.\ea\]
First, we  control the boundary terms using  Theorem \ref{thm-curvaturedecay} and  Corollary \ref{cor-decay}
\[\ba\Big |\int_{t}^{t+\tau}\Big ( (\ka^\alpha)_\theta (\ka^\alpha)_t \Big )^{\theta=\pi-\bar\delta} _{\theta=\bar\delta}dt\Big |&\le \tau \sup_{t\in[t_0,\tau]}[(\ka^\alpha)_\theta (\ka^\alpha)_t] (\bar\delta, t) +\tau\sup_{t\in[t_0,\tau]} [(\ka^\alpha)_\theta (\ka^\alpha)_t ](\pi-\bar\delta, t)\\
&\le 2\tau \, C(t_0,\bar\delta,M_0)\quad \mbox{with} \,\,  C(t_0,\bar\delta,M_0)\to 0 \mbox{ as }\bar\delta\to0. \ea\] 
Thus, for given $\e>0$ and $t_0>0$, there exists $\delta_0$ such that if $0<\bar \delta\le \delta_0$ and $t\ge t_0$, \[\ba\Big |\int_{t}^{t+\tau}\Big ( (\ka^\alpha)_\theta (\ka^\alpha)_t \Big )^{\theta=\pi-\bar\delta} _{\theta=\bar\delta}dt\Big |&\le \e .\ea \] 
To finish the proof of the lemma it suffices to prove the following claim. 

\begin{claim}\label{claim-4} For every $\e>0$, there exists $\delta_0>0$ such that for each $0<\bar\delta \le \delta_0$ we can find  $t_0=t_0(\bar\delta)>0$ such that \[|J^{\bar\delta}(t)| \le \e\qquad \mbox{for}\,\,  t \ge t_0.\]   
\begin{proof}[Proof of Claim \ref{claim-4}] We prove the upper and lower bound separately. The proof of  the upper bound uses \eqref{eq-11} i.e. we  bound $J^{\bar \delta}(t)$ in terms of   the integral term and boundary term in  \eqref{eq-11}.
To bound the integral term, we use    \eqref{eq-13},  the curvature lower bound for $\alpha\in(1/2,1)$,  and the curvature upper bound for $\alpha \ge1$ (both shown in Proposition \ref{prop-curvbdd})   to obtain \[ \int_{\bar\delta}^{\pi-\bar\delta}- \fr{\alpha+1}{\alpha^2} \fr{p_t}{p^{\fr{2}{\alpha+1}}} d\theta \le  \int_{\bar\delta}^{\pi-\bar\delta}- \fr{\alpha+1}{\alpha^2}\fr{\ka^{\alpha-1}}{t}d\theta \to 0 \quad\mbox{as}\quad t\to \infty. \]
To bound the boundary term, we note that ${\displaystyle p^{\fr{\alpha-1}{\alpha+1}} p_\theta= (\alpha+1)\ka^{2\alpha-21}\ka_s= (\alpha+1)\fr{u_s}{u^{\fr{1}{\alpha}-1}}}$ and $\fr{1}{\alpha}-1 < 1$ for $\alpha>\fr{1}{2}$. Therefore, Proposition \ref{prop-us} and Theorem \ref{thm-curvaturedecay} imply that for any given $\e>0$ and $t_0>0$, there exists $\delta_0$ such that if $0<\bar \delta\le \delta_0$ and $t\ge t_0$ we have $\Big |\Big (\fr{\alpha+1}{\alpha} p^{\fr{\alpha-1}{\alpha+1}} p_\theta \Big )^{\theta=\pi-\bar\delta}_{\theta=\bar\delta} \Big |\le \e$. This completes the proof of the upper bound.

For the  lower bound, we will use   the 1-dim optimal Poincar\'e inequality, namely the bound 
\[\int_{\delta}^{\pi-\delta} f'(s)^2 \, ds -  \Big (\fr{\pi}{\pi-2\delta}\Big )^2 \int_{\delta}^{\pi-\delta} f(s)^2\, ds \ge 0\] 
which holds for every  smooth function $f$ with $f(\delta)=f(\pi-\delta)=0$. The equality holds for properly scaled sine functions. To apply it for our case, recall that   \[\fr{\alpha^2}{(\alpha+1)^2} J^{\bar\delta}(t) = \int_{\bar\delta}^{\pi-\bar\delta} \big ( (\ka^\alpha)_{\theta}^2- \ka^\alpha \big )  d\theta \]
and set   $U(\theta,t):=\ka^\alpha(\theta,t)$  and $-L(\theta,t):= \fr{U(\pi-\bar\delta,t)-U(\bar\delta,t)}{\pi-2\bar\delta} (\theta-\bar\delta) + U(\bar\delta,t)$ (note that we distinguish the notation of $U(\theta,t):=\ka^\alpha(\theta,t)$ from $u(n,t):=\bar \kappa^\alpha(n,t)$ 
which uses the geometric parametrization). Since $(U+L)(\bar \delta) =  (U+L)(\pi -\bar \delta)=0$,  the  Poincar\'e inequality above 
combined with  Young's inequality   imply 
\[\ba0&\le  \int_{\bar\delta}^{\pi-\bar\delta} \Big ( (U+L)^2_\theta- \Big (\fr{\pi}{\pi-2\delta}\Big )^2(U+L)^2 \Big ) \,d\theta \\
&= \int_{\bar\delta}^{\pi-\bar\delta} \Big ( U_\theta^2 - \fr{\pi^2}{(\pi-2\bar\delta)^2} U^2 + L_\theta^2-\fr{\pi^2}{(\pi-2\bar\delta)^2} L^2 + 2U_\theta \, L_\theta-  \fr{2\pi^2}{(\pi-2\bar\delta)^2} UL \Big )  \,d\theta\\
&\le   \int_{\bar\delta}^{\pi-\bar\delta} \Big ( U_\theta^2 - \fr{\pi^2}{(\pi-2\bar\delta)^2} U^2 + L_\theta^2-\fr{\pi^2}{(\pi-2\bar\delta)^2} L^2 \\ &\qquad \qquad \quad \,\,\,  + \fr{2\bar\delta}{\pi-2\bar\delta}U_\theta^2 +\fr{\pi-2\bar\delta}{2\bar\delta} L_\theta^2 + \fr{2\bar\delta \pi U^2}{(\pi-2\bar\delta)^2} + \fr{\pi^3L^2}{2\bar\delta(\pi-2\bar\delta)^2}  \Big ) \,d\theta  \\
&= \int_{\bar\delta}^{\pi-\bar\delta} \fr{\pi}{\pi-2\bar \delta} \big (U_\theta^2-U^2 \big )+ \fr{\pi}{2\bar\delta}\, \big (L_\theta^2+\fr{\pi}{\pi-2\bar\delta}L^2 \big )\,d\theta \ea\]
We conclude that
$$J^{\bar \delta}(t) : =   \frac{(\alpha+1)^2}{\alpha^2}   \int_{\bar\delta}^{\pi-\bar\delta} \big (  U_{\theta}^2- U \, \big )  d\theta  \geq 
 \frac{(\alpha+1)^2}{\alpha^2}   \fr{\pi-2\bar\delta}{\pi}\fr{\pi}{2\bar\delta} 
\int_{\bar\delta}^{\pi-\bar\delta}  \big ( L_\theta^2+\fr{\pi}{\pi-2\bar\delta}L^2 \big ) d\theta.$$
To estimate the last integral above we observe that by  Theorem \ref{thm-curvaturedecay},  we have $|L|$ and $|L_\theta| \le C(M_0)\bar\delta^{2/3}$ on $[\bar\delta,\pi-\bar\delta]$ for  all  $\bar\delta \in (0, \fr{\pi}{4})$ and $t>3$.  Hence, we have
 \[J^{\bar\delta}(t)  \ge -C(M_0,\alpha)\,  \bar\delta^{2\fr{2}{3}-1} =-C(M_0,\alpha)\, \bar\delta^{\fr{1}{3}}\] 
 which gives the bound from below.  This completes the proof of the claim. 
\end{proof}
\end{claim}
\end{proof}

\end{lemma}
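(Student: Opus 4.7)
The plan is to derive an integrated identity for $J^{\bar\delta}$ that expresses the target integral as the sum of an entropy difference and a pair of boundary contributions, and then show that each piece can be made small by first taking $\bar\delta$ small and then $t$ large. Specifically, starting from the computation that yields \eqref{eq-entropy}, for any $\bar\delta\in(0,\delta)$ we have
\[
\int_{t}^{t+\tau}\!\!\int_{\bar\delta}^{\pi-\bar\delta}\!\!\ka^{\alpha+1}\bigl((\ka^\alpha)_{\theta\theta}+\ka^\alpha\bigr)^2\, d\theta\, dt
= \frac{\alpha}{2(\alpha+1)^2}\bigl(J^{\bar\delta}(t)-J^{\bar\delta}(t+\tau)\bigr)
+ \frac{1}{\alpha}\int_{t}^{t+\tau}\!\Bigl[(\ka^\alpha)_\theta(\ka^\alpha)_t\Bigr]_{\bar\delta}^{\pi-\bar\delta}\, dt.
\]
Since our target integral is bounded below by the corresponding integral over $[\bar\delta,\pi-\bar\delta]\supset[\delta,\pi-\delta]$, showing each of the two right-hand pieces is $O(\varepsilon)$ for small $\bar\delta$ and large $t$ will finish the proof.

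For the boundary integrand I would appeal to Corollary \ref{cor-decay}, which provides $|(\ka^\alpha)_\theta(\ka^\alpha)_t|\le C\,(\ka^\alpha)^{\e'}$ for $t\ge t_0$, combined with the H\"older decay $\ka^\alpha(\theta,t)\le C\,\theta^{2/3}$ (and the symmetric bound at $\pi$) from Theorem \ref{thm-curvaturedecay}. These two estimates give $\bigl|[(\ka^\alpha)_\theta(\ka^\alpha)_t]_{\bar\delta}^{\pi-\bar\delta}\bigr|\le C(M_0)\,\bar\delta^{2\e'/3}\to 0$ as $\bar\delta\to 0$, uniformly for $t\ge t_0$. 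This disposes of the boundary time-integral after fixing $\bar\delta$ small.

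The main task is then to show that $|J^{\bar\delta}(t)|$ can be made small by choosing $\bar\delta$ small and $t$ large. I would separate this into an upper and a lower bound on $J^{\bar\delta}(t)$. For the upper bound I would use the pressure identity \eqref{eq-11}, which writes $J^{\bar\delta}$ as a time-derivative term plus a boundary term. The time-derivative term $-\tfrac{\alpha+1}{\alpha^2}\int p_t/p^{2/(\alpha+1)}\,d\theta$ is bounded by $\int\ka^{\alpha-1}/t\,d\theta$ via Harnack (Proposition \ref{prop-harnack}); this is $o(1)$ as $t\to\infty$ for each fixed $\bar\delta$, using the two-sided curvature bounds of Proposition \ref{prop-curvbdd} on $[\bar\delta,\pi-\bar\delta]$. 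The boundary term is controlled by $|u_s|/u^{1/\alpha-1}$; since $1-1/\alpha>-1$ for $\alpha>\tfrac12$, the derivative estimate of Proposition \ref{prop-us} with $\beta$ slightly below $\min(1,1/\alpha)$ gives a positive power of $u$, which is then made small by Theorem \ref{thm-curvaturedecay}. For the lower bound I would invoke the sharp one-dimensional Poincar\'e inequality on $[\bar\delta,\pi-\bar\delta]$ applied not to $U:=\ka^\alpha$ directly (which need not vanish at the endpoints) but to $U+L$, where $L$ is the linear interpolant chosen so that $(U+L)(\bar\delta)=(U+L)(\pi-\bar\delta)=0$. Expanding the inequality and applying Young's inequality to separate $U$-terms from the $L$-corrections, the mixed terms can be absorbed into $\int U_\theta^2$ with a factor $O(1)$, leaving the remainder as $O(\bar\delta^{-1})\int (|L|^2+|L_\theta|^2)\,d\theta$. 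Theorem \ref{thm-curvaturedecay} bounds $|L|,|L_\theta|\le C\bar\delta^{2/3}$ on $[\bar\delta,\pi-\bar\delta]$, so the remainder is $O(\bar\delta^{1/3})$, giving $J^{\bar\delta}(t)\ge -C\bar\delta^{1/3}$.

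The hard part is the lower bound on $J^{\bar\delta}$: without a bona fide global entropy $J(t)$ (which need not be finite for $\alpha\in(1/2,1)$), one cannot simply monotone-limit. The trick of linearizing by $L$ and using the sharp Poincar\'e constant is what turns the uncontrolled negative $\int U^2$-contribution into a manageable $O(\bar\delta^{1/3})$ error. The whole argument is held together by the decay estimates of Section 3, which is the precise point where $\alpha>\tfrac12$ enters essentially.
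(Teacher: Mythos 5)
Your proposal follows essentially the same approach as the paper: the same integrated identity from \eqref{eq-entropy}, the same control of boundary terms via Corollary \ref{cor-decay} and Theorem \ref{thm-curvaturedecay}, the same upper bound on $J^{\bar\delta}$ via the pressure identity \eqref{eq-11} together with Harnack and the curvature bounds, and the same lower bound via the sharp 1-D Poincar\'e inequality applied to $U+L$ with $L$ the linear interpolant. You correctly identify the linearization trick as the crux of the lower bound and the exponent condition $1/\alpha-1<1$ as the precise entry point of $\alpha>1/2$.
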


We are now  in position to give he  proof of our main convergence  result, Theorem  \ref{thm-goal}. We have already observed in section 2 that Theorem \ref{thm-main}  follows from Theorem  \ref{thm-goal}. 

 \begin{proof}[Proof of Theorem \ref{thm-goal}]
Recall $U(\theta,t): =\ka^\alpha (\theta,t)$ solves the equation 
\be\label{eqn-uuu} 
U_t=\alpha U^{1+\frac{1}{\alpha}}(U_{\theta\theta}+U)\qquad \mbox{ on} \,\,  (0,\pi)\times (0,\infty).
\ee
For a  given time sequence $t_i \to \infty$, we define   the sequence of solutions $U^i(\theta,t):= U(\theta,t+t_i)$.
By Proposition \ref{prop-curvbdd}, the sequence $\{ U^i \}$  is locally uniformly bounded from above and below in spacetime and $i \gg1$.  That is,  for any compact spacetime region, there is  $i_0 \gg 1$ such  that  
$\{ U^i \}_{i \geq i_0}$ is  uniformly bounded from above and below by positive numbers. 
This implies that equation \eqref{eqn-uuu} is  uniformly parabolic for $U=U^i$, $i \geq i_0$ and therefore parabolic regularity theory implies that we  have locally uniform control on derivatives of the $u_i$ of all orders. 
 By the Arzel\`a-Ascoli theorem, we can find a subsequence,  still denoted by $U^i$, such that $U^i \to \bar U$
 uniformly on compact sets but also  $$U^i \to \bar U \qquad \text{ in }\,\,  C^\infty_{loc}((0,\infty)\times(-\infty,\infty)).$$
 Then, the Lemma \ref{lemma-main} implies that $\bar U_{\theta\theta}+\bar U=0$,  thus $\p_t \bar U=0$. In addition, Proposition \ref{prop-curvbdd} and Theorem \ref{thm-curvaturedecay} give $\bar U > 0$ and $\displaystyle \lim_{\theta \to 0} \bar U(\theta)=\lim_{\theta \to \pi} \bar U(\theta)=0$. Hence, we have 
$$\bar U(\theta)=c\, \sin \theta$$ for some constant $c>0$. We will next show that $c=m(\alpha)$, where $m(\alpha)$ is given by \eqref{eq-speed}. For this, it suffices to show that 
\be\label{eqn-upi}
U(\pi/2,t):=\ka^\alpha(\pi/2 ,t ) \to m(\alpha), \qquad \mbox{as} \,\, t\to \infty.
\ee

\noindent{\em Proof of \eqref{eqn-upi}:} Let's suppose first that $\liminf_{t\to \infty}  U(\pi/2,t)< m(\alpha)$. Then in view of the curvature lower bound in Proposition \ref{prop-curvbdd}, there is a sequence $t_i \to \infty$ such that $U(\theta,t_i)\to m'\sin \theta $ locally smoothly on $(0,\pi)$ for some $m'\in(0,m(\alpha))$. {Let $\big (x_1(\theta,t), x_2(\theta,t)\big )$ be the position vector of our solution $M_t$ parametrized by $\theta$}. For small $\e>0$,  this convergence and  \eqref{eq-curveintheta} imply that  we have, for $x_1(\theta,t)$,\begin{align}\label{eq-x_1conv}
x_1(\pi-\e,t)-x_1(\e,t)= \int_{\e}^{\pi-\e} \fr{\sin \theta}{\ka(\theta,t)}d\theta \to (m')^{-1/\alpha}\int_\e^{\pi-\e} (\sin \theta)^{1-\fr{1}{\alpha}} d\theta\qquad \mbox{as}\,\,  t\to \infty.
\end{align} 
Recall the assumptions  of Theorem \ref{thm-goal} and Proposition \ref{prop-basic} which imply that $M_{t_0}$ is a graph on $(-1,1)$, an interval of length $2$.  
In view of \eqref{eq-speed} and $m' < m(\alpha)$, we can find a small $\e(m')>0$ depending on $m'$ and a large $t_0(\e,m')>0$ depending on $\e,m'$ such that $x_1(\pi-\e,t_0)-x_1(\e,t_0)>2$. This gives a contradiction.  Therefore, 
\be\label{eq-liminf u tip}
\liminf_{t\to \infty}  U(\pi/2,t) \geq m(\alpha).
\ee

Next, suppose $\limsup_{t\to \infty}  U(\pi/2,t) > m(\alpha)$ and hence there is a sequence $t_i\to \infty$ such that $U(\pi/2,t_i) \ge (1+4\e)\, m(\alpha)$,  for some $\e>0$. In view of the Harnack estimate Proposition \ref{prop-harnack}, there is $c(\e)>0$ such that $U(\pi/2,t) \ge (1+3\e)\, m(\alpha)$ for $t\in[t_i,(1+c)t_i]$. Meanwhile, the inequality \eqref{eq-liminf u tip} implies that there is $\bar t>0$ such that $U(\pi/2,t) > (1-c\e)\, m(\alpha) $ for $t>\bar t$. Note that $\p_tx_2(\pi/2,t)= \ka^\alpha(\pi/2,t)$ and therefore, 
\[\ba x_2(\pi/2,(1+c)\,  t_i) &= x_2(\pi/2,t_i) + \int_{t_i}^{(1+c)t_i} \ka^\alpha(\pi/2,\tau)\,  d\tau \\
&\ge [(1-c\e)\, m(\alpha)\, t_i -C] + (1+3\e)\, (c t_i)\, m(\alpha)\\
&= m(\alpha) \, \Big (1+\fr{2c\e}{1+c}\Big ) (1+c)\, t_i -C.  \ea\]
On the other hand, we can put a translating soliton of speed $m(\alpha)\, \Big (1+\fr{c\e}{1+c}\Big )$ above $M_0$ and inside $\{|x_1|< 1-\delta\}$,  for some $\delta(\e,c)>0$ depending on $\e,c$ at the initial time $t=0$. Then, by the  comparison principle 
\[\ba x_2(\pi/2,(1+c)\, t_i) \le m(\alpha)\, \Big (1+\fr{c\e}{1+c}\Big ) (1+c)\, t_i + C\ea\]
which contradicts  the previous inequality for $t_i \gg 1$.  This completes the proof of  \eqref{eqn-upi}. 

\medskip
We have just seen that  the sequence $U^i$ smoothly converges to  $\bar U = m(\alpha)\, \sin \theta$ on compact sets along arbitrary sequence. Thus,  $U(\cdot,t)\to \bar U$  in $C^\infty_{loc}((0,\pi))$ as $t\to\infty$. From the convergence \eqref{eq-x_1conv} with $m'=m(\alpha)$ and Proposition \ref{prop-basic}, it is easy to see $x_1(\pi/2,t)$, the $x_1$ coordinate of the tip, converges to $0$ as $t\to\infty$. Then \eqref{eq-curveintheta}, Proposition \ref{prop-translator}  and the convergence of
  $\ka(\theta,t)$ to $\big ( m(\alpha) \,  \sin \theta \big )^{1/\alpha}$ yield our desired convergence of the graphical function stated in Theorem \ref{thm-goal}. This completes  the proof of Theorem \ref{thm-goal}.

\end{proof}

%{\color{blue}
%
% Up to a translation of the origin, a $C^2$-strictly convex curve is completely determined by $\ka(\theta)$ from the formula \be \label{eq-formula}x_1(\theta_2)-x_1(\theta_1)= \int_{\theta_1}^{\theta_2} \fr{\sin \theta}{\ka(\theta)}d\theta \quad\mbox{and}\quad x_2(\theta_2)-x_2(\theta_1)= \int_{\theta_1}^{\theta_2} -\fr{\cos \theta}{\ka(\theta)}d\theta \ee as appear in Lemma 4.1.1 of \cite{GH} (Therein, $\theta$  is the angle of tangent vector and differ by $\pi/2$ with ours). In this regards, the translators are characterized by $\ka^\alpha(\theta) =c \sin (\theta+\theta_0)$ for $\theta \in (-\theta_0,\pi-\theta_0)$ where $c>0$ and $\theta_0$ are determined by the width of the slab and the direction of translating. By the the formula, it could be checked that the width $d>0$ and $c>0$ are actually related by $c^{1/ \alpha} d= {\int_0^\pi \sin^{1-\fr{1}{\alpha}}\theta d\theta} $ (and this indicates the formula also could be used to show there is no translator in a slab for $\alpha <1/2$ and it should be an entire graph). Without loss of generality, we may assume the slab is parallel to $x_2$-axis with the width $d=\int_0^\pi \sin^{1-\fr{1}{\alpha}}\theta d\theta$. By \eqref{eq-formula} and the assumptions, it suffices to show \[\ka^\alpha(\theta,t) \xrightarrow{C^\infty_{\text{loc}}((0,\pi))}  \sin \theta\quad \mbox{as}\,\, t\to \infty  \] for the proof of the theorem.
%
%
%}

%%%%%%%%%%%%%%%%%%%%%%%%%%%%%%%%%%%%%%%%%%%%%%%%%%%%%%%%%%%%%%%%%%%%%%%%%%%%%%%%%%%%%%%%%%%%%%%%%%%%%%
%------------------------------------------------------------------------------%

\bigskip
\bigskip
\bigskip

\centerline{\bf Acknowledgements}

\smallskip 

\noindent K. Choi has been partially supported by NSF grant DMS-1811267.

\noindent P. Daskalopoulos and B. Choi have been partially supported by NSF grant DMS-1600658.

\bigskip
\bigskip

\end{document}